\date{}
\newtheorem{theorem}{Theorem}[section]
\newtheorem{lemma}[theorem]{Lemma}
\newtheorem{prop}[theorem]{Proposition}
\newtheorem{corollary}[theorem]{Corollary}
\theoremstyle{definition}
\newtheorem{definition}[theorem]{Definition}
\newtheorem{remark}[theorem]{Remark}
\newcommand{\Z}{\mathbb{Z}}
\newcommand{\Q}{\mathbb{Q}}
\newcommand{\R}{\mathbb{R}}
\newcommand{\CC}{\mathbb{C}}
\newcommand{\HH}{\mathbb{H}}
\newcommand{\calG}{\mathcal{G}}
\newcommand{\Qbar}{\overline{\mathbb{Q}}}
\newcommand{\OO}{\mathrm{O}}
\newcommand{\OOO}{\mathcal{O}}
\newcommand{\Comm}{\mathrm{Comm}}
\newcommand{\PO}{\mathrm{PO}}
\newcommand{\G}{\mathbf{G}}
\newcommand{\I}{\mathbf{i}}
\newcommand{\J}{\mathbf{j}}
\newcommand{\K}{\mathbf{k}}
\DeclareMathOperator*{\rk}{rk}
\DeclareMathOperator*{\Isom}{Isom}
\DeclareMathOperator*{\Stab}{Stab}
\DeclareMathOperator*{\tr}{tr}
\DeclareMathOperator*{\Fix}{Fix}
\newcounter{z}
\pgfmathsetmacro{\minsize}{0.2cm}
\def\@tocline#1#2#3#4#5#6#7{\relax
  \ifnum #1>\c@tocdepth 
  \else
    \par \addpenalty\@secpenalty\addvspace{#2}%
    \begingroup \hyphenpenalty\@M
    \@ifempty{#4}{%
      \@tempdima\csname r@tocindent\number#1\endcsname\relax
    }{%
      \@tempdima#4\relax
    }%
    \parindent\z@ \leftskip#3\relax \advance\leftskip\@tempdima\relax
    \rightskip\@pnumwidth plus4em \parfillskip-\@pnumwidth
    #5\leavevmode\hskip-\@tempdima
      \ifcase #1
       \or\or \hskip 1em \or \hskip 2em \else \hskip 3em \fi%
      #6\nobreak\relax
    \dotfill\hbox to\@pnumwidth{\@tocpagenum{#7}}\par
    \nobreak
    \endgroup
  \fi}
\title{Subspace stabilisers in hyperbolic lattices}
\author[M. Belolipetsky]{Mikhail Belolipetsky}
\address{IMPA, Estrada Dona Castorina 110, 22460-320 Rio de Janeiro, Brazil}
\email[]{mbel@impa.br}
\author[N. Bogachev]{Nikolay Bogachev} 
\address{Department of Computer and Mathematical Sciences, University of Toronto Scarborough, 1095 Military Trail, Toronto, ON M1C 1A3, Canada}
\email{n.bogachev@utoronto.ca}
\author[A. Kolpakov]{Alexander Kolpakov}
\address{CSTEM, University of Austin, Austin, TX 78701, USA}
\email[]{akolpakov@uaustin.org}
\author[L. Slavich]{Leone Slavich}
\address{Dipartimento di Matematica (DIMA), Universit\`a di Genova, Via Dodecaneso 35, 16146 Genova, Italy}
\email[]{leone.slavich@unige.it}
\begin{document}
\begin{abstract}
This paper shows that immersed totally geodesic $m$-dimen\-sional suborbifolds of $n$-dimensional arithmetic hyperbolic orbifolds  correspond to finite subgroups of the commensurator whenever $m \geqslant \frac{n-1}{2}$. We call such totally geodesic suborbifolds finite centraliser subspaces (or  fc-subspaces) and use them to formulate an arithmeticity criterion for hyperbolic lattices. 

We show that a hyperbolic orbifold $M$ is arithmetic if and only if it has infinitely many fc-subspaces, and exhibit examples of non-arithmetic orbifolds that contain non-fc subspaces of codimension one. We provide an algebraic characterization of totally geodesically immersed suborbifolds of arithmetic hyperbolic orbifolds by analysing Vinberg's commensurability invariants. This allows us to construct examples with the property that the adjoint trace field of the geodesic suborbifold properly contains the adjoint trace field of the orbifold. The case of special interest is that of exceptional trialitarian $7$-dimensional orbifolds. We show that every such orbifold contains a totally geodesic arithmetic  hyperbolic $3$-orbifold of exceptional type.

Finally, we study arithmetic properties of orbifolds that descend to their totally geodesic suborbifolds, proving that all suborbifolds in a (quasi-)arithmetic orbifold are (quasi-)arithmetic.
\end{abstract}

\maketitle

\tableofcontents

\section{Introduction}\label{section:intro}

Let $\HH^n$ be the real hyperbolic
$n$-space and $G = \mathbf{PO}_{n,1}(\R) = \Isom(\HH^n)$ be its isometry group.
Here and below a \textit{hyperbolic lattice} is a discrete subgroup of $\Isom(\HH^n)$ having finite covolume with respect to the Haar measure or, equivalently, admitting a fundamental polytope $P\subset \mathbb{H}^n$ of finite volume. In addition, a lattice is called \textit{uniform} if it is cocompact or, equivalently, it admits a compact fundamental polytope. Otherwise, a lattice is called \textit{non-uniform}. 

Given a lattice $\Gamma < G$, the associated quotient space $M = \HH^n/\Gamma$ is a finite volume \textit{hyperbolic orbifold}. It is a manifold when $\Gamma$ is torsion-free. 

In this paper we attempt to give a more particular description of finite volume totally geodesic immersed suborbifolds ({\itshape totally geodesic subspaces}, for short) of hyperbolic orbifolds and manifolds $\HH^n/\Gamma$, with $\Gamma$ being a lattice in $G$. Our special interest lies with the cases of \textit{arithmetic}, \textit{quasi-arithmetic} and \textit{pseudo-arithmetic} lattices \cite{Vin67,EM20}. Their exact definitions, as well as other properties and results that we shall essentially use, will follow in Section~\ref{sec:prelim}.

We shall distinguish three types of lattices that provide an exhaustive description of all arithmetic lattices in $\mathbf{PO}_{n,1}(\R)$. Essentially, \textit{type-I} arithmetic lattices come from admissible quadratic forms, \textit{type-II} arithmetic lattices come from skew-Hermitian forms (or, equivalently, Hermitian forms) over quaternion algebras, and \textit{type-III} arithmetic lattices comprise one exceptional family in $\mathbf{PO}_{3,1}(\R)$ and another in $\mathbf{PO}_{7,1}(\R)$ (the so-called ``trialitarian lattices'', cf. Section~\ref{sec:type3}).

Let $\mathrm{Comm}(\Gamma)$ be the commensurator of $\Gamma$ in $G$, and for $F < \mathrm{Comm}(\Gamma)$ let $\Fix(F) = \{x \in \HH^n\, | \, gx = x, \,\, \forall g \in F\}$ be the fixed point set of $F$ in $\mathbb{H}^n$.

\begin{definition}
An immersed totally geodesic suborbifold $N$ of a hyperbolic orbifold~$M=~\HH^n/\Gamma$ is called a \textit{finite centraliser subspace} (an \textit{fc-subspace} for short) if there exists a finite subgroup $F < \mathrm{Comm}(\Gamma)$ such that $U = \Fix(F)$ and $N = U/\Stab_\Gamma(U)$. 
\end{definition}

The definition of fc-subspaces is motivated by two classes of examples. Let $P$ be a finite-volume hyperbolic Coxeter $n$-polytope, $U \subset \mathbb{H}^n$ a totally geodesic subspace which supports a face $S$ of $P$ and denote by $\Gamma$ the Coxeter group generated by reflections in the facets of $P$. Then $U$ is fixed by a finite subgroup of $\Gamma$ and the quotient of $U$ under the action of its stabiliser in $\Gamma$ has finite covolume whenever $m = \mathrm{dim}(U) \geq 2$ \cite{AMN,Al13}. 
In a similar fashion, one can construct further examples of totally geodesic subspaces by considering the fixed-point set $U$ of a finite group $F$ of symmetries of a hyperbolic orbifold $M=\mathbb{H}^n/\Gamma$. In this setting the group $F$ is a subgroup of the normaliser of $\Gamma$ in $\mathrm{Isom}(\mathbb{H}^n)$, and $U$ projects to an fc-subspace $N \subset M$ under the action of its stabiliser in $\Gamma$.

The following result is akin to a well-known fact for uniform lattices in Lie groups: see Lemma~4.4 in \cite{OV} or combine Theorem~1.13 with Lemma~1.14 in \cite{Raghunathan}: 
\begin{theorem}\label{theorem:cent}
Let $\Gamma < \Isom(\HH^n)$ be a (uniform) lattice and $F < \Isom(\HH^n)$ be a finite subgroup, such that $U = \mathrm{Fix}(F)$ is an $m$-dimensional subsapce in $\HH^n$, with $m\geq 2$. If $F < \mathrm{Comm}(\Gamma)$, then the stabiliser $\mathrm{Stab}_{\Gamma}(U)$ of $U$ in $\Gamma$ is a (uniform) lattice acting on $U$. 
\end{theorem}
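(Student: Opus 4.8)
The plan is to reduce to a setting where $F$ itself lies inside a lattice, prove a local finiteness property of the $\Lambda$-orbit of $H$, and then cover $H$ by pieces of a fundamental domain — directly when $\Lambda$ is cocompact, and after a thick–thin decomposition in general, the cuspidal ends requiring one extra arithmetic input.

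First I would make three reductions. (i) We may assume $m<n$, since otherwise $F$ fixes $\HH^n$ pointwise and the claim is trivial. (ii) Replacing $\Gamma$ by $\Gamma_1:=\bigcap_{f\in F}f\Gamma f^{-1}$ — a lattice commensurable with $\Gamma$ (a finite intersection of subgroups commensurable with $\Gamma$) which is normalised by $F$ since $fF=F$ — the group $\Lambda:=\Gamma_1 F$ is a lattice containing $F$: it is discrete, being a finite union of translates of the closed discrete set $\Gamma_1$, and of finite covolume since $[\Lambda:\Gamma_1]\le|F|$; moreover $\Lambda$ is uniform iff $\Gamma$ is. As $\Stab_\Gamma(H)$, $\Stab_{\Gamma_1}(H)$ and $\Stab_\Lambda(H)$ are pairwise commensurable, it suffices to prove the theorem with $\Lambda$ in place of $\Gamma$. (iii) The kernel of the $\Stab_\Lambda(H)$-action on $H$ is $\Lambda\cap\OO(n-m)$, a finite group, and its image in $\Isom(H)$ is discrete (the restriction map $\Stab_G(H)\to\Isom(H)$ is proper); so what remains is to show that $H/\Stab_\Lambda(H)$ has finite volume, and is compact when $\Lambda$ is uniform.

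The next step is local finiteness of the family $\{\gamma H\}_{\gamma\in\Lambda}$, which rests on the identity $\gamma H=\Fix(\gamma F\gamma^{-1})$: if a compact $C\subset\HH^n$ meets $\gamma H$, then $\gamma F\gamma^{-1}$ fixes a point of $C$, hence lies in the finite set $\{\delta\in\Lambda:\delta C\cap C\neq\emptyset\}$, so only finitely many planes $\gamma H$ meet $C$; consequently $\bigcup_\gamma\gamma H$ is closed and $\Lambda$-invariant. When $\Lambda$ is cocompact, fix a compact fundamental domain $D$; only finitely many planes $\gamma_1 H,\dots,\gamma_k H$ meet $D$, and $\gamma D\cap H\neq\emptyset$ forces $\gamma\in\Stab_\Lambda(H)\gamma_i^{-1}$ for some $i$; since $\bigcup_\gamma\gamma D=\HH^n$, a short computation gives $H=\Stab_\Lambda(H)\cdot K_0$ with $K_0=\bigcup_i(H\cap\gamma_i^{-1}\overline D)$ compact, so $\Stab_\Lambda(H)$ acts cocompactly on $H$.

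For a non-uniform lattice $\Lambda$ I would use the thick–thin decomposition $\HH^n=\Omega\sqcup\bigsqcup_j B_j$, where the $B_j$ form a $\Lambda$-invariant family of disjoint open horoballs based at the parabolic points of $\Lambda$ and $\Lambda$ acts cocompactly on the closed set $\Omega$. Running the covering argument above with $\Omega$ in place of $\HH^n$ shows $(\Omega\cap H)/\Stab_\Lambda(H)$ is compact. One then analyses the ends: a horoball $B_j$ based at $\xi\notin\partial_\infty H$ meets $H$ in a bounded set, while if $\xi\in\partial_\infty H$ then $F$ — fixing $H$ pointwise, hence $\xi$ — lies in the crystallographic stabiliser $P=\Stab_\Lambda(\xi)$, with finite-index translation lattice $T\cong\Z^{n-1}$. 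Because $F$ normalises $T$, the $F$-fixed direction $\vec V\subset T\otimes\R$ is $\Q$-rational with respect to $T$ (it is the image of the idempotent $\tfrac1{|F|}\sum_{f\in F}f$, which is integral in a $\Z$-basis of $T$); hence $T\cap\vec V$ is cocompact in $\vec V$, and since translations along $\vec V$ preserve $H$, the group $\Stab_\Lambda(H)\cap P$ acts cocompactly on the cross-section $E_\xi\cap H$. There are only finitely many such cusps, since their bounding horospheres lie in the $\Stab_\Lambda(H)$-cocompact set $\Omega\cap H$. Thus $H/\Stab_\Lambda(H)$ is a compact core with finitely many finite-volume cusps, so it has finite volume, and is compact exactly when $\Lambda$ is uniform. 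I expect the cusp analysis to be the crux — in particular the $\Q$-rationality of the $F$-fixed subspace of the cusp lattice, together with the bookkeeping needed to see that the cuspidal pieces and the compact core assemble into a genuine finite-volume orbifold; the reductions and the covering argument should be routine.
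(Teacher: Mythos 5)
Your argument is correct and follows the same skeleton as the paper's up to the reduction step: your $\Lambda=\Gamma_1 F$ is exactly the paper's $\Gamma''=\langle \Gamma',F\rangle$ with $\Gamma'=\bigcap_{g\in F}g\Gamma g^{-1}$. After that the routes diverge. The paper quotes \cite[Lemma 4.4]{OV} to get properness of the map $H/C_{\Gamma''}(F)\to\HH^n/\Gamma''$, observes that $C_{\Gamma''}(F)$ has finite index in $N_{\Gamma''}(F)=\Stab_{\Gamma''}(H)$ because $F$ is finite, and then concludes finite volume of the properly immersed suborbifold from finiteness of $\mathrm{vol}(M)$ (this is where the hypothesis $m\geq 2$ enters, the $1$--dimensional counterexample being an infinite edge of an ideal Coxeter polytope); commensurability then transfers the statement back to $\Gamma$, and uniformity is handled via absence of parabolics. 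You instead prove the key properness/local-finiteness statement from scratch, via $\gamma H=\Fix(\gamma F\gamma^{-1})$ and proper discontinuity — which is essentially the content of the cited lemma — and then you make explicit the step the paper leaves implicit (``cusps of $S$ correspond to cusps of $M$''): the thick--thin decomposition, and in each cusp the $\Q$--rationality of the $F$--fixed direction in the translation lattice $T$ of the cusp group, which is precisely where $F<\Comm(\Gamma)$ is used to force compact cusp cross-sections of $H/\Stab_\Lambda(H)$. So your proof is more self-contained and isolates the arithmetic input at the cusps, at the cost of length; the paper's is shorter by outsourcing properness and by asserting the properness-implies-finite-volume step. Two cosmetic points: the averaging operator $\frac{1}{|F|}\sum_{f\in F}f$ is rational (not integral) in a $\Z$--basis of $T$, which is all you need for $T\cap \vec V$ to be cocompact in $\vec V$; and the finiteness bookkeeping for the cusp and bounded horoball pieces, which you flag as remaining, follows from local finiteness of the $\Lambda$--invariant horoball family together with compactness of $(\Omega\cap H)/\Stab_\Lambda(H)$, so there is no genuine gap.
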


We shall assume throughout that the dimension of an fc-subspace is positive. Note that we do not exclude one-dimensional fc-subspaces. 
The central result of this paper is the following theorem.

\begin{theorem}\label{theorem-fc}
Let $M = \HH^n/\Gamma$ be a finite-volume hyperbolic $n$-orbifold. We have:
\begin{itemize}
    \item[(1)] If $M$ is arithmetic, then it contains infinitely many fc-subspaces of positive dimension. Moreover, all totally geodesic suborbifolds of $M$ of dimension $m \geqslant \frac{n-1}{2}$ which are not $3$-dimensional type III are fc-subspaces.
    \item[(2)] If $M$ is non-arithmetic, then it has finitely many fc-subspaces, their number being bounded above by $c\cdot\mathrm{vol}(M)$, with a positive constant $c = \mathrm{const}(n)$ depending only on $n$.
\end{itemize}
\end{theorem}
\noindent Notice that the condition that the totally geodesic subspace is not $3$-dimen\-sional type III in (1) is only needed when $4\leq n\leq 7$. We can actually show that the condition is only needed for $6\leq n \leq 7$. Indeed, the algebraic properties of $3$-dimensional type III lattices imply that the corresponding orbifolds can only be geodesically immersed as suborbifolds of codimension $\geq 3$.

Let us put this theorem in a more general perspective. Parts (1) and (2) of the theorem show a dichotomy for the fc-subspaces of arithmetic and non-arithmetic hyperbolic orbifolds. In fundamental recent works by Margulis and Mohammadi \cite{MM20} (for dimension $n = 3$ only) and independently by Bader, Fisher, Miller, and Stover \cite{BFMS20}, the arithmeticity of hyperbolic manifolds is established in terms of the existence of infinitely many maximal totally geodesic subspaces of dimension at least $2$ (thereby excluding $1$-dimensional geodesics). These results provide a sufficient condition for the arithmeticity of hyperbolic orbifolds, but do not detect arithmetic hyperbolic $2$-orbifolds (obviously) and $3$-dimensional orbifolds of type II and III, as they contain no totally geodesic immersed $2$-dimensional orbifolds. In the $3$-dimensional case, we provide an alternative proof of a result of Lackenby--Long--Reid \cite[Proof of Theorem 1.2]{LLR08} (see also \cite[Lemma 2.1]{CLR}). This allows us to detect the arithmeticity of these orbifolds by exploiting the existence of infinitely many fc–geodesics:

\begin{corollary}
Let $M$ be a finite-volume hyperbolic $3$-orbifold. Then $M$ is arithmetic if and only if $M$ contains infinitely many fc-geodesics. Moreover, in the arithmetic case, all immersed totally geodesic surfaces and curves are fc-subspaces.
\end{corollary}

In this regard, part (1) of Theorem \ref{theorem-fc} can be also viewed as a generalisation to arbitrary dimensions of the aforementioned result of Lackenby--Long--Reid.
Our methods also allow to detect arithmeticity of hyperbolic surfaces: 
\begin{corollary}
A finite area hyperbolic surface is arithmetic if and only if all of its infinitely many closed geodesics are fc-subspaces.
\end{corollary}
In proving Theorem \ref{theorem-fc}, we construct fc-subspaces of dimension $\geq 3$ in all arithmetic hyperbolic orbifolds of dimension $\geq 4$. The most interesting case is that of type-III lattices in $\mathrm{PO}_{7,1}$ for which we are able to prove the following property:

\begin{theorem}\label{teo:3-dim-type-III-in-7-dim-type-III}
    Every $7$-dimensional type-III orbifold $M$ contains a $3$-dimen\-sional type-III totally geodesic fc-subspace.
\end{theorem}

This theorem implies that the main result in \cite{BFMS20} becomes an arithmeticity criterion detecting all arithmetic orbifolds of dimension $\geq 4$:

\begin{corollary}
Let $M = \HH^n/\Gamma$ be a finite-volume hyperbolic $n$-orbifold, with $n\geq 4$. Then $M$ is arithmetic if and only if it contains infinitely many maximal immersed totally geodesic subspaces of dimension $\geq 3$. If this is the case then $M$ contains infinitely many $3$-dimensional totally geodesic fc-subspaces.
\end{corollary}

The proofs in \cite{BFMS20} and \cite{MM20} are based on new powerful superrigidity theorems. In this regard our work only relies on the classical Margulis superrigidity that appears in his proof of the arithmeticity theorem \cite[Chapter~IX]{Margulis-book}. On the other hand, we require a much more detailed analysis of the algebraic structure of arithmetic subgroups. An advantage of fc-subspaces compared to the general totally geodesic subspaces is that they are more concrete and amenable for constructive arguments.

In his study of lattices in $\mathbf{PO}_{n,1}(\mathbb{R})$, Vinberg introduced two commensurability invariants: the {\itshape adjoint trace field} $k$, which is an algebraic number field, and the {\itshape ambient group}, an algebraic $k$-group  $\mathbf{G}$ whose identity component $\mathbf{G}^{\circ}$ is a $k$-form of the real group $\mathbf{PO}_{n,1}$ if $n$ is even, or $\mathbf{PSO}_{n,1}$ if $n$ is odd (see Section \ref{sec:types-arithmetic-lattices}). We analyse the relation between the adjoint trace field of a hyperbolic orbifold $M=\mathbb{H}^n/\Gamma$ (i.e.\ the adjoint trace field of the lattice $\Gamma$) and the adjoint trace field of a totally geodesic immersed suborbifold $N=\mathbb{H}^m/\Lambda$, proving that (quasi-)arithmeticity is inherited by totally geodesic suborbifolds:

\begin{theorem}\label{theorem:geod}
Let $M$ be a quasi-arithmetic hyperbolic orbifold with adjoint trace field $k$, and $N \subset M$ be a finite-volume totally geodesic suborbifold of dimension $m \ge 2$ with adjoint trace field $K$. Then $N$ is hyperbolic and quasi-arithmetic. If $N$ is type-I or type-II, the field inclusion $k\subset K$ holds true. Moreover, if $M$ is arithmetic, then $N$ is arithmetic as well. 
\end{theorem}

\noindent The case where $M$ and $N$ are arithmetic and the totally geodesic subspace $N$ is $3$-dimensional type III is slightly more intricate, but we remark here that there is still a field inclusion of the form $k\subset L$, where $L$ is taken to be the invariant trace field of $N$. This is the analogue of the adjoint trace field, but computed using the representation of $\Lambda\cong \pi_1(N)$ as a subgroup of $\mathrm{PSL}_2(\mathbb{C})$. We refer the reader to Section \ref{sec:hereditary properties} for the precise statements in this case.

The inclusion of trace fields expressed by Theorem \ref{theorem:geod} is slightly counter-intuitive. If $M=\mathbb{H}^n/\Gamma$ and $U\cong \mathbb{H}^m$ is a lift of $N$ to $\mathbb{H}^n$, then the stabiliser $\mathrm{Stab}_{\Gamma}(U)$ of $U$ in $\Gamma$ is a subgroup of $\Gamma$ and the field generated by the traces of the adjoint action of $\mathrm{Stab}_{\Gamma}(U)$ on the Lie algebra of $\mathbf{O}_{n,1}(\mathbb{R})$ is a subfield of the adjoint trace field $k$ of $M$. However, in order to compute the adjoint trace field $K$ of $N$ one has to factor out the action of $\mathrm{Stab}_{\Gamma}(U)$ on the orthogonal bundle of the subspace $U$. 

In other words, the stabiliser of $U$ in $\mathbf{O}_{n,1}(\mathbb{R})$ is isomorphic to  the product $\mathbf{O}_{m,1}(\mathbb{R}) \times \mathbf{O}_{n-m}(\mathbb{R})$, and the adjoint trace field $K$ of $N$ is obtained by extending $\mathbb{Q}$ with the traces of the projection to the Lie algebra of $\mathbf{O}_{m,1}(\mathbb{R})$ of the adjoint action of $\mathrm{Stab}_{\Gamma}(U)$. The proper inclusion of trace fields $k \subset K$ then follows from Borel's Density Theorem and the analysis of the algebraic properties of the projection map $\mathrm{Stab}_{\Gamma}(U)\rightarrow \mathbf{O}_{m,1}(\mathbb{R})$.

There are several situations in which $k=K$, i.e.\ the trace field of $M$ coincides with that of the totally geodesic subspace $N$. For instance, this happens whenever $M$ is quasi-arithmetic and $N$ has codimension one.
In the case of a quasi-arithmetic \textit{reflection} group, Coxeter faces of the corresponding fundamental polytope are quasi-arithmetic over the same field of definition \cite{BK20}. 

The work of Emery and Mila \cite{EM20} shows that any Gromov--Piatetski-Shapiro manifold contains a totally geodesic subspace with a smaller adjoint trace field. Combining this with Theorem~\ref{theorem:geod} we obtain an alternative proof of the non-arithmeticity of these manifolds (see Remark \ref{rem:GPS-smaller-trace-field}). This argument for verifying non-arithmeticity may apply to other locally symmetric spaces as well.  

The fact that the adjoint trace field of a geodesic submanifold $N$ can be larger than the one of the arithmetic ambient manifold $M$ appears to be a previously unknown phenomenon and has some profound consequences which we explore thoroughly in the rest of this paper.

Following Theorem~\ref{theorem:geod}, we define a totally geodesic subspace $N$ of an arithmetic hyperbolic orbifold $M$ to be a {\itshape subform subspace} if its adjoint trace field coincides with that of $M$, and further refine the analysis of Vinberg's commensurability invariants in the arithmetic case by proving the following:

\begin{theorem}\label{teo:embeddings-hyperbolic-orbifolds}
Let $N=\mathbb{H}^m/\Lambda$ be a totally geodesic subspace of an arithmetic hyperbolic orbifold $M=\mathbb{H}^n/\Gamma$. Suppose that  $N$ is not a $3$-dimensional type-III orbifold and that $[K:k]=d\geq 1$, where $K$ (resp.\ $k$) denotes the adjoint trace field of $\Lambda$ (resp. $\Gamma$). Then there exists a unique minimal subform subspace $S\subseteq M$ of dimension $(m+1)\cdot d-1$ such that $N \subseteq S$, and there is no proper subform subspace of $S$ which contains $N$.
\end{theorem}

\noindent Theorem \ref{teo:embeddings-hyperbolic-orbifolds} does not extend naturally to the case where the totally geodesic subspace $N$ is $3$-dimensional type III.  We refer the reader to the end of Section \ref{sec:hereditary properties} for a more in-depth discussion of this particular case.

In order to prove Theorems \ref{theorem:geod} and \ref{teo:embeddings-hyperbolic-orbifolds} we analyse the relation between the ambient group $\mathbf{G}$ of an arithmetic orbifold $M$ and the ambient group $\mathbf{L}$ of the totally geodesic suborbifold $N\subset M$. Denote by $U$ a lift of $N$ to the universal cover $\mathbb{H}^n$ of $M$. The group $\mathbf{G}$ contains a closed, admissible, $k$-defined subgroup $\mathbf{H}$ such that $\mathbf{H}(\mathbb{R})<\mathrm{Stab}_{\mathbf{G}(\R)}(U)$, and $\mathbf{H}$ is $k$-isogenous to a product $\mathbf{C}\times \mathrm{Res}_{K/k}(\mathbf{L})$, where $\mathbf{C}$ is a $k$-group such that $\mathbf{C}(\mathbb{R})$ is compact and $\mathrm{Res}_{K/k}(\mathbf{L})$ denotes the Weil restriction from $K$ to $k$ of the group $\mathbf{L}$.

This fact has as a consequence that a totally geodesic immersion $N \subseteq M$ of arithmetic hyperbolic orbifolds is a composition of two totally geodesic immersions $N \subseteq S \subseteq M$. The immersion $N \subseteq S$ is determined by the factor $\mathrm{Res}_{K/k}(\mathbf{L})$, and $N$ is a {\itshape Weil restriction subspace} of $S$, while the immersion of $S$ into $M$ as a subform subspace depends on the compact factor $\mathbf{C}$. We provide a description of totally geodesic immersions of arithmetic hyperbolic orbifolds obtained as either Weil restriction subspaces or subform subspaces in Section \ref{sec:kinds-of-subspaces}.
In combination with Theorem \ref{teo:embeddings-hyperbolic-orbifolds}, this gives a complete classification of geodesic immersions between non-exceptional arithmetic hyperbolic orbifolds.

The work of Bergeron and Clozel \cite{BC17} actually implies that $7$-dimensional type-III orbifolds do not admit totally geodesic immersions into higher dimensional arithmetic hyperbolic orbifolds: if such a space were totally geode\-si\-cally immersed in a type-I or type-II space, then by passing to a sufficiently large congruence cover one would create a non-zero first homology class in the covering manifold (cf. \cite[Corollary~1.8]{BC13}), and by restriction (and injectivity of the stable restriction map in cohomology) one would contradict the main result of \cite{BC17}.
Theorem \ref{teo:3-dim-type-III-in-7-dim-type-III} shows that certain $3$-dimensional type-III orbifolds immerse in $7$-dimensional type-III spaces. 
The question remaining open is if there exist any other immersions between arithmetic hyperbolic orbifolds which involves type-III spaces.

It is worth stressing the fact that all codimension-$1$ totally geodesic suborbifolds in an arithmetic $n$-orbifold  are fc-subspaces. In Section \ref{sec:non-fc-arithmetic} we show how to build examples of non-fc subspaces (of high codimension) in arithmetic hyperbolic orbifolds. For non-arithmetic lattices we can give examples of totally geodesic subspaces of codimension $1$ which are not fc-subspaces. Note that by \cite{BFMS20} the number of such subspaces is always finite, although we do not have an effective upper bound for their number.

\begin{theorem}\label{teo:existence-not-fc-2}
Any non-arithmetic Gromov--Piatetski-Shapiro hyperbolic manifold contains a non-fc codimension-$1$ totally geodesic subspace.
\end{theorem}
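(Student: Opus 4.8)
The plan is to recall the Gromov--Piatetski-Shapiro (GPS) construction and exhibit the explicit totally geodesic hypersurface that separates the two pieces, then show directly that it cannot be cut out by a finite subgroup of the commensurator. Recall that a GPS manifold $M$ is obtained by gluing two finite-volume hyperbolic $n$-manifolds-with-totally-geodesic-boundary, $M_1$ and $M_2$, each cut out of an \emph{arithmetic} hyperbolic $n$-manifold, along isometric boundary components $V = \partial M_1 \cong \partial M_2$, where $M_1$ and $M_2$ are chosen to be non-commensurable (this non-commensurability is exactly what forces $M$ to be non-arithmetic, by Margulis). The glued-up hypersurface $V \subset M$ is a codimension $1$ totally geodesic immersed subspace; write $V = H/\Stab_\Gamma(H)$ for a hyperplane $H \subset \HH^n$ with $\Gamma = \pi_1(M) < \Isom(\HH^n)$.

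First I would argue by contradiction: suppose $V$ is an fc--subspace, so there is a finite subgroup $F < \Comm(\Gamma)$ with $H = \Fix(F)$. Since $H$ is a hyperplane, $F$ must be (conjugate into) the order-two group generated by the reflection $r_H$ in $H$; in any case $F$ centralises the reflection $r_H$ and $r_H \in \Comm(\Gamma)$. The key point is that $r_H$ normalises $\Stab_\Gamma(H)$ and conjugates $\Gamma$ to a commensurable lattice $r_H \Gamma r_H$. Now I would use the geometry of the GPS decomposition: the hyperplane $H$ bounds, on its two sides inside $M$ (equivalently, among the components of $\HH^n \setminus H$ modulo $\Stab_\Gamma(H)$), the two pieces $M_1$ and $M_2$. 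The reflection $r_H$ swaps the two sides of $H$, hence the commensurability class of $\Gamma$ would have to be invariant under an involution exchanging a neighbourhood looking like (a cover of) $M_1$ with one looking like (a cover of) $M_2$.

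To turn this into a genuine contradiction, the cleanest route is to pass to commensurators of the \emph{pieces}: $\Stab_\Gamma(H)$ is a lattice in $\Isom(H) \cong \Isom(\HH^{n-1})$ (finite covolume since $V$ is a closed or finite-volume hypersurface, which it is by construction), and both $\pi_1(M_i)$ contain a finite-index subgroup of $\Stab_\Gamma(H)$ as the fundamental group of $\partial M_i$. The subgroup $\Stab_\Gamma(H) < \Isom(H)$ is arithmetic: it is commensurable with the lattice defining the arithmetic manifold containing $M_1$, restricted to $V$. If $r_H \in \Comm(\Gamma)$, then conjugation by $r_H$ sends the "$M_1$-side germ" of $\Gamma$ along $H$ to the "$M_2$-side germ", which forces the arithmetic lattices underlying $M_1$ and $M_2$ to become commensurable in $\Isom(\HH^n)$ — contradicting the defining property of the GPS construction that $M_1$ and $M_2$ are incommensurable. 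Hence no such $F$ exists and $V$ is not fc. (That $V$ is maximal among totally geodesic subspaces, if one wants that addendum, follows because any larger totally geodesic subspace would have to be all of $M$, $M$ having no totally geodesic hypersurface strictly containing $V$ by the cut-and-paste structure; alternatively it is already $(n-1)$-dimensional, hence maximal unless it extends to $M$ itself.)

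The main obstacle is making the phrase "$r_H$ conjugates the $M_1$-germ to the $M_2$-germ" into a precise statement about lattices rather than about manifolds-with-boundary — i.e.\ extracting from a commensurator element that swaps the sides of $H$ an actual commensurability between the two ambient arithmetic groups $\Gamma_1, \Gamma_2$ with $M_i = M_i^{\mathrm{arith}} \cap (\text{halfspace})$. The way I expect to handle this: a commensurator element $g = r_H$ with $\Fix(r_H) = H$ and $[\Gamma : \Gamma \cap g\Gamma g^{-1}] < \infty$ restricts to a commensurator element of $\Stab_\Gamma(H)$ inside $\Isom(H)$; but more is true — the "thickening" of $\Gamma$ near $H$ on each side is, up to finite index, the restriction to a collar of the corresponding $\Gamma_i$, and $\Comm(\Gamma) \supseteq \Comm(\Gamma_1) \cap \Comm(\Gamma_2)$-type considerations plus the fact that an arithmetic lattice equals its own commensurator's ... no: rather, one uses that $\Gamma_i$ is the unique-up-to-commensurability arithmetic lattice "containing the $M_i$ side", so an isometry of $\HH^n$ carrying one side's $\Gamma$-structure to the other's carries $\Gamma_1$ to something commensurable with $\Gamma_2$. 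This uniqueness of the arithmetic completion of a piece with totally geodesic boundary is the technical heart and is where I would invest the detailed argument, likely citing the original GPS paper and the standard fact that a Zariski-dense subgroup determines its arithmetic commensurability class.
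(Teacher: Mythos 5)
Your setup and first reduction are sound and match the paper's strategy: the gluing hypersurface $H$ projects to a codimension $1$ totally geodesic subspace, and since the pointwise stabiliser of a hyperplane in $\Isom(\HH^n)$ is just $\{\mathrm{id}, r_H\}$, fc-ness of that subspace is equivalent to $r_H \in \Comm(\Gamma)$; the contradiction must then be with the incommensurability of the two arithmetic building blocks $\Gamma_1,\Gamma_2$. However, the implication you need --- that $r_H \in \Comm(\Gamma)$ forces $\Gamma_1 \sim \Gamma_2$ --- is exactly the step you leave as a sketch (``germs'', ``thickenings'', ``uniqueness of the arithmetic completion''), and the tools you point to do not carry it by themselves. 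The fact that a Zariski-dense subgroup determines the commensurability class of an arithmetic lattice containing it lets you compare two \emph{arithmetic} lattices that share a Zariski-dense subgroup; but the hypothesis $r_H\in\Comm(\Gamma)$ produces no ambient arithmetic lattice and no common Zariski-dense subgroup of $\Gamma_1$ and $\Gamma_2$, so there is nothing yet to feed into that principle ($\Gamma$ itself is non-arithmetic). Moreover the heuristic ``$r_H$ swaps the $M_1$-germ with the $M_2$-germ'' is inexact: in the GPS construction $r_H$ is the common reflection $r$ lying in \emph{both} $\Gamma_1$ and $\Gamma_2$, so conjugation by it preserves each side's arithmetic structure; what the commensuration hypothesis really says is that $r\Gamma r^{-1}$ and $\Gamma$ share a finite-index subgroup, and one must extract the contradiction from that.

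The paper closes this gap by an explicit group-theoretic construction rather than by an appeal to uniqueness of arithmetic completions. Writing $\Gamma=\langle \Delta_1,\Delta_2\rangle=\Delta_1 *_{\Gamma_0}\Delta_2$ with $\Gamma_i=N_i\rtimes\Delta_i$ ($N_i$ the normal closure of $r$ in $\Gamma_i$), one sets $\Gamma'=r\Gamma r^{-1}\cap\Gamma$ and $\Gamma''=\langle\Gamma',r\rangle$; since $r$ normalises $\Gamma'$, the group $\Gamma''$ is a lattice commensurable with $\Gamma$ which \emph{contains} $r$. Intersecting gives finite-index subgroups $\Delta_i'=\Delta_i\cap\Gamma'$ and, using the normal closure of $r$ in $\Gamma''$, one reconstructs inside $\Gamma''$ groups $\Gamma_i'=N'\rtimes\Delta_i'$ that are finite-index subgroups of $\Gamma_i$ and at the same time sublattices (hence finite-index subgroups) of $\Gamma''$. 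This yields $\Gamma_1\sim\Gamma''\sim\Gamma_2$, contradicting the incommensurability built into the GPS construction. So your proposal identifies the right statement to contradict but is missing the actual mechanism; to complete it you would need either this reconstruction of finite-index pieces of $\Gamma_1$ and $\Gamma_2$ inside a single lattice commensurable with $\Gamma$, or some equally concrete substitute --- the cited ``standard facts'' alone do not produce the required commensurability.
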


The following fact follows from the recent results of Le--Palmer \cite{LP20} and some previous work of Reid--Walsh \cite{RW}.

\begin{theorem}\label{teo:existence-not-fc-twist-knots}
There exists a sequence of non-arithmetic hyperbolic $3$-manifolds $M_2,M_3, \ldots$ such that each $M_k$ contains exactly $k$ totally geodesic immersed surfaces and all of them are non-fc. 
\end{theorem}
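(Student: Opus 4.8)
The plan is to combine three ingredients: (i) the result of Le--Palmer \cite{LP20} producing, for each $k$, a non-arithmetic hyperbolic $3$-manifold with exactly $k$ immersed totally geodesic surfaces; (ii) the work of Reid--Walsh \cite{RW} which, in the relevant family (twist-knot complements or closely related manifolds), identifies these totally geodesic surfaces concretely — in particular showing that a twist-knot complement is non-arithmetic and contains a prescribed small number of closed or cusped totally geodesic surfaces; and (iii) part~(3) of Theorem~\ref{theorem-fc} together with Theorem~\ref{theorem:geod}, which constrain what an fc-subspace must look like. First I would invoke \cite{LP20}/\cite{RW} to fix a sequence $M_k$ with exactly $k$ immersed totally geodesic surfaces, and then argue that \emph{none} of these $k$ surfaces is an fc-subspace.

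To show non-fc-ness I would argue by contradiction in two different ways, depending on what structural information on the surfaces is available. The clean argument, if a surface $N \subset M_k$ were an fc-subspace, is to use Theorem~\ref{theorem:cent}: $N = H/\Stab_\Gamma(H)$ would be a finite-volume hyperbolic $2$-orbifold cut out by a finite subgroup $F < \mathrm{Comm}(\Gamma)$ with $H = \mathrm{Fix}(F)$; in particular $\Stab_\Gamma(H)$ is a lattice in $\Isom(\HH^2) = \PO_{2,1}(\R)$, and its commensurator contains $F$-conjugates that force arithmetic rigidity at the level of the surface group. The key point is that for a non-arithmetic lattice the number of fc-subspaces is bounded by $c\cdot\mathrm{vol}(M_k)$ by Theorem~\ref{theorem-fc}(3), and — more importantly for controlling \emph{which} surfaces are fc — an fc-subspace, being the fixed set of a finite subgroup of $\mathrm{Comm}(\Gamma)$, would be stable under a commensurability symmetry of $M_k$; one then checks that the explicit surfaces coming from \cite{RW} are \emph{not} invariant under any such symmetry (for instance because the invariant trace field of the surface group equals that of $M_k$, which would be incompatible with the non-arithmeticity data recorded in \cite{RW}, or because a hidden symmetry fixing the surface would enlarge the commensurator beyond what \cite{RW} computes).

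Concretely, I would extract from \cite{RW} the invariant trace field $k_{M_k}$ and the fact that $M_k$ has no hidden symmetries (equivalently $\mathrm{Comm}(\Gamma_k)$ is not strictly larger, up to finite index, than $N_G(\Gamma_k)$ — a standard feature of twist-knot complements). If $N$ were an fc-subspace, the finite group $F$ fixing $H$ would give a nontrivial element of $\mathrm{Comm}(\Gamma_k)$ acting on $M_k$ but not lying in (a finite-index subgroup of) $\Gamma_k$ itself — that is, a hidden symmetry — contradicting the Reid--Walsh computation. Running this for every one of the $k$ totally geodesic surfaces simultaneously yields that all $k$ of them are non-fc, which is the assertion.

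The main obstacle I anticipate is purely bookkeeping from the cited literature: one must make sure that the manifolds produced by \cite{LP20} coincide with, or can be replaced by, the twist-knot-type examples of \cite{RW} for which the non-existence of hidden symmetries and the exact census of totally geodesic surfaces are both known; and one must handle the borderline case of a $1$-dimensional ``surface'' (a geodesic) versus genuine $2$-dimensional surfaces, though here the statement is about surfaces so the fc-criterion is applied in dimension $2$ where Theorem~\ref{theorem:cent} applies cleanly. If a direct hidden-symmetry obstruction is unavailable for some $M_k$, the fallback is the quantitative bound of Theorem~\ref{theorem-fc}(3) combined with Theorem~\ref{theorem:geod}: an fc-surface in a non-arithmetic $M_k$ would have to be quasi- or pseudo-arithmetic over a controlled field, and one rules this out from the trace-field data of the explicit surfaces in \cite{RW}.
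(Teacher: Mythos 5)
Your overall strategy (Le--Palmer for the census of totally geodesic surfaces, Reid--Walsh's ``no hidden symmetries'' theorem for non-arithmetic $2$-bridge knot complements, then a contradiction if one of the surfaces were fc) is the same skeleton as the paper's proof, but your key step contains a genuine gap. You argue that an fc-surface would produce ``a nontrivial element of $\mathrm{Comm}(\Gamma_k)$ acting on $M_k$ but not lying in (a finite-index subgroup of) $\Gamma_k$ itself --- that is, a hidden symmetry --- contradicting Reid--Walsh.'' This is not what a hidden symmetry is, and it is not a contradiction: Reid--Walsh show that $\mathrm{Comm}(\Gamma)$ equals the \emph{normaliser} of $\Gamma$ in $\Isom(\HH^3)$, not that it equals $\Gamma$. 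Twist knot complements have a nontrivial (genuine) symmetry group, so the commensurator is strictly larger than the knot group, and the mere existence of a finite subgroup $F<\mathrm{Comm}(\Gamma)$ outside $\Gamma$ contradicts nothing. What Reid--Walsh buys you is only a reduction: any element of $F$ must descend to an honest symmetry of the knot complement $N_j$. The missing idea is the step that finishes the argument from there. The only nontrivial isometry of $\HH^3$ fixing a $2$-plane $H$ pointwise is the reflection in $H$, which is orientation-reversing; hence an fc-surface would force an orientation-reversing symmetry of $N_j$ fixing the thrice-punctured sphere. The paper then kills this by chirality: the Jones polynomial of the twist knot $K_j$ ($j$ odd) satisfies $V(q)\neq V(q^{-1})$, so $N_j$ admits no orientation-reversing symmetries at all. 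Nothing in your proposal plays this role.

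Your fallback arguments do not repair the gap. Theorem~\ref{theorem-fc}(3) only bounds the \emph{number} of fc-subspaces of a non-arithmetic orbifold by $c\cdot\mathrm{vol}$; it does not force that number to be zero, so it cannot show a specific surface is non-fc. Likewise, Theorem~\ref{theorem:geod} tells you an fc-surface would be (quasi-/pseudo-)arithmetic over a controlled field, but being fc is not an arithmeticity obstruction for the surface --- non-arithmetic manifolds (e.g.\ Gromov--Piatetski-Shapiro hybrids) do contain fc-subspaces --- so trace-field bookkeeping of the surface group cannot rule out fc-ness either. Finally, note that the census of totally geodesic surfaces and the passage to the $k$-sheeted covers $M_k$ are from Le--Palmer, not Reid--Walsh; the non-fc property transfers to the $k$ lifts in $M_k$ simply because commensurable lattices share the same commensurator, a point worth stating explicitly.
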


In the last section, we apply the techniques introduced here to build some interesting examples of immersed totally geodesic subspaces of hyperbolic orbifolds. 
We begin by constructing an explicit example of a type-I Weil-restriction subspace in a type-I arithmetic orbifold (Section \ref{sec:invol-quadr-extension}). We then build an example of a type-I Weil-restriction subspace in a type-II arithmetic orbifold (Section \ref{sec:type-I-in-type-II}). 
Finally, we study totally geodesic sublattices of a particular non-arithmetic lattice generated by reflections in the facets of a non-compact Coxeter $5$-simplex (Section \ref{sec:examples}). We show that this example contains a $2$-dimensional arithmetic fc-subspace, arising as the fixed point set of its (unique) non-trivial symmetry.

The notion of fc-subspaces that we define and study in this paper for real hyperbolic orbifolds applies to the other locally symmetric spaces as well. In particular, it might be interesting to consider these subspaces in the complex hyperbolic case. For instance, in a recent paper \cite{D23}, Deraux carried out a detailed analysis of a set of examples of fc-subspaces of complex hyperbolic triangle orbifolds. As we observed in the real hyperbolic case, a systematic study of these subspaces may reveal new unexpected phenomena.

\subsection*{Structure of the paper}
In Section~\ref{sec:prelim} we recall some basic facts about algebraic groups, restriction of scalars, arithmetic lattices in semi-simple Lie groups and the definition of arithmetic, quasi-arithmetic and pseudo-arithmetic hyperbolic lattices. In Sections~\ref{sec:type1}, \ref{sec:type2}, \ref{sec:3-dim-arithmetic} and \ref{sec:type3} we review the classification of arithmetic hyperbolic lattices and describe the involutions in their commensurators. In Section~\ref{sec:kinds-of-subspaces} we describe the two main techniques to construct totally geodesic immersion of arithmetic hyperbolic orbifolds: subform subspaces and Weil restriction subspaces. The proofs of the main theorems are contained in Sections~\ref{sec:hereditary properties} and \ref{sec:proof-of-main-theorems}. In Sections~\ref{sec:non-fc} and \ref{sec:Le-Palmer} we exhibit examples of non-arithmetic lattices containing codimension-one non-fc subspaces and in Section~\ref{sec:non-fc-arithmetic} we show how to build examples of non-fc subspaces in arithmetic hyperbolic orbifolds. Finally, in Section~\ref{sec:example-section} we exhibit some interesting examples of totally geodesic immersions of hyperbolic orbifolds. 

\subsection*{Funding}
The work of M.B. was partially supported by 
CNPq, FAPERJ and by the MPIM in Bonn. The work of N.B. was partially supported by Russian Federation Government grant no.~075-15-2019-1926 and by the Theoretical Physics and Mathematics Advancement Foundation ``BASIS''. The work of A.K. was supported by the SNSF project no.~PP00P2-170560. The work of L.S. was supported by the PRIN project ``Geometry and topology of manifolds'' project no.~F53D23002800001 and by the INdAM Institute.

\subsection*{Acknowledgements}
We thank Nicolas Bergeron, Vincent Emery, and Leonid Potyagailo for their comments on an earlier version of this paper. We also thank Philippe Gille for suggesting the proof of Proposition \ref{prop:inner-involutions-7-dim-type-III}. N.\,B. is grateful to Sami Douba for fruitful discussions on arithmetic groups and hyperbolic geometry, as well as useful comments on the manuscript. L.\,S.\  thanks Giuseppe Ancona for the many invaluable discussions on the theory of algebraic groups. We are also grateful to Uri Bader for useful remarks on the later version of our paper. Last but not least, we thank an anonymous referee for a thorough reading of our paper and for the many useful remarks and suggestions.

\subsection*{Notation}
Let us introduce the following standard notation for the whole paper (unless stated otherwise):
\begin{itemize}
    \item If $k$ is a number field, then $k^\times$ denotes its multiplicative group, and $\OOO$ or $\OOO_k$ denotes its ring of integers. The algebraic closure of $k$ is denoted by $\overline{k}$. 
    \item Bold capital letters $\mathbf{G}$, $\mathbf{H}$, $\mathbf{O}_n$, etc., denote algebraic groups. Since we will be working exclusively with fields of characteristic $0$, all algebraic groups will be understood as subgroups of $\mathbf{GL}_n(\mathbb{C})$ for some $n>0$. By $\G^{\circ}$ we denote the connected component of the identity element of $\mathbf{G}$ in the Zariski topology. If $R \subset \mathbb{C}$ is a ring and $\mathbf{G}<\mathbf{GL}_n(\mathbb{C})$, then $\mathbf{G}(R)$ denotes the group of $R$-points, i.e.\ the subgroup of $\mathbf{G}$ consisting of matrices with entries in $R$.
    \item If $k \subset \mathbb{C}$ is a field and $\mathbf{G}<\mathbf{GL}_n(\mathbb{C})$ is a $k$-group, then $\mathbf{PG}$ denotes the adjoint group of $\mathbf{G}$ and $\mathbf{PG}(k)$ denotes the $k$-points of $\mathbf{PG}$ (now seen as a subgroup of $\mathbf{GL}(\mathfrak{g})$, where $\mathfrak{g}$ is the Lie algebra of $\mathbf{G}$). In general, the restriction $\mathbf{G}(k) \to \mathbf{PG}(k)$ of the adjoint map to the $k$-points is non-surjective.
    \item {\it Warning}: for an algebraic group $\G$, the non-bold notation $\mathrm{PG}(R)$ means the projectivization of $\G(R)$, i.e., $\mathrm{PG}(k) = \G(k)/Z(\G(k))$. For example, $\mathrm{PO}_f(k) = \mathbf{O}_f(k)/\{\pm I\}$, which is not the same as $\mathbf{PO}_f(k)$. 
    \item If $K/k$ is a finite field extension (both fields of characteristic zero) and $\mathbf{G}$ is an algebraic $K$-group, $\mathrm{Res}_{K/k}\,\mathbf{G}$ denotes the algebraic $k$-group obtained from $\mathbf{G}$ through the Weil restriction of scalars from $K$ to $k$. Its real points are denoted by $\mathrm{Res}_{K/k}\,\mathbf{G} (\mathbb{R})$. The set of field embeddings $\sigma:K \rightarrow \mathbb{C}$ that restrict to the identity on $k$ is denoted by $S^{\infty}_{K/k}$.
    \item The capital letters $G$, $H$, $\mathrm{O}_n$, etc., denote real Lie groups, and $G^{\circ}$ denotes the connected component of the identity of the group $G$ in the manifold topology. We warn the reader not to confuse $\mathbf{G}^{\circ}(\mathbb{R})$ (the Lie group consisting of the real points of the connected algebraic group $\mathbf{G}^{\circ}$) and $\mathbf{G}(\R)^{\circ}$ (the identity component of the Lie group $\mathbf{G}(\R)$).
    \item By $\Gamma$, $\Lambda$, etc., we shall denote \textit{lattices} in real Lie groups.
    \item A lattice $\Gamma$ is called \textit{uniform} in $G$ if $G/\Gamma$ is compact.
    \item Two subgroups $\Gamma_1 $ and $\Gamma_2$ of a group $G$ are called \textit{commensurable} and denoted $\Gamma_1 \sim \Gamma_2$ if the group $\Gamma_1 \cap \Gamma_2$ is a subgroup of finite index in each of them.
    \item Two subgroups $\Gamma_1$ and $\Gamma_2$ of $G$ are \textit{commensurable in the wide sense} if $\Gamma_1 \sim g \Gamma_2 g^{-1}$ for some $g \in G$ (we shall usually understand commensurability in the wide sense);
    \item \textit{The commensurator} of $\Gamma$ in $G$ is the group $$\mathrm{Comm}_G (\Gamma) = \{g \in G \mid g \Gamma g^{-1} \sim \Gamma\}.$$ 
\end{itemize}

\section{Preliminaries}\label{sec:prelim}

\subsection{Algebraic groups}
Some parts of this work require a considerable amount of the theory of algebraic groups. We give a short overview below and refer to \cite{Pla-Rap} for a comprehensive introduction.

Let $\Omega$ denote an algebraically closed field of characteristic zero. For the purpose of this work we may assume that $\Omega$ is either $\mathbb{C}$ or the field $\Qbar$ of algebraic numbers. A {\itshape linear algebraic group} is a Zariski-closed subgroup of the general linear group $\mathrm{GL}_n(\Omega)$. As such it is an algebraic subvariety $\mathbf{G}$ of $\mathrm{GL}_n(\Omega)$ such that the morphisms 
\begin{align}
&\mathbf{G} \times \mathbf{G} \ni (x,y) \mapsto x\cdot y \in \mathbf{G},\\
&\mathbf{G} \ni x \mapsto x^{-1} \in \mathbf{G},
\end{align} are algebraic and satisfy the group axioms. A morphism $\phi:\mathbf{G} \rightarrow \mathbf{H}$ is a morphism of algebraic varieties which is also a group homomorphism. An {\itshape isogeny} is an epimorphism with finite kernel.

If $k \subset \Omega$ is a subfield and $\mathbf{G}<\mathrm{GL}_n(\Omega)$ is a linear algebraic group, we say that $\mathbf{G}$ is {\itshape defined over $k$} (or that $\mathbf{G}$ is a {\itshape $k$-group}) if the ideal $\mathcal{I}$ of polynomial 
functions vanishing on $\mathbf{G}$ is generated by the intersection of $\mathcal{I}$ with the algebra of polynomials with coefficients in $k$. A morphism $\phi:\mathbf{G} \rightarrow \mathbf{H}$ of algebraic $k$-groups is defined over $k$ (i.\,e.\ it is a {\itshape $k$-morphism}) if it can be expressed via polynomials with coefficients in $k$. 
If $\mathbf{G}$ is an algebraic $k$-group and $\mathbf{H}$ is a normal $k$-subgroup of $\mathbf{G}$, then the quotient $\mathbf{G}/\mathbf{H}$ is a $k$-group and the quotient map $\mathbf{G} \rightarrow \mathbf{G}/\mathbf{H}$ is a $k$-morphism.
A {\itshape $k$-isogeny} is an isogeny which is defined over $k$.

If $\mathbf{G}\subset \mathrm{GL}_n(\Omega)$ is a $k$-group, its group of $k$-points is the intersection
$$\mathbf{G}(k) = \mathbf{G} \cap \mathrm{GL}_n(k).$$
If $\Omega=\mathbb{C}$ and $\mathbf{G}$ is a linear algebraic group, then $\mathbf{G} < \mathrm{GL}_n(\mathbb{C})$ is endowed with a complex Lie group structure.
If $k\subset \mathbb{R}$ and $\mathbf{G}$ is a $k$-group, then
$$\mathbf{G}(\mathbb{R})=\mathbf{G} \cap \mathrm{GL}_n(\mathbb{R})$$ is a real Lie group.

Given field extensions $k\subset K \subset \Omega$, we can regard a $k$-group $\G$ as a $K$-group $\mathrm{Ext}_{K/k}{\G}$ which is said to be obtained from $\mathbf{G}$ via extension of scalars from $k$ to $K$. If $\mathbf{H}$ is a $K$-group and $\mathrm{Ext}_{K/k}{\G}$ is $K$-isomorphic to $\mathbf{H}$, we say that the $k$-group $\mathbf{G}$ is a $k$-form of $\mathbf{H}$.

Given an algebraic $k$-group $\mathbf{G}<\mathrm{GL}_n(\Omega)$, we denote the connected component of the identity (in the Zariski topology) by $\mathbf{G}^{\circ}$. The identity component $\mathbf{G}^{\circ}$ is
a finite-index normal $k$-subgroup of $\mathbf{G}$, and $\mathbf{G}$ is {\itshape connected} if $\mathbf{G}=\mathbf{G}^{\circ}$. If $\mathbf{G}$ is a $k$-group then $\mathbf{G}^{\circ}(k)$ is Zariski dense in $\mathbf{G}^{\circ}$.

A {\itshape torus} is a connected algebraic group $\mathbf{T}$ for which there exists an isomorphism $\mathbf{T} \cong (\mathbb{G}_m)^d$, where $\mathbb{G}_m \cong \mathbf{GL}_1(\Omega)$ denotes the multiplicative group of $\Omega$ and $d=\mathrm{dim}(\mathbf{T})$ is the dimension of $\mathbf{T}$. A {\itshape character} of a torus $\mathbf{T}$ is a morphism $\chi:\mathbf{T} \rightarrow \mathbb{G}_m$. The characters of a torus $\mathbf{T}$ form a commutative group $\mathbf{X}^*(\mathbf{T})$ under the operation $(\chi_1 + \chi_2)(g)=\chi_1(g)\cdot \chi_2(g)$. If $\mathbf{T}$ has dimension $d$, then $\mathbf{X}^*(\mathbf{T})$ is isomorphic to $\mathbb{Z}^d$. A $k$-defined torus $\mathbf{T}$ that admits a $k$-defined isomorphism $\mathbf{T} \cong (\mathbb{G}_m)^d$ is said to be {\itshape $k$-split}, and this is equivalent to all the characters in $\mathbf{X}^*(\mathbf{T})$ being defined over $k$.

Let $\mathbf{G}$ be a connected algebraic group.
The maximal connected normal solvable subgroup of $\mathbf{G}$ is called the {\itshape radical} of $\mathbf{G}$ and is denoted by $\mathrm{R}(\mathbf{G})$. The radical of a $k$-group is always defined over $k$. If $\mathrm{R}(\mathbf{G})=\{e\}$ the group $\mathbf{G}$ is said to be {\itshape semisimple}. The quotient of any connected $k$-group $\mathbf{G}$ by its radical is a semisimple $k$-group. A disconnected algebraic group $\mathbf{G}$ is semisimple if $\mathbf{G}^{\circ}$ is semisimple.

All maximal tori in a connected semisimple algebraic group $\mathbf{G}$ are conjugate under $\mathbf{G}(\Omega)$ and thus all have the same dimension $d$, which we call the {\itshape rank} of $\mathbf{G}$. If $\mathbf{G}$ is a $k$-group, all maximal $k$-split tori are conjugate under $\mathbf{G}(k)$, and their dimension $s$ is the {\itshape $k$-rank} of $\mathbf{G}$. A maximal $k$-split torus $\mathbf{S}$ is always contained in a maximal $k$-defined torus $\mathbf{T}$.

If $s=0$ the $k$-split tori are trivial and the group $\mathbf{G}$ is said to be {\itshape anisotropic}. If $s=d$ then there exists a $k$-defined maximal torus and $\mathbf{G}$ is said to be {\itshape split}.

A {\itshape Borel subgroup} of a connected algebraic group $\mathbf{G}$ is a maximal connected solvable subgroup $\mathbf{B} < \mathbf{G}$. All Borel subgroups are conjugate under $\mathbf{G}(\Omega)$. It is not necessarily true that a $k$-group $\mathbf{G}$ has a Borel subgroup defined over $k$. If there exists a $k$-defined Borel subgroup, then $\mathbf{G}$ is said to be {\itshape quasi-split}.

A non-commutative connected algebraic group is {\itshape absolutely almost simple} if it has no nontrivial, connected, normal subgroups. To any connected semisimple group $\mathbf{G}$ one can associate the (finite) set $\{\mathbf{G}_1,\dots,\mathbf{G}_r\}$ of minimal connected normal subgroups of $\mathbf{G}$. Each of these subgroups is absolutely almost simple and $\mathbf{G}$ is an almost direct product of $\mathbf{G}_1, \dots,\mathbf{G}_r$, meaning that the map 

\begin{equation}\label{eq:absolutely-simple-isogeny}\textstyle{\prod_{i=1}^r} \mathbf{G}_i \ni (x_1,\dots,x_r) \mapsto x_1 \cdot\ldots \cdot x_r \in \mathbf{G}\end{equation} is an isogeny.

A connected algebraic $k$-group $\mathbf{G}$ is {\itshape almost $k$-simple} if it has no nontrivial, connected, $k$-defined normal subgroups. A connected $k$-group $\mathbf{G}$ is an almost direct product of its finitely many minimal connected normal $k$-defined subgroups, which are all almost $k$-simple. If $\mathbf{G}$ is almost $k$-simple, there exists a field $k'$ containing $k$, and an absolutely almost simple $k'$-group $\mathbf{E}$ such that $\mathbf{G}$ is isomorphic to the group $\mathrm{Res}_{k'/k}(\mathbf{E})$ obtained from $\mathbf{E}$ via {\itshape restriction of scalars} from $k'$ to $k$ (see \cite[\S 3.1.2.]{Tits66} and Section \ref{sec:restriction of scalars}).

The Lie algebra $\mathfrak{g}$ of an algebraic $k$-group $\mathbf{G}$ is defined as the algebra of left invariant derivations on the algebra of regular functions of $\mathbf{G}$ (Lie bracket given by the commutator). The group $\mathbf{G}$ acts by conjugation on $\mathfrak{g}$ via Lie algebra automorphisms, yielding the adjoint representation
$$\mathrm{Ad}:\mathbf{G} \rightarrow \mathbf{GL}(\mathfrak{g}). $$
The kernel of the adjoint representation is the centraliser $\mathrm{Z}(\mathbf{G}^{\circ})$ of the identity component, which is a finite, normal $k$-subgroup of $\mathbf{G}$. A $k$-group is adjoint if the adjoint representation is faithful. If $\mathbf{G}$ is semisimple, the quotient map $$ \mathbf{G} \rightarrow \mathbf{G}/\mathrm{Z}(\mathbf{G}^{\circ})$$ is a $k$-isogeny of $\mathbf{G}$ onto the adjoint $k$-group $\mathbf{PG}=\mathbf{G}/\mathrm{Z}(\mathbf{G}^{\circ})$.
If $\mathbf{G}$ is a connected semisimple adjoint group, it decomposes as a direct product of its absolutely simple factors, i.\ e.\ the isogeny in \eqref{eq:absolutely-simple-isogeny} is an isomorphism \cite[Theorem 2.6.]{Pla-Rap} and each factor of the direct product is simple. If moreover $\mathbf{G}$ is defined over $k$, it decomposes as a direct product of its $k$-simple factors \cite[\S 3.1.2]{Tits66}.

\subsubsection{Tits' classification of semisimple algebraic groups  \cite{Tits66}}
Suppose that $\mathbf{G}$ is a connected semisimple algebraic $k$-group, and denote by $\calG=\mathrm{Gal}(\overline{k}/k)$ the absolute Galois group of $k$.
Let $\mathbf{S}<\mathbf{G}$ be a maximal $k$-split torus and $\mathbf{T} < \mathbf{G}$ be a maximal $k$-defined torus which contains $\mathbf{S}$. Denote by $\Sigma \subset \mathbf{X}^*(\mathbf{T})$ the set of all roots of $\mathbf{G}$ relative to $\mathbf{T}$, by $\mathbf{N}$ the normaliser of $\mathbf{T}$ in $\mathbf{G}$ and by $W=\mathbf{N}/\mathbf{T}$ the {\itshape Weyl group} of $\mathbf{G}$ relative to $\mathbf{T}$. 

In $\mathbf{X}^*(\mathbf{T})\otimes \mathbb{R}$, we choose a scalar product invariant under the natural action of $W$, endowing $\Sigma$ with the structure of a root system. We also choose compatible orders in the character group $\mathbf{X}^*(\mathbf{S})$ and $\mathbf{X}^*(\mathbf{T})$, denote by $\Delta$ the system of simple roots for $\mathbf{G}$ relative to $\mathbf{T}$, and by  $\Delta_{0}$ the subsystem of those roots which vanish on $\mathbf{S}$.

The natural action of the group $\calG$ on $\mathbf{X}^*(\mathbf{T})$ induces an action by automorphisms of the root system $\Sigma$. There is a splitting short exact sequence
\begin{equation}\label{eq:exact-sequence-root-systems}
    1\rightarrow W\rightarrow \mathrm{Aut}(\Sigma)\rightarrow \Theta \rightarrow 1
\end{equation}
where $\Theta=\{\phi \in \mathrm{Aut}(\Sigma)|\,\phi(\Delta)=\Delta\}$ is isomorphic to the group of automorphisms of the Dynkin diagram of the root system $\Sigma$. This readily implies that $\mathrm{Aut}(\Sigma)$ is isomorphic to the semidirect product of  $W$ by $\Theta$ \cite[Section 12.2]{Humphreys}. If the action of $\calG$ on $\Sigma$ takes values in the Weyl group $W$, the group $\mathbf{G}$ is called an {\itshape inner form}. 
The most relevant properties of the action of $\calG$ on the root system $\Sigma$ can be encoded in the {\itshape Tits index} and in the {\itshape Tits symbol}, which are defined as follows.

The action of $\calG$ on $\Sigma$ projects to an action, called the *-action, on the system $\Delta$ of simple roots and on the Dynkin diagram of the root system. The index of a group $\mathbf{G}$ is the data of the Dynkin diagram, together with the *-action of the absolute Galois group $\calG$ on the diagram. The orbits of the vertices in $\Delta \setminus \Delta_0$ under the *-action are the so-called {\itshape distinguished orbits} and are circled.

We notice that it is implicit in Tits' definition via the action of $\calG$ on conjugacy classes of maximal parabolic groups \cite[\S 2.3]{Tits66} that the *-action  of $\calG$ on the Dynkin diagram does not depend on the choice of a maximal $k$-torus $\mathbf{T}$.

By \cite[\S 3.1.2]{Tits66}, if $\mathbf{G}$ is semisimple and defined over $k$ then it decomposes as an almost direct product of $k$-simple groups (a group is $k$-simple if it has no infinite normal subgroup defined over $k$). If $\mathbf{G}$ is $k$-simple, then there exists a finite field extension $K$ of $k$ and an abolutely simple $K$-defined group $\mathbf{H}$ such that $\mathbf{G}$ is isogenous to the group $\mathrm{Res}_{K/k}\,\mathbf{H}$ obtained from $\mathbf{H}$ via restriction of scalars from $K$ to $k$ (see Section \ref{sec:restriction of scalars} for the definition of restriction of scalars). Moreover the field $K\cong k[x]/(p(x))$ is uniquely determined as an abstract field extension  of $k$. By an absolutely simple group we mean a group $\mathbf{H}$ which has no infinite normal subgroup defined over $\mathbb{C}$. 

It follows that the classification of semisimple algebraic groups (up to isogeny) is reduced to the classification of the absolutely simple ones. If $\mathbf{G}$ is assumed to be either simply connected or adjoint, the above decompositions are in fact direct product decompositions with all factors simply connected or adjoint.

The Tits symbol of a group $\mathbf{G}$ is a symbol of the form $^{g}X_{n,r}^{(t)}$, where $X_n$ determines the (absolute) type of the Dynkin diagram of $\mathbf{G}$, $g$ denotes the order of the *-action of the group $\calG$ on the Dynkin diagram and $r$ is the {\itshape relative rank} of $\mathbf{G}$, i.\ e.\ the number of distinguished orbits for the *-action. The ${(t)}$ symbol appears only for groups of classical type and it corresponds to the degree of a certain central division $k$-algebra involved in the definition of the corresponding group (see \cite[pp. 55-61]{Tits66}).

The group $\mathbf{G}$ is anisotropic if and only if $\Delta_0=\Delta$ (equivalently, $r=0$), while it is quasi-split if and only if $\Delta_0=\emptyset$. In the quasi-split case all roots in the Tits index belong to a circled orbit and the action of $\calG$ on the root system $\Sigma$ preserves a system $\Delta' \subset \Sigma$ of simple roots.

Indeed, let us denote by $\mathfrak{t}$ the Lie algebra of $\mathbf{T}$, by $\mathfrak{b}$ the Lie algebra of $\mathbf{B}$ and by $L_{\alpha}=\{x \in \mathfrak{g}\,|\; [t,x]=\alpha(t)\cdot x\ \textrm{for all } t \in \mathfrak{t}\}$ the {\itshape root space} associated to a root $\alpha \in \Sigma$ (where $\alpha$ is now interpreted as an element of the dual $\mathfrak{t}^*$ of $\mathfrak{t}$). We have that 
\begin{equation}\label{eq:Borel-decomposition}\mathfrak{b}=\mathfrak{t} \oplus \sum_{\alpha>0} L_{\alpha},\end{equation}
where the sum on the right hand side ranges over all {\itshape positive} roots with respect to the partial ordering induced by the choice of a set $\Delta'$ of simple roots \cite[Section 16.4]{Humphreys}. Since $\mathbf{T}$ and $\mathbf{B}$ are $k$-defined, it follows that the direct sum decomposition in \eqref{eq:Borel-decomposition} is preserved under the action of $\calG$. Hence, also the system of simple roots $\Delta'$ is preserved by the action of $\calG$.  
By choosing the ordering on $\mathbf{X}^*(\mathbf{T})$ so that $\Delta=\Delta'$ we may assume that the action of $\calG$ on the Dynkin diagram of the root system $\Sigma$ is given by elements of the subgroup $\Theta < \mathrm{Aut}(\Sigma)$. 

If $\mathbf{G}$ is a semisimple algebraic $k$-group, its Tits index and symbol are by definition those of the connected group $\mathbf{G}^{\circ}$.

\subsection{Weil restriction of scalars}\label{sec:restriction of scalars}

In this section we briefly review a classical construction in algebraic geometry which will play an important role throughout the paper: {\itshape Weil's restriction of scalars}.

Suppose that $K/k$ is a finite extension of algebraic number fields of degree $d$ and that $X$ is an algebraic variety over $K$ of dimension $n$. Then $X$ can be interpreted as an algebraic variety $\mathrm{Res}_{K/k}\,X$ over $k$ of dimension $n\cdot d$. Such operation yields a covariant functor from the category of algebraic varieties over $K$ to the category of algebraic varieties over $k$, since for any $K$-morphism $f:X\rightarrow Y$ there exists an induced $k$-morphism $$\mathrm{Res}_{K/k}\,f:\mathrm{Res}_{K/k}\,X\rightarrow \mathrm{Res}_{K/k}\,Y.$$ 

These functorial properties easily follow from the existence of a natural map $p:\mathrm{Res}_{K/k}\,X\rightarrow X$ which is $K$-defined and has the following {\itshape universal property} (cf. \cite[Section 1.7]{Margulis-book}): for any $k$-variety $Y$ and any $K$-morphism $f:Y\rightarrow X$, there exists a unique $k$-morphism $\phi:Y\rightarrow \mathrm{Res}_{K/k}\,X$ such that $f=p\circ \phi$. The map $p$ induces a bijection between the $k$-points of $\mathrm{Res}_{K/k}\,X$ and the $K$-points of $X$. The functor $\mathrm{Res}_{K/k}$ is right adjoint to the {\itshape extension of scalars} functor $\mathrm{Ext}_{K/k}$ that takes an algebraic variety $X$ over $k$ and sees it as an algebraic variety $\mathrm{Ext}_{K/k}\,X$ defined over $K$.

We shall be exclusively concerned with the case where $X$ is an affine variety, in which case restriction of scalars admits a fairly explicit description as we now explain.
We notice that left multiplication is a $k$-linear map on the $k$-vector space $K$.
By fixing a basis $\mathcal{B}=(b_1,\dots,b_{d})$, we construct the left-regular representation of $K$ as a (commutative) $k$-subalgebra $\mathcal{A}(k)$ of the algebra $M_{d \times d}(k)$. The equations that identify $\mathcal{A}(k)$ are $k$-linear in the entries $y_{ij}, i,j=1,\dots,d$, of the matrices in $M_{d \times d}(k)$.
 
Let us suppose that $X\subset \mathbb{A}_K^N$ is defined as the zero locus of a finite set of polynomials $p_1,\dots, p_m \in K[x_1,\dots,x_N].$ 
Using the left regular representation of $K$, we can associate to each equation of the form $p_l(x_1,\dots,x_N)=0$, $l=1,\dots,k$, a system of polynomial equations in $d^2 N$ variables with coefficients in $k$. It is sufficient to interpret each of the coefficients of $p_l$ as a $d\times d$ matrix with coefficients in $k$ and each variable $x_h$ as a $d\times d$ matrix with entries given by variables $y_{ijh}$,  and $i,j=1,\dots,d$. The operation of multiplication in $K$ now translates to row-by-column multiplication of $d\times d$ matrices in $\mathcal{A}(k)$.

Finally, we can form a system of polynomial equations with coefficients in $k$ by adjoining the equations coming from each polynomial $p_l$, $l=1,\dots m$, with the linear equations involving the coefficients $y_{ijh}$, $i,j=1,\dots,d$ which define $\mathcal{A}(k)$ as a subalgebra of $M_{d \times d}(k)$ (we adjoin this set of equations for each variable $x_h$, $h=1,\dots,N$). Then the restriction of scalars $$\mathrm{Res}_{K/k}\,X \subset \mathbb{A}^{d^2N}_k$$ is the affine $k$-variety defined as the zero-locus of this system of equations. It follows from the construction that there is a one-to-one correspondence between the $K$-points of $X$ and the $k$-points of $\mathrm{Res}_{K/k}\,X$.

Assume that we have fixed a field embedding $k \rightarrow \mathbb{R}$. In view of our need to review the connection between restriction of scalars and the construction of arithmetic lattices in semisimple Lie groups, we are particularly interested in describing the group of real points $\mathrm{Res}_{K/k}\,X(\mathbb{R})$ of $\mathrm{Res}_{K/k}\,X$. 

We denote by $S_{K/k}^{\infty}$ the set of field embeddings of $K$ which restrict to the identity on $k$. There are $d=[K:k]$ such embeddings, so that $S_{K/k}^{\infty} = \{\sigma_0,\sigma_1,\dots,\sigma_{d-1}\}$. For each $\sigma \in S_{K/k}^{\infty}$ and $p\in K[x_1,\dots,x_N]$, we denote by $p^{\sigma}$ the polynomial obtained by applying $\sigma$ to each coefficient of $p$. Similarly, we denote by $X^{\sigma}$ the affine algebraic $\sigma(K)$-variety defined as the zero locus of $p_1^{\sigma}, \ldots, p_m^{\sigma}\in \sigma(K)[x]$. 

By the primitive element theorem, the field $K$ is equal to $k(\alpha)$ for some $\alpha \in K$ with minimal polynomial $q(x) \in k[x]$ of degree $d$. Therefore, as an abstract field $K$ is isomorphic to $k[x]/(q(x))$. By extending coefficients to $\mathbb{R}$, we see that $K \otimes_k \mathbb{R}$ is isomorphic to $\mathbb{R}[x]/(q(x))$. Let us assume that $q(x)$ has $r$ real roots and $c=(d-r)/2$ pairs of complex conjugate roots. Choose a set $\mathcal{R}$ of representatives for the infinite places of $K$, i.e.\ $\mathcal{R}$ contains all the real roots $\{\alpha_1,\dots,\alpha_r\}$ of $q$ plus a set $\{\beta_1,\dots,\beta_c\}$ of representatives for each pair of complex conjugate roots. Denote by $\sigma_i$, $i=1,\dots,r$ the embedding of $K$ defined by $x\mapsto \alpha_i$ and by $\tau_j$, $j=1,\dots,c$, the embedding defined by $x\mapsto \beta_j$.  The map 
\begin{equation}\label{eq:tensoring-by-R}
\mathbb{R}[x]/(q(x)) \ni [p(x)] \mapsto (p(\alpha_1),\dots, p(\alpha_r), p(\beta_1), \dots, p(\beta_c))
\end{equation} 
obtained by considering these $r+c$ embeddings simultaneously is an isomorphism between $K \otimes_k \mathbb{R}$ and $\mathbb{R}^r \times \mathbb{C}^c$.

Notice that we have the following ring isomorphisms:
$$K\cong \mathcal{A}(k),\; K\otimes_k \mathbb{R}\cong \mathcal{A}(\mathbb{R}),$$ and thus, by composing with the isomorphism in \eqref{eq:tensoring-by-R}, $\mathcal{A}(\mathbb{R})$ is canonically isomorphic to $\mathbb{R}^r \times \mathbb{C}^c$.
Finally, we interpret the variety $X$ as being defined over the abstract field $k[x]/(q(x))$, while $\mathrm{Res}_{K/k}\,X(\mathbb{R})$ corresponds to $X\left(\mathbb{R}[x]/(q(x)) \right)$, and due to the isomorphism \eqref{eq:tensoring-by-R} we have that $$\mathrm{Res}_{K/k}\,X(\mathbb{R}) \cong  X^{\sigma_1}(\mathbb{R})\times \ldots\times X^{\sigma_{r}}(\mathbb{R}) \times X^{\tau_1}(\mathbb{C})\times \ldots\times X^{\tau_c}(\mathbb{C}).$$ This decomposition is actually defined over the Galois closure $\overline{K}$ of the extension $K/k$.  The $k$-points of $\mathrm{Res}_{K/k}\,X$ correspond to elements of the form $$(\sigma_1(x),\dots,\sigma_r(x), \tau_1(x),\dots,\tau_{c}(x))$$ where $x \in X(K)$. 

Finally, we notice that if $\G$ is an algebraic $K$-group, then $\mathrm{Res}_{K/k}\,\G$ is an algebraic $k$-group. Indeed, the group structure of $\G$ is defined by a $K$-polynomial map $\G \times \G \rightarrow \G$, and the functorial nature of restriction of scalars induces a $k$-polynomial map $\mathrm{Res}_{K/k}\,\G \times \mathrm{Res}_{K/k}\,\G \rightarrow \mathrm{Res}_{K/k}\,\G$ which endows $\mathrm{Res}_{K/k}\,\G$ with a group structure. 

\subsection{Arithmetic lattices}\label{sec:general definitions}
Let $G$ be a non-compact, connected, simple real Lie group (i.e.\ a group whose Lie algebra $\mathfrak{g}$ is simple). 

A subgroup $\Gamma<G$ is an \textit{arithmetic lattice} if there exist:
\begin{enumerate}
\item an algebraic number field $k$,
\item a connected, $k$-simple algebraic $k$-group $\mathbf{G}$,
\item a continuous surjection with compact kernel:
$$p:\mathrm{Res}_{k/\mathbb{Q}}\mathbf{G}(\mathbb{R})^{\circ}\rightarrow G $$
\end{enumerate}
such that $p(\mathrm{Res}_{k/\mathbb{Q}}\,\mathbf{G}(\mathbb{Z}) \cap \mathrm{Res}_{k/\mathbb{Q}}\mathbf{G}(\mathbb{R})^{\circ} )$ is commensurable with $\Gamma$.

Notice how an arithmetic lattice defined as above is indeed a lattice, in the sense that coset space $G/\Gamma$ carries a finite $G$-invariant measure. Indeed it follows from the Theorem of Borel and Harish-Chandra \cite[Theorem~12.3]{BHC62} that $\mathrm{Res}_{k/\mathbb{Q}}\,\mathbf{G}(\mathbb{Z})$ is a lattice in $\mathrm{Res}_{k/\mathbb{Q}}\,\mathbf{G}(\mathbb{R})$. Since being a lattice is invariant under commensurability, it follows that the intersection of $\mathrm{Res}_{k/\mathbb{Q}}\,\mathbf{G}(\mathbb{Z})$ with $\mathrm{Res}_{k/\mathbb{Q}}\mathbf{G}(\mathbb{R})^{\circ}$ is a lattice in this latter connected Lie group. Moreover, since the map $p:\mathrm{Res}_{k/\mathbb{Q}}\mathbf{G}(\mathbb{R})^{\circ}\rightarrow G$ has compact kernel, we get that $p(\mathrm{Res}_{k/\mathbb{Q}}\,\mathbf{G}(\mathbb{Z}))$ and the commensurable group $\Gamma$ are lattices.

Since $G$ is assumed to be simple, it follows that the group $\mathbf{G}$ is \textit{admissible} (for the Lie group $G$), in the sense that $\mathrm{Res}_{k/\mathbb{Q}}\mathbf{G}(\mathbb{R})^{\circ}$ has only one non-compact factor in its decomposition as an almost direct product of $\mathbb{R}$-simple groups, and this factor is isogenous to $G$. By fixing an embedding $\sigma_0:k \to \mathbb{C}$ corresponding to the non-compact factor of $\mathrm{Res}_{k/\mathbb{Q}}\,\mathbf{G}(\mathbb{R})$ as in Section~\ref{sec:restriction of scalars}  and identifying the group $\mathbf{G}$ with $\mathbf{G}^{\sigma}$, we may reformulate the above definition by declaring $\Gamma<G$ to be an arithmetic lattice if there exist an algebraic number field $k\subset \mathbb{C}$ with ring of integers $\mathcal{O}$ and a connected, $k$-simple, admissible algebraic $k$-group $\mathbf{G}$ such that $\Gamma$ is commensurable with $\mathbf{G}(\mathcal{O})$ under the isogeny $\mathbf{G}(\mathbb{R})^{\circ}\xrightarrow{i} G$ (if $k \subset \mathbb{R}$) or $\mathbf{G}(\mathbb{C})\xrightarrow{i} G$ (if $k \subset \mathbb{C}$ is a complex field). In this setting, the number field $k$ and the algebraic $k$-group $\mathbf{G}$ are called the \textit{field of definition} and \textit{group of definition} of the arithmetic lattice $\Gamma$. The embedding $\sigma_0:k \to \mathbb{C}$ is referred to as the \textit{identity embedding}: it is uniquely defined if $\sigma(k)\subset \mathbb{R}$, while it is only defined up to complex conjugation if $\sigma(k)\not \subset \mathbb{R}$. Unless otherwise stated, we will identify the field of definition with its image under the identity embedding.

Notice that the latter case in which $k$ is a complex field can only occur if $G$ is isogenous to a complex Lie group. Moreover when this is the case the field $k$ is forced to have a single complex place, i.e.\ a single pair of complex-conjugate field embeddings. Indeed a complex algebraic group is compact if and only if it is finite  \cite[p.~134, Problem~3]{OV}, therefore if $k$ were to have more than one complex place there would be more than one non-compact factors in the group $\mathrm{Res}_{k/\mathbb{Q}}\mathbf{G}(\mathbb{R})^{\circ}$ and $G$ would not be simple.

By Godement's compactness criterion we have that an arithmetic lattice $\Gamma \sim \mathbf{G}(\mathcal{O})$ is uniform if and only if $\mathbf{G}(\mathcal{O})$ contains no nontrivial unipotent elements. This condition is automatically fulfilled if $\mathbf{G}^{\sigma}(\mathbb{R})$ is compact for some embedding $\sigma:k \rightarrow \mathbb{R}$. Therefore in order for a lattice $\Gamma$ to be non-uniform it is necessary that there are no compact factors in the group $\mathrm{Res}_{k/\mathbb{Q}}\mathbf{G}(\mathbb{R})^{\circ}$. It follows that a necessary condition for $\Gamma$ to be non-uniform is that $k=\mathbb{Q}$ (if $k\subset \mathbb{R}$) or that $k$ is an imaginary quadratic extension of the field of rational numbers (if $k\subset \mathbb{C}$ is a complex field). In general these conditions are not sufficient to guarantee that $\Gamma$ is non-uniform. 

If $\mathbf{G}$ is an algebraic $k$-group as above and $\Gamma<G$ is commensurable with $i(\mathbf{G}(\mathcal{O}))$, then the commensurator $\mathrm{Comm}_G(\Gamma)$ is easily seen to contain the group $i(\G(k))$, and this group is dense in $G$.
Moreover, there is the following arithmeticity criterion by Margulis: if
$\Gamma$ is an irreducible lattice in a connected, semisimple real Lie group $G$, then $\Gamma$ is arithmetic if and only if $\mathrm{Comm}_G(\Gamma)$ is dense in $G$ \cite[Theorem 9]{Margulis77}.

 If $\Gamma<G$, where $G$ is a simple Lie group with a finite number of connected components, we say that $\Gamma$ is arithmetic if $\Gamma^{\circ}=\Gamma \cap G^{\circ}$ is arithmetic. Notice that $\Gamma$ and $\Gamma^{\circ}$ are necessarily commensurable.
If $\Gamma<G$ is non-arithmetic, then $\Comm_G(\Gamma)$ is the maximal (by subgroup inclusion) lattice of $G$ containing $\Gamma$ (see \cite[Theorem 9]{Margulis77} and \cite[Chapter IX, Theorem~B \& Proposition~4.\,22]{Margulis-book}).
The celebrated Margulis' Arithmeticity Theorem \cite{Mar84} states that if $\Gamma$ is an irreducible lattice in a semi-simple Lie group $G$ with $\rk_\R G \ge 2$, then $\Gamma$ is arithmetic.

Now, let us assume that the non-compact simple Lie group $G$ is \textit{algebraic}, in the sense that $G$ is isomorphic to the group of real points of some connected real algebraic group $\mathbf{A}$. We may thus equip $G$ with the Zariski topology on $\mathbf{A}(\mathbb{R})$, i.e.\ a subset $C\subset G$ is closed if it corresponds to the zero-locus of some set of polynomial functions with real coefficients on $\mathbf{A}(\mathbb{R})$. By Borel's Density Theorem \cite{Borel-density}, if $\Gamma<G$ is a lattice, then $\Gamma$ is Zariski-dense in $G$.

The conclusions of Borel's density Theorem can be strengthened: $\Gamma$ is also dense in the Zariski topology of $\mathbf{A}$ considered as a complex algebraic set. To see this, we first notice that $\mathbf{A}(\mathbb{R})$ consists only of smooth points, and is thus Zariski-dense in $\mathbf{A}(\mathbb{C})$. Therefore, any $\mathbb{R}$-polynomial function that vanishes on $\Gamma$ vanishes on all of $\mathbf{A}(\mathbb{C})$.

To conclude, we apply the following lemma. It might be well known to the experts but we could not find it in the literature, so we include a proof. 

\begin{lemma}\label{lem:R-density-implies-C-density}
Let $\mathcal{X}$ be an affine algebraic variety defined over $\mathbb{R}$, and $Y \subset \mathcal{X}(\mathbb{R})$ a Zariski-dense subset of $\mathcal{X}(\mathbb{C})$. Then $Y$ is $\mathbb{C}$-Zariski dense in $\mathcal{X}$.
\end{lemma}

\begin{proof}
Let $p\in \mathbb{C}[x_1,\dots,x_n]$ be a polynomial with complex coefficients such that $p(y_1,\dots,y_n)=0$ for all $(y_1,\dots,y_n) \in Y$. We must prove that $p$ evaluates to $0$ on $\mathcal{X}$.

Denote by $\overline{p}$ the polynomial obtained from $p$ by applying complex conjugation to all its coefficients. Since $Y$ is a subset of $\mathcal{X}(\R)$, it follows that $0=\overline{p}(\overline{y_1},\dots \overline{y_n})=\overline{p}(y_1,\dots,y_n)$ for all $(y_1,\dots,y_n) \in Y$ , i.e.\ also $\overline{p}$ evaluates to $0$ on $Y$.
It follows that both polynomials 
$g_1=p+\overline{p}$ and $g_2=p\cdot\overline{p}$
evaluate to $0$ on $Y$. Moreover both $g_1$ and $g_2$ are invariant under complex conjugation, and thus belong to $\mathbb{R}[x_1,\dots,x_n]$. Since $Y$ is Zariski closed in $\mathcal{X}$ we have that $g_1$ and $g_2$ evaluate to $0$ on all of $\mathcal{X}$.

This means that $p$ and $\overline{p}$, now considered as regular functions on $\mathcal{X}$, satisfy $p=-\overline{p}$ and $p\cdot \overline{p}=0$, implying that $-p^2=0$. It follows that $p=0$, i.e.\ $p$ evaluates to $0$ on $\mathcal{X}$.
\end{proof}

\subsubsection{Vinberg's commensurability invariants}
We now recall Vinberg's construction of commensurability invariants for Zariski-dense subgroups of a semisimple algebraic group. Let $\mathbf{A}$ be an algebraic group over an algebraically closed field $F$, and $\Gamma < \mathbf{A}(F)$ a subgroup. A field $k < F$ is a \textit{field of definition} for $\Gamma$ if there exists a 
basis $\mathcal{B}$ for the Lie algebra $\mathfrak{a}$ of $\mathbf{A}$ such that the image $\mathrm{Ad}\,\Gamma$ of the adjoint action of $\Gamma$ on $\mathfrak{a}$ is represented with respect to $\mathcal{B}$ by matrices with coefficients in $k$.

\begin{theorem}[Vinberg \cite{Vin71}]\label{teo:Vinberg-invariants}
Let $F$ be an algebraically closed field of characteristic zero, $\mathbf{A}$ a semisimple algebraic group over $F$, and $\Gamma<\mathbf{A}$ a Zariski dense subgroup.

\begin{enumerate}
\item There exists a smallest field of definition for $\Gamma$, given by the field $$k = \Q(\{\tr(\mathrm{Ad}\,\gamma) \mid \gamma \in \Gamma\}),$$ where $\tr(\mathrm{Ad}\,\gamma)$ denotes the trace of the adjoint action of $\gamma$ on $\mathfrak{a}$. It is an invariant of the commensurability class of $\Gamma$;
\item The Zariski closure of $\{\mathrm{Ad}\, \gamma \mid \gamma \in \Gamma\}$ in $\mathbf{GL}(\mathfrak{a})$ is an algebraic $k$-group $\mathbf{G}$. The group $\mathrm{Ext}_F(\mathbf{G})$ obtained from $\mathbf{G}$ by extending scalars from $k$ to $F$ is isomorphic to the adjoint group $\mathbf{PA}$ of $\mathbf{A}$ (i.e.\ $\mathbf{G}$ is a $k$-form of $\mathbf{PA}$), and the adjoint image of $\Gamma$ in $\mathbf{PA}$ is contained in the group $\mathbf{G}(k)$;
\item The group $\mathbf{G}^{\circ}$ is uniquely determined up to $k$-isomorphism by the commensurability class of $\Gamma$.
\end{enumerate}
\end{theorem}
We will refer to the pair $(k,\mathbf{G})$ in the statement of Theorem \ref{teo:Vinberg-invariants} as the \textit{Vinberg invariants} of (the commensurability class of) $\Gamma$.

If $\Gamma$ is a lattice in a connected, simple, noncompact algebraic Lie group $G=\mathbf{A}(\mathbb{R})$, we have seen that $\Gamma$ is Zariski dense in $\mathbf{A}$, viewed as a complex algebraic set. We can then apply Theorem \ref{teo:Vinberg-invariants} and define the Vinberg invariants of $\Gamma$ which will be called the \textit{adjoint trace field} $k$ and the \textit{real ambient group} $\mathbf{G}$, respectively. 

If, moreover, the group $G$ has the structure of a complex Lie group (which will happen whenever the group $\mathbf{A}$ is not absolutely simple) we may assume that $G=\mathbf{B}(\mathbb{C})$ with $\mathbf{B}$ a complex algebraic group such that $\mathrm{Res}_{\mathbb{C}/\mathbb{R}}\, \mathbf{B}$ is $\mathbb{R}$-isomorphic to $\mathbf{A}$. It follows that $\Gamma$ is also Zariski dense in $\mathbf{B}$, and we may construct the Vinberg invariants of $\Gamma$ as a subgroup of $\mathbf{B}$. The reader should be careful not to confuse the Vinberg invariants of $\Gamma<\mathbf{A}$ with the Vinberg invariants of $\Gamma<\mathbf{B}$: these are distinct fields and distinct algebraic groups. In order to avoid confusion we will use a different terminology (and notation) for the Vinberg invariants of a lattice in a  complex algebraic group, consistent with the one typically used for lattices in $\mathrm{PSL}_2(\mathbb{C})$: the field $L$ will be referred to as the \textit{invariant trace field} and the algebraic $L$-group will be referred to as the \textit{complex ambient group}.

Finally, we remark that if $\Gamma<G$ is an arithmetic lattice in an algebraic, absolutely simple Lie group, then by \cite[Lemma 2.6]{PR} the algebraic number field $k\subset \mathbb{C}$ and the admissible algebraic $k$-group $\mathbf{G}$ such that $\Gamma \sim \mathbf{G}(\mathcal{O})$ are precisely the Vinberg invariants of $\Gamma$, and are therefore commensurability invariants. In particular if $G=\mathbf{B}(\mathbb{C})$ is a complex Lie group, we have that $k\subset \mathbb{C}$ is a complex field which coincides with the invariant trace field of $\Gamma$, and $\mathbf{G}$ is the complex ambient group of $\Gamma$. If $\mathbf{G}=\mathbf{A}(\mathbb{R})$ with $\mathbf{A}$ an absolutely simple algebraic $\mathbb{R}$-group, then $k \subset \mathbb{R}$ is the adjoint trace field of $\Gamma$ and $\mathbf{G}$ is its real ambient group.

\subsection{Hyperbolic lattices}\label{sec:hyperbolic-lattices}

We denote by $\HH^n$ the hyperbolic space, which is the unique simply connected complete Riemannian $n$-manifold with constant sectional curvature $-1$. The \textit{hyperboloid model} $\HH^n$ for hyperbolic space is defined as follows.

Consider the real vector space $\mathbb{R}^{n+1}$ equipped with the standard quadratic form $f$ of signature $(n,1)$: 
$$f(\mathbf{x})=-x_0^2+x_1^2+\dots+x_n^2.$$

Let $\mathfrak{H}$ be the hyperboloid $$\mathfrak{H}=\{\mathbf{x} \in \mathbb{R}^{n+1} \,|\, f(\mathbf{x}) = -1 \} = \mathfrak{H}^+ \cup \mathfrak{H}^-,$$
where
$$
\mathfrak{H}^+ = \{\mathbf{x} \in \mathfrak{H} \,|\, x_0 > 0\} \text{ and } \mathfrak{H}^- = \{\mathbf{x} \in \mathfrak{H} \,|\, x_0 < 0\}.
$$

By equipping $\mathfrak{H}^+$ with the Riemannian metric induced by restricting $f$ to each tangent space $T_p(\mathfrak{H}^+)$, $p \in \mathfrak{H}^+$, we obtain the hyperboloid model for the $n$-dimensional hyperbolic space $\mathbb{H}^n$.

Let $\mathrm{O}_{n,1} = \mathbf{O}_f(\R)$ be the orthogonal group of $f(\mathbf{x})$, $\mathrm{PO}_{n,1} = \mathbf{PO}_f(\R)$ be its adjoint group, and $\mathrm{O}^+_{n,1} < \mathrm{O}_{n,1}$ be the subgroup preserving $\mathfrak{H}^+$. Thus, we can identify $\mathrm{Isom}(\mathbb{H}^n)$ with $\mathrm{O}^+_{n,1}$.

Notice that $\mathrm{O}^+_{n,1}$ is a simple real Lie group but is not realized as the group of $\mathbb{R}$-points of an algebraic group, since it is not an algebraic subgroup of $\mathrm{O}_{n,1}$. However, $\mathrm{O}^+_{n,1}$ is isomorphic (as a Lie group) to the group of $\mathbb{R}$-points of the adjoint, $\mathbb{R}$-simple real algebraic group $\mathbf{PO}_{n,1}=\mathbf{O}_{n,1}/\{\pm \mathrm{id}\}$, where $\mathbf{O}_{n,1} = \mathbf{O}_f$. The isomorphism $\mathrm{O}^+_{n,1} \cong \PO_{n,1}=\mathbf{PO}_{n,1}(\R)$ is obtained by restricting the adjoint homomorphism $\mathrm{Ad}:\mathrm{O}_{n,1}\rightarrow \mathrm{PO}_{n,1}$ to the subgroup $\mathrm{O}^+_{n,1}$. The algebraic group $\mathbf{O}_{n,1}$ has two connected components and $\mathbf{O}_{n,1}^\circ = \mathbf{SO}_{n,1}$. Then the adjoint algebraic group $\mathbf{PO}_{n,1}$ 
\begin{itemize}
    \item is connected, if $n$ is even ($-I \notin \mathbf{SO}_{n,1}$ and therefore $\mathbf{PO}_{n,1} \cong \mathbf{SO}_{n,1}$, with the isomorphism being induced by the map $\mathbf{O}_{n,1} \rightarrow \mathbf{SO}_{n,1}$ given by $M \mapsto \mathrm{det}(M)^{-1}\cdot M$),
    \item and has two connected components when $n$ is odd (since $-I \in \mathbf{SO}_{n,1}$); in this case its irreducible component is $\mathbf{PSO}_{n,1}$.
\end{itemize}

A {\it hyperbolic lattice} is then defined as a lattice in $\mathrm{PO}_{n,1}$. If $\Gamma < \mathrm{PO}_{n,1}$ is a hyperbolic lattice, the quotient $M=\mathbb{H}^n/\Gamma$ is a finite-volume {\itshape hyperbolic orbifold}. If $\Gamma$ is torsion-free, then $M$ is a Riemannian manifold, and is called a {\itshape hyperbolic manifold}.

If $\Gamma<\mathrm{PO}_{2m,1}$ is a lattice, then its real ambient group is always a $k$-form of $\mathbf{PO}_{n,1}\cong \mathbf{SO}_{n,1}$. On the other hand, if $\Gamma<\mathrm{PO}_{2m+1,1}$ is a lattice, then the real ambient group of $\Gamma$ is a $k$-form of $\mathbf{PSO}_{2m+1,1}$ (if $\Gamma<\mathrm{PSO}_{2m+1,1}$ consists entirely of orientation-preserving isometries) or of $\mathbf{PO}_{2m+1,1}$ (if $\Gamma$ contains orientation-reversing isometries).

Consider now the isometry group $G = \mathrm{PO}_{n,1}$ of the hyperbolic space $\HH^n$, and let $\G$ be an \textit{adjoint} admissible (for $G$) algebraic $k$-group. Then any subgroup $\Gamma$ commensurable with $\G(\mathcal{O})$ is an \textit{arithmetic hyperbolic lattice}. The commensurator $\mathrm{Comm}_G(\Gamma)$ of such  a lattice $\Gamma$ is precisely $\mathbf{G}(k)$, since $\mathbf{G}(\mathbb{C})\cong \mathbf{PO}_{n+1}(\mathbb{C})$ is centreless (see \cite[Remark 5.2.5]{WM} and \cite[Theorem 3(b)]{Borel-adjoint}).

Since the case $\rk_\R G = 1$ admits also non-arithmetic lattices, we introduce some weaker types of arithmeticity of hyperbolic lattices. 

\begin{definition}[Vinberg \cite{Vin67}]\label{def:QA}
A lattice $\Gamma < \mathrm{PO}_{n,1}$ is called \textit{quasi-arithmetic} if there exists an algebraic number field $k$ and an  adjoint, admissible (for $\mathrm{PO}_{n,1}$) algebraic $k$-group $\G$, such that $\Gamma$ is conjugate into $\G(k)$. A lattice $\Gamma$ is \textit{properly quasi-arithmetic} if it is  quasi-arithmetic, but not arithmetic.     
\end{definition}

In essence, a quasi-arithmetic lattice $\Gamma<\mathrm{PO}_{n,1}$ is a lattice contained in the commensurator of an arithmetic lattice. The number field $k$ and the algebraic $k$-group $\mathbf{G}$ are called the \textit{field of definition} and \textit{group of definition} of the quasi-arithmetic lattice $\Gamma$, thereby extending the definitions given in the arithmetic case.

The group $\mathbf{PO}_{n,1}$ is absolutely simple whenever $n\neq 3$, therefore if $\Gamma$ is a quasi-arithmetic lattice in $G=\mathbf{PO}_{n,1}(\mathbb{R})$, the field of definition $k$ is totally real, the group of definition $\mathbf{G}$ is admissible.

For $n=3$ we have the following sequence of $\CC$-isomorphisms of complex algebraic groups: $\mathbf{PSO}_{3,1}\cong \mathbf{PSO}_4 \cong \mathbf{PSO}_3 \times \mathbf{PSO}_3$ and therefore $\mathbf{PSO}_{3,1}$ is $\mathbb{R}$-simple but not absolutely simple.
However, the group $\mathbf{PSO}_{3,1}$ is isomorphic over $\mathbb{R}$ to $\mathrm{Res}_{\mathbb{C}/\mathbb{R}}\,\mathbf{PGL}_2$, and $\mathbf{PGL}_2$ is indeed absolutely simple. This translates to the well known exceptional Lie group isomorphism $\mathrm{PSO}_{3,1}(\mathbb{R})\cong \mathrm{PSL}_2(\mathbb{C})$. Due to this, there is a classical description of arithmetic lattices acting on $\mathbb{H}^3$ as arithmetic subgroups of $\mathbf{PGL}_2(\CC)$. We shall discuss the types of arithmetic lattices and some of their specific properties in more detail in Section \ref{sec:types-arithmetic-lattices}. 

From the general classification of semi-simple algebraic groups by Tits~\cite{Tits66}, it follows that there exist three distinct types of (quasi)-arithmetic groups in $\PO_{n,1}$: 
\begin{itemize}
    \item type I, associated with quadratic forms (in all dimensions $n\geq 2$),
    \item type II, associated with unitary groups of skew-Hermitian forms with coefficients in quaternion algebras (in odd dimensions $n\geq 3$),
    \item and type III, related to the exceptional isomorphism in dimension $n=3$ (discussed above) or to the triality phenomenon in dimension $n=7$.
\end{itemize}

As for adjoint trace fields and fields of definition of quasi-arithmetic lattices, the following fact is widely used, although we did not find it in the literature. We provide a simple argument for the reader's convenience.

\begin{prop}\label{prop:adj-trace-of-QA}
    Let $\Gamma < \G(k)$ be a quasi-arithmetic lattice acting on $\mathbb{H}^n$, as in Definition~\ref{def:QA}. If $k$ is totally real then $k$ is the adjoint trace field of $\Gamma$ and $\G$ is its real ambient group. If $n=3$ and $k$ has one complex place, then $k$ is the invariant trace field of $\Gamma$ and $\G$ is its complex ambient group. 
\end{prop}
\begin{proof}
Let $k'$ be the adjoint trace field of $\Gamma < \G(k)$, and $\G'$ be its ambient $k'$-group. Denote by $\Gamma' \cong \Gamma$ the isomorphic image of $\Gamma$ such that $\Gamma' < \G'(k')$. The field $k$ is also a field of definition for $\Gamma$, so Vinberg's Theorem~\ref{teo:Vinberg-invariants} implies $k' \subset k$. 

Now we argue by contradiction, assuming that $k'$ is a proper subfield of $k$. If $k$ is totally real, than both groups $\mathbf{G}'$ and $\mathbf{G}$ are forms of $\mathbf{PO}_{n,1}$ and are therefore isomorphic over $\mathbb{R}$. We claim that they are isomorphic over $k$ (i.e., that the group $\mathrm{Ext}_{k/k'}\,\mathbf{G}'$ is $k$-isomorphic to $\mathbf{G}$). Indeed, the $\mathbb{R}$-isomorphism $\phi: \mathbf{G}'\rightarrow \mathbf{G}$ can be chosen so that it maps $\Gamma'<\mathbf{G}'(k')<\mathbf{G}'(k)$ to $\Gamma<\mathbf{G}(k)$. Therefore, $\phi$ commutes with the action of the absolute Galois group $\mathcal{G}=\mathrm{Gal}(\mathbb{C}/k)$ on $\Gamma$. Since $\Gamma'$ is Zariski-dense in $\mathbf{G}'$, we have that $\phi$ commutes with the $\mathcal{G}$-action on all of $\mathbf{G}'$ (i.e. $\phi$ is $\mathcal{G}$-equivariant), and thus gives rise to a $k$-defined isomorphism $\phi: \mathrm{Ext}_{k/k'}\,\mathbf{G}' \to \G$.

Now, let $\sigma:k\rightarrow \mathbb{C}$ be any non-trivial field embedding of $k$ that restricts to the identity on $k'$. We have that $\mathbf{G}^{\sigma}(\mathbb{R})\cong \mathbf{G}(\mathbb{R})$ is non-compact, contradicting the admissibility of $\mathbf{G}$.   

If $k\subset \mathbb{C}$ but $k \not\subset \R$, then $\mathbf{G}$ and $\mathbf{G}'$ are forms of $\mathbf{PGL}_2$ and are therefore isomorphic over $\mathbb{C}$. The same argument used in the totally real case shows that $\mathbf{G}$ and $\mathbf{G}'$ are isomorphic over $k$. Since $k$ has one pair of complex conjugate embeddings and $k'$ is a proper subfield of $k$, we see that $k'$ is totally real. It follows that $\Gamma<\mathbf{G}'(k')$ is at the same time a lattice in $\mathbf{PGL}_2(\mathbb{C})$ and a subgroup of $\mathbf{G}'(\mathbb{R})$ which, being the group of $\mathbb{R}$-points of an $\mathbb{R}$-form of $\mathbf{PGL}_2$, is either isomorphic to $\mathbf{PGL}_2(\mathbb{R})\cong \mathbf{SO}_{2,1}(\mathbb{R})$ or to the compact group $\mathbf{PSU}_2(\mathbb{R})\cong \mathbf{SO}_3(\mathbb{R})$. In both cases we see that $\Gamma$ is contained in a proper algebraic subgroup of $\mathrm{Res}_{\mathbb{C}/\mathbb{R}}\,\mathbf{PGL}_2 \cong \mathbf{PSO}_{3,1}$, which gives us a contradiction with Borel's Density Theorem.
\end{proof}

\noindent Proposition \ref{prop:adj-trace-of-QA} shows that if $\Gamma' < G$ is commensurable with some quasi-arithmetic lattice $\Gamma < \G(k)$, then $\Gamma'$ is also a subgroup of $\G(k)$ and therefore quasi-arithmetici\-ty is an invariant property of commensurability classes of hyperbolic lattices.

Let us stress the fact that the notion of quasi-arithmeticity is distinct from the usual arithmeticity only for Lie groups of real rank $1$, due to Margulis' Arithmeticity Theorem \cite{Mar84}. Moreover, the superrigidity results of Corlette~\cite{Cor92} and Gromov--Schoen \cite{GS92} for $\mathrm{F}_4^{-20}$ and $\mathrm{PSp}_{n,1}$ together with the fact that for lattices in $\mathrm{PU}_{n,1}$, $n\ge 2$, traces are always integral (as proven in \cite[Theorem 1.5]{BFMS23} or \cite[Theorem 1.3.1]{BU23} based on work of Esnault--Groechenig \cite[Theorem~1.1]{EG18}) imply that, up to isogeny, the groups $\PO_{n,1}$ are the only non-compact semi-simple real Lie groups containing properly quasi-lattices lattices. Vinberg \cite{Vin67} constructed first properly quasi-arithmetic lattices in $\Isom(\HH^n)$ for low dimensions $2 \le n \le 5$ using reflection groups. On the other hand, Agol's construction \cite{Ag06}, extended by Belolipetsky--Thomson \cite{BT11} and Bergeron--Haglund--Wise \cite{BHW11}, yields properly quasi-arithmetic lattices in $\Isom(\HH^n)$ for all $n \ge 2$, as observed by Thomson \cite{Tho16}.

Furthermore, a lattice $\Gamma < \G(\R)$ is called \textit{pseudo-arithmetic} over $K/k$ if $\Gamma < \G(K)$, $\G$ is admissible over $k$, and $K$ is a multiquadratic extension of $k$ (i.e. $K = k(\sqrt{a_1}, \ldots, \sqrt{a_m})$, for some $a_1, \ldots, a_m \in k$). All the currently known examples of lattices in  $\mathrm{PO}_{n,1}$ for $n>3$ are pseudo-arithmetic~\cite{EM20}.

\subsubsection{Totally geodesic subspaces}

Let $M=\mathbb{H}^n/\Gamma$ and $N=\mathbb{H}^m/\Lambda$, $m < n$, be finite-volume hyperbolic orbifolds. A map $i:N \rightarrow M$ is a {\itshape totally geodesic immersion} if any of its lifts $\widetilde{i}:\mathbb{H}^m \rightarrow \mathbb{H}^n$ maps $\mathbb{H}^m$ isometrically to an $m$-dimensional totally geodesic subspace $U\subseteq \mathbb{H}^n$. Equivalently, there is an $m$-dimensional totally geodesic subspace $U\subseteq \mathbb{H}^n$ such that its stabiliser $\mathrm{Stab}_{\Gamma}(U)< \mathbf{PO}_{m,1}(\R) \times \mathbf{O}_{n-m}(\R)$ acts as lattice on $U$, and its projection to $\mathbf{PO}_{m,1}(\R)$ is conjugate to $\Lambda$. In this setting, we say that $N$ is a {\itshape totally geodesic subspace} of $M$, and that $\Lambda$ is a {\itshape totally geodesic sublattice} of $\Gamma$.

If the map $i:N \rightarrow M$ above is an embedding (i.e.\ all the lifts of $N$ to $\mathbb{H}^n$ are pairwise disjoint) we say that $i$ is a {\itshape totally geodesic embedding},  $N$ is a {\itshape totally geodesic embedded subspace} of $M$, and that $\Lambda$ {\em embeds geodesically} in $\Gamma$.

\section{Arithmetic hyperbolic lattices and involutions}\label{sec:types-arithmetic-lattices}
\subsection{Arithmetic lattices of type I}\label{sec:type1}

Arithmetic lattices of type I in $\HH^n$ are also called arithmetic lattices of simplest type, and correspond to the following indices in Tits' classification \cite{Tits66}: $B_{m}$, if $n$ is even, and either ${^1}D^{(1)}_{m}$ or ${^2}D^{(1)}_m$, if $n = 2m-1$ is odd. Here ${^1}D^{(1)}_{m}$ can only occur if the field of definition is $\mathbb{Q}$ and $m$ is odd \cite[Proposition 13.6]{Emery}. 

\subsubsection{Admissible quadratic forms}

Let $k \subset \R$ be a totally real number field with the ring of integers $\OOO$. Let $f$ be a quadratic form defined over $k$. We say that $f$ is {\itshape admissible} if it has signature $(n, 1)$ and for any non-identity field embedding $\sigma: k \rightarrow \mathbb{R}$ the form $f^{\sigma}$ is positive definite.
The fact that $f$ is admissible as defined above clearly implies that the $k$-defined algebraic group $\mathbf{O}_f$ is admissible for $\mathrm{O}_{n,1}$. It follows that the group $\mathrm{PO}_f(\mathcal{O})$ is an arithmetic lattice in $\mathrm{PO}_{n,1}$ (see \cite{BHC62}).

A quadratic form $f$ defined over $k$ is called \textit{anisotropic} if it does \textit{not} represent $0$ over $k$. Otherwise $f$ is called \textit{isotropic}. It is easy to see that any admissible quadratic form $f$ defined over $k \neq \mathbb{Q}$ is anisotropic. Meyer's theorem \cite[Chapter 3.2, Corollary 2]{Serre} implies that any quadratic form defined over $\mathbb{Q}$ of signature $(n, 1)$ with $n\geq 4$ is isotropic. 

\begin{definition}\label{def:type-I}
Any group $\Gamma$ obtained as $\mathrm{PO}_f(\mathcal{O})$ in the way described above and any subgroup of $\mathrm{PO}_{n,1}\cong\mathbf{PO}_f (\mathbb{R})$ commensurable to such a group in the wide sense, is called an arithmetic lattice of simplest type, or \textit{an arithmetic lattice of type I}. The field $k$ is called the \textit{field of definition} of $\Gamma$. A hyperbolic orbifold $M=\mathbb{H}^n/\Gamma$ is said to be arithmetic of type~I if the group $\Gamma$ is an arithmetic lattice of type~I.

In the same setting, if $\Gamma<\mathrm{PO}_{n,1}$ is a lattice commensurable in the wide sense with a subgroup of $\mathbf{PO}_f(k)$ we say that $\Gamma$ and the orbifold $M=\mathbb{H}^{n}/\Gamma$ are \textit{quasi-arithmetic of type~I}.
\end{definition}

\begin{remark}
Given a totally real field $k$, two admissible, $k$-defined quadratic forms $f_1$ and $f_2$ define the same commensurability class of arithmetic hyperbolic lattices if and only if $f_1$ is equivalent over $k$ to $\lambda \cdot f_2$ for some $\lambda \in k^\times$. 
\end{remark}

Godement's compactness criterion \cite{God62} implies that $\Gamma$ is uniform if and only if $f$ is anisotropic. Thus, for any totally real number field $k \neq \Q$, the resulting orbifold $\HH^n/\Gamma$ is compact. If $k = \Q$, then the orbifold $\HH^n/\Gamma$ is compact only if the quadratic form $f$ does \textit{not} rationally represent $0$. This can only happen if $f$ has signature $(n,1)$ with $n \leq 3$, by Meyer's theorem. 

\begin{remark}
The classification of semi-simple algebraic groups by Tits~\cite{Tits66} implies that all arithmetic lattices acting on $\mathbb{H}^{2n}$, $n\geq 1$, are of simplest type. The same applies to all non-uniform arithmetic lattices acting on $\mathbb{H}^{n}$, for $n\geq 2$. 
\end{remark}

\subsubsection{Constructing $k$-involutions}\label{sec:k-involutions-type1} We wish to build examples of fc-subspa\-ces associated to a single involution in $\mathrm{Comm}(\Gamma)$, where $\Gamma<\mathrm{Isom}(\mathbb{H}^n)$ is an arithmetic hyperbolic lattice. If $k$ is the adjoint trace field of $\Gamma$ and $\mathbf{G}$ is its ($k$-defined, admissible) ambient group, then $\mathrm{Comm}(\Gamma)=\mathbf{G}(k)$, so we are left with the task of analysing the fixed-point set of involutions in $\mathbf{G}(k)$.

Here we examine the case of type-I lattices: $\mathbf{G}=\mathbf{PO}_f$, where $f$ is an admissible Lorentzian quadratic form of signature $(n,1)$ over a totally real field $k$.

As a first step, we characterize all involutions in the group of $k$-points $\mathbf{O}_f(k)$.
Let us fix a subspace $V_1 \subset k^{n+1}$ of dimension $m+1$ such that $f_{|V_1}$ has signature $(m, 1)$, and let $V_1^{\perp} = V_{-1}$ be its orthogonal complement with respect to the form $f$. Clearly $k^{n+1}$ decomposes as a direct sum of $V_1$ and $V_{-1}$. Let $N$ denote the linear transformation which acts as the identity on $V_1$ and as multiplication by $-1$ on $V_{-1}$. It is easy to verify that $N$ belongs to $\mathrm{O}_f(k)$ and that $N^2 = \mathrm{id}$.

Also, all order-$2$ elements in $\mathrm{O}_f(k)$ arise through the above construction. Suppose that $N \in \mathrm{O}_f(k)$ has order $2$. Then $N$ has eigenvalues $1$ and $-1$ and the corresponding eigenspaces $V_1,V_{-1} \subset k^{n+1}$ are orthogonal with respect to the form $f$. Up to multiplication by $\pm \mathrm{id}$, we may assume that the eigenspace $V_1$ intersects the upper sheet $\mathbb{H}^n$ of the hyperboloid. In this case, the restriction $f_{\mid V_1}$ of $f$ to $V_1$ will be an admissible Lorentzian quadratic form of signature $(m,1)$, where $\mathrm{dim}(V_1)=m+1$. If $\Gamma<\mathbf{PO}_f$ is an arithmetic lattice (i.e.\ $\Gamma$ is commensurable with $\mathrm{PO}_f(\mathcal{O})$) and $i=[N]\in \mathbf{PO}_f(k)$ is an involution with $N \in \mathrm{O}_f(k)$ as above, then $\mathrm{Stab}_{\Gamma}(V_1)$ corresponds to a totally geodesic type-I arithmetic subspace of $\mathbb{H}^n/\Gamma$ associated to the admissible $k$-form $g=f_{|V_1}$. 

If $n$ is even, then $\mathbf{PO}_f$ is $k$-isomorphic to $\mathbf{SO}_f=\mathbf{O}_f \cap \mathbf{SL}_{n+1}$, and therefore the latter is adjoint. The proof of \cite[Lemma~4.2]{ERT} then shows that all $k$-involutions in $\mathbf{PO}_f(k)$ correspond to involutions in $\mathrm{PO}_f(k)$.  For odd $n\geq 3$, there may be involutions in $\mathbf{PO}_f(k)$ that do not belong to $\mathrm{PO}_f(k)$. Due to \cite[Lemma~4.3]{ERT}, we have a complete description of all involutions in this case, too. 

Let us denote by $A$ the matrix which represents the form $f$ with respect to the standard basis of $k^{n+1}$, and define the {\itshape general orthogonal group} of the form $f$ as $$\mathbf{GO}_f = \{ M \in \mathrm{GL}_{n+1}(\mathbb{C}) \,|\, M^{t} A M = \mu \cdot A, \text{ for some } \mu \in \mathbb{C}\}.$$ An involution $N \in \mathbf{PO}_f(k)$ can be represented by a linear transformation $M \in \mathbf{GO}_f(k)$ such that $M^2 = \mu \cdot \mathrm{id}$. Here, $M$ is defined up to multiplication by $\lambda \in k^{\times}$. Notice that we always have $0 < \mu \in k^\times$. Moreover, $\mu$ is totally positive in the sense that $\sigma(\mu)$ is positive for all field embeddings $\sigma:k \rightarrow \mathbb{R}$.  

Since we consider the projective model of $\mathbb{H}^n$, we may identify such $N \in \mathbf{PO}_f(k)$ directly with $M$, as they describe the same projective transformation with the same (projective) fixed point set. The matrix $M$ has eigenvalues $\pm \sqrt{\mu}$. The eigenspace $V_{\sqrt{\mu}}$ for the eigenvalue $\sqrt{\mu}$ of $M$ is called the \textit{positive eigenspace}. Similarly, the eigenspace $V_{-\sqrt{\mu}}$ for the eigenvalue $-\sqrt{\mu}$ of $M$ is called the \textit{negative eigenspace}. 

Let $K$ denote the field $k(\sqrt{\mu})$, and let us assume that the number $\mu$ is not a square, so that $[K:k]=2$. Notice that $K$ is totally real due to the fact that $\mu$ is totally positive. The positive and negative eigenspaces are defined over $K$, in the sense that they are described as the set of solutions of a homogeneous linear system of equations with coefficients in $K$. Up to multiplication by $-\mathrm{id}$, we can assume that the restriction $g$ of the form $f$ to the positive eigenspace has signature $(m,1)$, for some $m>0$, while the restriction $h$ of $f$ to the negative eigenspace is positive definite. Notice that the forms $g$ and $h$ are also defined over $K$.

We now claim that the form $g$ is admissible, and that if $\Gamma$ is an arithmetic lattice in $\mathbf{O}_f(\mathbb{R})$, the stabiliser in $\Gamma$ of the positive eigenspace projects to an arithmetic lattice in $\mathbf{O}_g(\mathbb{R})$. The claim follows by studying the restriction of scalars $\mathrm{Res}_{K/k}\,\mathbf{O}_g$ of the orthogonal group $\mathbf{O}_g$, and proving that it is isomorphic to the centraliser $\mathbf{H}$ of $N$ in $\mathbf{O}_f$. Notice that that $\mathbf{H}$ is the fixed point set of the $k$-automorphism of $\mathbf{O}_f$ given by conjugation by $N$, and so is indeed a $k$-subgroup of $\mathbf{O}_f$. An element of $\mathbf{O}_f$ commutes with $N$ if and only if it preserves the positive and negative eigenspaces, and therefore $\mathbf{H}$ is $K$-isomorphic to $\mathbf{O}_g \times \mathbf{O}_h$. By the discussion in Section \ref{sec:restriction of scalars} we have the following $K$-defined isomorphism:
\begin{equation*}
    \mathrm{Res}_{K/k}\,\mathbf{O}_g\cong \mathbf{O}_g \times \mathbf{O}_{g^{\sigma}},
\end{equation*}
where $g^{\sigma}$ is the form obtained by applying the non-trivial automorphism $\sigma \in \mathrm{Gal}(K/k) \cong \mathbb{Z}_2$ to the coefficients of $g$. We note that $g$ is defined as the restriction of the form $f$ to the positive eigenspace for $N \in \mathbf{GL}_{n+1}(K)$. The Galois automorphism $\sigma$ sends $N=1/\sqrt{\mu} M$ to $N^{\sigma}=-1/\sqrt{\mu} M=-N$, therefore exchanging the positive and negative eigenspaces. Since the form $f$ is defined over $k$, we have that $f^{\sigma}=f$. We may thus identify the form $g^{\sigma}$ with the (positive definite) restriction $h$ of the form $f$ to the negative eigenspace of $N$ so that the group $\mathbf{O}_{g^{\sigma}}$ is $K$-isomorphic to $\mathbf{O}_h$. We remark that the discussion above implies that the positive and negative eigenspaces of $N$ have the same dimension, equal to $(n+1)/2$.

We have thus established that 
\begin{equation}\label{eq:type-I-centraliser}
    \mathrm{Res}_{K/k}\,\mathbf{O}_g\cong \mathbf{O}_g \times \mathbf{O}_h \cong \mathbf{H}
\end{equation} where all isomorphisms are defined over $K$.
Now note that the group of $k$-points $\mathbf{H}_k=\mathbf{GL}_{n+1}(k)\cap \mathbf{H}$ corresponds to pairs of the form $(M,M^{\sigma})$, where $M \in \mathbf{O}_g(K)$ and $M^{\sigma}\in \mathbf{O}_h(K)$ is obtained from $M$ by applying to all of its entries the Galois automorphism $\sigma$. These are precisely the $k$-points of $\mathrm{Res}_{K/k}\,\mathbf{O}_g$. As such the isomorphism \eqref{eq:type-I-centraliser} induces a one-to-one correspondence between the Zariski dense subgroups $\mathrm{Res}_{K/k}\,\mathbf{O}_g(k)$ and $\mathbf{H}_k$, implying that 
$\mathrm{Res}_{K/k}\,\mathbf{O}_g$ and $\mathbf{H}$ are $k$-isomorphic.

Therefore if $\Gamma < \mathbf{O}_f(k)$ is an arithmetic lattice, then $$\mathrm{Stab}_{\Gamma}\, V_1 = \Gamma \cap \mathbf{H}_k$$ is an arithmetic subgroup of $\mathbf{O}_g(\mathbb{R})\times \mathbf{O}_h(\mathbb{R})$. By modding out the compact factor $\mathbf{O}_h(\mathbb{R})$ we see that the projection $\Lambda$ of $\mathrm{Stab}_{\Gamma}\, V_1$ to $\mathbf{O}_g(\mathbb{R})$ is an arithmetic subgroup of $\mathbf{O}_g(\mathbb{R})$.

Finally, the admissibility of $g$ follows easily from the admissibility of~$f$. Indeed, for each non-identity embedding $\eta: k \rightarrow \mathbb{R}$, the group $(\mathrm{Res}_{K/k}\,\mathbf{O}_g)^{\eta}(\R)$ decomposes as the product of the orthogonal groups of the restrictions of the positive definite form $f^{\eta}$ to the positive and negative eigenspaces for $N^{\eta}$, and therefore all the resulting factors are compact. We thus see that the involution $[N] \in \mathbf{PO}_f(k)$ yields a type-I arithmetic fc-subspace of dimension $(n-1)/2$ with adjoint trace field $K$ and ambient group $\mathbf{PO}_g$.

We conclude this subsection with a remark that might have applications.

\begin{remark}
    The above discussion combined with the work \cite{KRS} shows that all odd-dimensional type-I arithmetic hyperbolic lattices $\Gamma < \mathrm{PO}_{2m+1,1}$ {\em generated by involutions with fixed point set of dimension $\ne m$} embed geodesically as codimension-one sublattices {\em without passing to a finite-index subgroup}. More generally, this holds true whenever $\Gamma$ lies in the image $\mathrm{PO}_f(k)$ of $\mathbf{O}_f(k)$ inside $\mathbf{PO}_f(k)$, see \cite[Propositions 2.1 and 4.1]{KRS}.
\end{remark}

\subsection{Arithmetic lattices of type II}\label{sec:type2}
\subsubsection{Quaternion algebras} Let $k$ be a field of characteristic $\neq 2$. A quaternion algebra over $k$ is a $4$-dimensional central simple algebra $D$. Any quaternion algebra is isomorphic to the algebra $D(a, b)$ with $\I^2 = a$, $\J^2 = b$, and $\I \J = -\J \I = \K$, for some choice of non-zero $a, b \in k$. These relations imply that $\K^2 = -ab$. Over any field $k$, the quaternion algebra $D(1,1)$ is isomorphic to the algebra $M_2(k)$ of $2 \times 2$ matrices with coefficients in $k$ through the following homomorphism:
\begin{eqnarray*}
1 \mapsto 
\begin{pmatrix}
1 & 0\\
0 & 1
\end{pmatrix},\; &
\I \mapsto 
\begin{pmatrix}
1 & 0\\
0 & -1
\end{pmatrix},\; &
\J \mapsto 
\begin{pmatrix}
0 & 1\\
1 & 0
\end{pmatrix},\; \
\K \mapsto 
\begin{pmatrix}
0 & 1\\
-1 & 0
\end{pmatrix}.
\end{eqnarray*}

As a corollary of Wedderburn's structure theorem \cite[Theorem 2.9.6]{Mac-Reid}, any quaternion algebra $D$ over $k$ is either isomorphic to the algebra $M_2(k)$ of $2 \times 2$ matrices (which always has zero divisors), or it is a division algebra (i.e. any non-zero element has a multiplicative inverse). 

In any quaternion algebra $D$ there is an involutory anti-automorphism $q \mapsto q^*$ (called the standard involution) whose fixed-point set coincides with the field $k$. It is obtained by multiplying $\I$, $\J$ and $\K$ by $-1$ and satisfies $(pq)^* = q^* p^*$, for all $p, q \in D$. If $D \cong M_2(k)$, the standard involution can be written as
\begin{equation}\label{eq:standard-involution-split}
  \begin{pmatrix}
    a & b \\
    c & d
    \end{pmatrix}
    \mapsto
   \begin{pmatrix}
    d & -b \\
    -c & a
    \end{pmatrix}. 
\end{equation}

For any element $q \in D$, its norm is defined as $N(q)=q q^* \in k$ and its trace is defined as $\mathrm{Tr}(q)=q+q^* \in k$. If $D\cong M_2(k)$, the norm and trace of an element are respectively the determinant and trace of the corresponding matrix. The set of invertible quaternions (i.e.\ the set of quaternions with non-zero norm) forms a multiplicative group which we denote by $\mathrm{GL}_D$, and this is the group of $k$-points of the algebraic $k$-group $\mathbf{GL}_D$, which is a $k$-form of $\mathbf{GL}_2$. Similarly, the set of quaternions $q \in D$ with unit norm forms a multiplicative group which we denote by $\mathrm{SL}_D$, and this is the group of $k$-points of the algebraic $k$-group $\mathbf{SL}_D$, which is a $k$-form of $\mathbf{SL}_2$. If $D \cong M_2(k)$, then we have $\mathrm{GL}_D = \mathbf{GL}_D(k)\cong \mathbf{GL}_2(k)$ and $\mathrm{SL}_D = \mathbf{SL}_D(k) \cong \mathbf{SL}_2(k)$.

Now, let $k$ be an algebraic number field, $\OOO$ its ring of integers and $D$ a quaternion algebra over $k$. An order in $D$ is an $\OOO$-submodule $O\subset D$ such that $O$ is a subring containing $1$ which generates $D$ over $k$. The subring $M_2(\OOO)$ is always an order in $M_2(k)$. If $a, b \in \OOO$, the quaternions with coefficients in $\mathcal{O}$ always form an order in $D(a, b)$. Given an order $O \subset D$, its group of units is the group $\mathrm{SL}_D(O)= \mathrm{SL}_D \cap O$. The groups of units of any two orders in a quaternion algebra $D$ are commensurable.

\subsubsection{Arithmetic hyperbolic lattices from quaternion algebras}\label{sec:unitary-lattices}

Let $D$ be a quaternion algebra over the field $k$ with the ring of integers $\OOO$, and let
$$
F(x,y) = \sum^m_{i,j = 1} x^*_i\, a_{ij}\, y_j,\; (a_{ij}\in D, a_{ij}= -a^*_{ji})
$$
be a non-degenerate skew-Hermitian form on the right $D$-module $D^m$. Let $\mathrm{U}_F(D)$ denote the group of automorphisms of $D^m$ preserving the form $F$. This is the group $\mathbf{U}_F(k)$ of $k$-points of the algebraic group $\mathbf{U}_F$. If $k$ is an algebraic number field and $O$ is some order in $D$, let $\mathrm{U}_F(O)$ denote the subgroup of $\mathrm{U}_F(D)$ preserving the $\OOO$-lattice $O^m$.

If $D \cong M_2(k)$, then $\mathbf{U}_F$ is $k$-isomorphic to the orthogonal group $\mathbf{O}_f$ of a $k$-defined form $f$ of rank $2m$. Indeed, $D^m$ is a $4m$-dimensional vector space over $k$. Assuming that 
$D = D(1, 1)$, we set
$$
D^m_{\pm} = \{x\in D^m \mid x\, \I = \pm x\}.
$$
Then $D^m = D^m_+ \oplus D^m_-$ and $D^m_- = D^m_+\, \J$, so that $\dim D^m_+ = \dim D^m_- = 2m$.
One can notice that, if $x, y \in D_+^m$, then 
$$
F(x,y) = f(x,y)\, (\I-1)\, \J,
$$
where $f$ is a symmetric non-degenerate $k$-bilinear form on $D^m_+$. Indeed, for all $x, y \in D^m_+$ we have that 
$$
0 = F(x,y(\I-1)) = F(x,y) (\I-1), \text{ and } 0 = F(x(\I-1),y) = (-\I-1)F(x,y).
$$ 
By a straightforward computation, the only quaternions $q \in D$ that satisfy~$q(\I-1)=(-\I-1)q=\!0$ are precisely those of the form $\lambda(\I-1)\J$ for some scalar $\lambda \in k$ (the latter depending on $x, y \in D^m_+$). The fact that the form $f$ is symmetric follows from $F$ being skew-Hermitian.

Notice that for every $A \in \mathrm{U}_F(D)$ we have that $A(D^m_+)=D^m_+$. Indeed, for every $x \in D^m_+$ we have $(Ax)(\I-1)=A(x(\I-1))=A(0)=0$. Moreover $A:D^m_+\rightarrow D^m_+$ belongs to $\mathbf{O}_f(k)$. Indeed for every $x,y \in D^m_+$ we have
$$f(x,y)(\I-1)\J=F(x,y)=F(Ax,Ay)=f(Ax,Ay)(\I-1)\J$$ and since both $f(x,y)$ and $f(Ax,Ay)$ are elements of $k$ we have that they coincide.

By the discussion above we see that the map $A \mapsto A|_{D^m_+}$ defines an isomorphism $$\Phi \colon \mathbf{U}_F(k)=\mathrm{U}_F(D) \to \mathbf{O}_f(k),$$ where the form $f$ is determined by $F$ up to a scalar. The surjectivity can be easily established by noticing that every $B\in \mathbf{O}_f(k)$ extends uniquely to an element $A \in \mathrm{U}_F(D)$ by setting $A(x+y\J)=Bx+(By)\J$ for every $x,y \in D^m_+$. For $k = \R$, let us define the signature of $F$ to be the signature of $f$. 

Now let $k \subset \R$ be a totally real number field and $D$ be a quaternion algebra such that $D^\sigma \otimes \R \cong M_2(\R)$ for all embeddings $\sigma \colon k \hookrightarrow \R$. A skew-hermitian $D$-form $F$ is \textit{admissible} if, regarded as a form on $(D\otimes\R)^m$, it has signature $(2m-1, 1)$ while $F^\sigma$ has signature $(2m, 0)$ for all non-identity embeddings $\sigma \colon k \hookrightarrow \R$. Given an admissible form $F$ we have that $$\mathrm{U}_F (D \otimes \R)=\mathbf{U}_F({\mathbb{R}}) \cong \mathbf{O}_f({\R}) \cong \OO_{2m-1,1},$$ and $\mathrm{U}_F(D)$ is naturally identified with a subgroup of $\mathrm{O}_{2m-1,1}$.

Thus, for every order $O$ in $D$, we have that the projection $\mathrm{PU}_F(O)$ of $\mathrm{U}_F(O)<\mathbf{O}_f({\R})$ to $\mathbf{PO}_f({\R}) \cong \mathrm{PO}_{2m-1,1}$  is an arithmetic lattice in $\Isom(\HH^{2m-1})$. 

When $D \cong M_2(k)$, we have that $\mathrm{U}_F(D)$ is $k$-isomorphic to $\mathrm{O}_f(k)$, with $f$ being an admissible $k$-defined form. Thus we do not obtain anything new: in this case all arithmetic lattices in $\mathbf{U}_F(k)$ yield type-I lattices.

If $D$ is a division algebra we obtain a new class of hyperbolic arithmetic lattices.

\begin{definition}\label{def:type-2-lattice}
Let $F$ be an admissible skew-Hermitian form defined over a division quaternion algebra $D$ over $k$, and $O$ be an order in $D$.
All subgroups $\Gamma < \mathrm{PO}_{2m-1, 1}$ which are commensurable in the wide sense with a group of the form $\mathrm{PU}_F(O)$ as above are called \textit{arithmetic lattices of type~II}. A hyperbolic orbifold $M=\mathbb{H}^{2m-1}/\Gamma$ is a {\em type-II orbifold} if $\Gamma$ is an arithmetic lattice of type~II.

In the same setting, if $\Gamma<\mathrm{PO}_{2m-1,1}$ is a lattice commensurable in the wide sense with a subgroup of $\mathbf{PU}_F(k)$, we say that $\Gamma$ (and the orbifold $M=\mathbb{H}^{n}/\Gamma$) are \textit{quasi-arithmetic of type~II}.
\end{definition}

\begin{remark}
Given a totally real field $k$ and a division algebra $D$ over $k$ satisfying the hypotheses above, two admissible skew-Hermitian forms $F_1$ and $F_2$ on $D^m$ define the same commensurability class of hyperbolic lattices if and only if $F_1$ is equivalent (up to change of basis in $D^m$) to $\lambda \cdot F_2$ for some $\lambda \in k^\times$. 
\end{remark}

\begin{remark}
If $k = \Q$ and $F$ is isotropic over $D$ then the orbifold $\HH^{2m-1}/\Gamma$ is non-compact, otherwise $\Gamma$ is a uniform lattice, as follows from Godement's criterion \cite{God62}. 
\end{remark}
Also, in the non-uniform case the quaternion algebra $D$ is necessarily isomorphic to $M_2(\Q)$, so that $\Gamma$ is a type-I lattice. More generally, we prove the following.
\begin{prop}\label{prop:non-isotropic-over-division-algebra}
If $k\subset \mathbb{R}$ is a real algebraic number field, $D$ is a quaternion division algebra over $k$ which splits over $\R$ and $F$ is a skew-Hermitian form on the right $D$-module $D^m$ of signature $(2m-1,1)$, then $F$ cannot represent $0$ over $D$.
\end{prop}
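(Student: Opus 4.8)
The plan is to argue by contradiction, using crucially that $D$ is a \emph{division} algebra to build a totally isotropic subspace of the associated quadratic form that is too large to exist. Suppose $F$ represents $0$, say $F(x_0,x_0)=0$ with $x_0\in D^m\setminus\{0\}$. Since $D$ has no zero divisors, the annihilator of $x_0$ in $D$ is trivial, so the right submodule $x_0 D\subseteq D^m$ is free of rank $1$ on the generator $x_0$; in particular $\dim_k(x_0 D)=\dim_k D=4$. Moreover $x_0 D$ is totally isotropic for $F$, because $F(x_0 q,x_0 q')=q^{*}F(x_0,x_0)q'=0$ for all $q,q'\in D$. This is exactly the point that breaks down for $D\cong M_2(k)$: there an isotropic vector can generate a submodule of $k$--dimension only $2$, and indeed admissible skew--Hermitian forms over $M_2(k)$ do represent $0$ (they are essentially of type I, as recalled above).

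Next I would extend scalars to $\R$ along the given embedding $k\hookrightarrow\R$. As $D$ splits over $\R$, identify $D\otimes_k\R$ with the split quaternion algebra $D(1,1)$ over $\R$, so that $\I^2=\J^2=1$, and recall from the discussion above the decomposition $(D\otimes\R)^m=(D\otimes\R)^m_+\oplus(D\otimes\R)^m_-$ together with the identity $F_\R(u,v)=f(u,v)\,(\I-1)\J$ for $u,v\in(D\otimes\R)^m_+$, where $f$ is the symmetric $\R$--bilinear form whose signature is, by hypothesis, $(2m-1,1)$ (so $f$ is nondegenerate of rank $2m$). Since $k$ is a field, $-\otimes_k\R$ is exact, so $W:=(x_0 D)\otimes_k\R$ embeds in $(D^m)\otimes_k\R=(D\otimes\R)^m$ as a free rank--$1$ right $(D\otimes\R)$--submodule of $\R$--dimension $4$, and it remains totally isotropic for $F_\R$.

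Finally, put $e_+=\tfrac12(1+\I)$, a primitive idempotent of $D\otimes\R\cong M_2(\R)$; then $(D\otimes\R)^m_+=(D\otimes\R)^m e_+$, hence $W_+:=W\cap(D\otimes\R)^m_+=We_+$. Because $W$ is free of rank $1$ and $M_2(\R)e_+$ has $\R$--dimension $2$, the space $W_+$ has $\R$--dimension exactly $2$. For $u,v\in W_+$ we have $F_\R(u,v)=0$ by total isotropy of $W$, while $(\I-1)\J\neq0$ in $D\otimes\R$, so $f(u,v)=0$; thus $W_+$ is a $2$--dimensional totally isotropic subspace of the quadratic space $f$ of signature $(2m-1,1)$. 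But such a form has Witt index $1$ and therefore no totally isotropic subspace of dimension $\geq2$ --- a contradiction. Hence $F$ cannot represent $0$. The only delicate point is the dimension count for $W_+$, which is precisely where the division algebra hypothesis enters; everything else is routine bookkeeping with the correspondence $F\leftrightarrow f$ set up above.
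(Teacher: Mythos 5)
Your proof is correct, but it reaches the contradiction by a different mechanism than the paper. Both arguments extend scalars to $\R$, split $D\otimes\R\cong M_2(\R)$, and use the identity $F(x,y)=f(x,y)(\I-1)\J$ on the $+1$--eigenspace of right multiplication by $\I$. The paper works with a single isotropic vector $v=x+y\J$: expanding $F(v,v)=0$ in the basis $\{(\I-1)\J,\,\I-1,\,\I+1,\,(\I+1)\J\}$ gives $f(x,x)=f(x,y)=f(y,y)=0$, the Lorentzian signature forces $y=\mu x$, and a short case analysis on $\mu$ shows that the coordinates of $v$ are zero divisors in $D\otimes\R$, contradicting that $D$ is a division algebra. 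You instead invoke the division hypothesis at the outset, to see that the totally isotropic cyclic submodule $x_0D$ is free of rank one (the identity $F(x_0q,x_0q')=q^*F(x_0,x_0)q'$ doing the work), and then cut with the idempotent $e_+=\tfrac12(1+\I)$ to produce a $2$--dimensional totally isotropic subspace of $f$, impossible since a real form of signature $(2m-1,1)$ has Witt index $1$. Your route avoids the case analysis and is arguably more conceptual (it is essentially the Morita--type argument), at the cost of the module--theoretic bookkeeping $W\cap(D\otimes\R)^m_+=We_+$ and the dimension count, both of which you carry out correctly. One caveat: the parenthetical claim that admissible skew--Hermitian forms over $M_2(k)$ always represent $0$ is neither needed nor accurate over $k$ in general (it holds after extension to $\R$); for instance, an anisotropic admissible quadratic form over $\Q$ of signature $(3,1)$ yields, via the split algebra $M_2(\Q)$, an admissible anisotropic skew--Hermitian form. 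This side remark does not affect the validity of your argument.
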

\begin{proof}
Indeed, suppose that there exists a non-zero $v \in D^m$ such that $F(v, v)=0$. Let us consider the real quaternion algebra $D\otimes \R \cong M_2(\mathbb{R})$. Then $v = x + y\J$ for some $x, y \in (D\otimes \R)^m_+$, and 
\begin{equation}\label{eq:skew-hermitian-decomposition}
0 = F(v, v) = f(x, x)(\I-1)\J + f(x, y)(\I-1) + f(x, y)(\I+1) + f(y, y)(\I+1)\J.
\end{equation}

Note that $\{(\I-1)\J,\, \I-1,\, \I+1,\, (\I+1)\J\}$ is an $\R$-basis of $D\otimes \R$, and thus $f(x, x) = f(x, y) = f(y, y) = 0$. Since $f$ is a form of signature $(2m-1, 1)$, we have that $y = \mu x$ for some $\mu \in \R$, so that $v = x(1 + \mu \J)$.

If $\mu = \pm 1$, then we have $(1 + \mu \J)(1 - \mu \J) = 0$, and thus $v(1-\mu \J) = 0$, so that the coordinates of $v$ understood as elements of $(D\otimes \R)^m_+$ are zero divisors in $D\otimes \R$. Suppose that $q \in D$ is a non-zero coordinate of $v$. If $D$ were a division algebra, then $q$ would be invertible in $D$ and therefore also in $D\otimes \R$. This is a contradiction, since $D\otimes \R$ is a non-trivial associative algebra, and no element can be invertible and a zero divisor at the same time. If $\mu \neq \pm 1$, then $1 + \mu \J$ is invertible, since $N(1 + \mu \J)\neq 0$. Then $v(1 + \mu \J)^{-1}(\I-1)=x(\I-1)=0$, and again all the coordinates of $v$ are zero divisors. Thus, $D$ cannot be a division algebra.
\end{proof}

\begin{remark}
Let us note that in the classification of algebraic semi-simple groups by Tits \cite{Tits66}, type-II lattices as defined above belong to the indices ${^1}D^{(2)}_{m, 0}$ or ${^2}D^{(2)}_{m, 0}$, as also follows from Proposition~\ref{prop:non-isotropic-over-division-algebra} (see also \cite[Proposition 13.6]{Emery}). We would also like to mention the fact that the discussion in \cite[p.~220]{Vin-Geometry-II} apparently does not take into account type-II arithmetic lattices in dimension $3$ (cf. Section~\ref{sec:3-dim-arithmetic}).
\end{remark}

\subsubsection{Constructing $k$-involutions}\label{sec:k-involutions-type2}
Below we show how to construct all the $k$-involutions in any type II group of the form $\mathbf{PU}_F (k)$ where, as in the previous section, $D$ is a quaternion division algebra over $k$ and $F$ is an admissible skew-Hermitian form on $D^m$ of signature $(2m-1, 1)$.

Let $D_1$ be a proper right submodule of $D^m$. Since $D_1$ is a module over a division algebra, it has to be free, so that $D_1\cong D^l$ for some $l < m$. We can restrict the form $F$ to a  skew-Hermitian form $G$ on $D_1$. Let us suppose that $G$ is non-degenerate with signature $(2l-1, 1)$. Now, let us consider the orthogonal complement $$D_1^{\perp}=\{x \in D^m \,|\,F(x, y) = 0\, \text{ for all } y \in D_1\}.$$ 
Notice that since $F$ is skew-Hermitian, the derived orthogonality relation is symmetric.

We observe that $D^m = D_1 \oplus D_1^{\perp}$. Indeed, $D_1\cap D_1^{\perp}$ is trivial by Proposition~\ref{prop:non-isotropic-over-division-algebra}.

In order to prove that $D_1 + D_1^{\perp}=D^m$, we proceed as follows: first, fix a basis $(x_1, \dots, x_l)$ of $D_1$ and complete it to a basis $(x_1, \dots, x_m)$ of $D^m$. Next, apply the Gram--Schmidt orthogonalisation process by setting
\begin{small}
\begin{eqnarray*}
\mathrm{proj}_y(x)=yF(y, y)^{-1}F(y, x),& y_1=x_1, & y_i= x_i - \sum_{j=1}^{i-1}\mathrm{proj}_{y_j}(x_i),\; i=2,\dots,m.
\end{eqnarray*}    
\end{small}
Note that $\mathrm{proj}_y(x)$ is well-defined whenever $y\in D^m$ is non-zero. Indeed $F(y, y)$ is non-zero by Proposition \ref{prop:non-isotropic-over-division-algebra} and it is invertible since $D$ is a division algebra. Also, a straightforward check shows that $F(y, x - \mathrm{proj}_y(x)) = 0$ for any $x \in D^m$.

We thus obtain an orthogonal basis $(y_1, \dots, y_m)$ for $D^m$ such that the first $k$ vectors are a basis for $D_1$ and the remaining belong to $D_1^{\perp}$. This is clearly enough to conclude that $D_1 + D_1^{\perp}=D^m$, and moreover we see that $D_1^{\perp}\cong D^{m-l}$ and that the restriction of $F$ to $D_1^{\perp}$ has signature $(2(m-l), 0)$.

Finally, we notice that if $N$ is a linear transformation of $D^m$ that acts as the identity on $D_1$ and as multiplication by $-1$ on $D_1^{\perp}$, then $N$ is an involution in $\mathrm{U}_F(D)=\mathbf{U}_F (k)$. Moreover, the fixed point set for the action of $N$ on $\mathbb{H}^{2m-1}$ is precisely a totally geodesic subspace of dimension $2l-1$. 

It is not difficult to see that all the $k$-involutions of $\mathrm{U}_F(D)$ arise through the construction above. Indeed, let us suppose that $N \in \mathrm{U}_F(D)$ has order~$2$. Since $D$ is a division algebra, the eigenvalues of $N$ as an element of $\mathrm{GL}(D^m)$ can only be equal to $+1$ or $-1$. The corresponding right $D$-modules $D_1 = \{ x\in D^m \,|\, Nx = x \}$ and $D_{-1} = \{ x \in D^m \,|\, Nx = -x \}$ are orthogonal with respect to the form $F$ (i.e.\ $D_{-1} = D_1^{\perp}$). The involution $N$ corresponds to an element of $\mathrm{Isom}(\mathbb{H}^n)$ if and only if the restriction $F_{\mid {D_1}}$ of the form $F$ to $D_1$ has signature $(2l-1, 1)$ for $l = \mathrm{dim}\, D_1$. 

Finally, suppose that $i=[N]\in \mathrm{PU}_F(D)$, with $N$ as above, and that $\Gamma$ is a type-II arithmetic lattice in $\mathrm{PU}_F(D\otimes \mathbb{R})=\mathbf{PU}_F (\mathbb{R})$. By the discussion above, it follows that the fixed-point set of $i$ projects to a type-II totally geodesic subspace in $M=\mathbb{H}^n/\Gamma$ with associated admissible skew-hermitian $k$-defined form $G=F_{|D_1}$.

We still need some work to classify the involutions in the group $\mathbf{PU}_F (k)$ of $k$-points of the algebraic $k$-group $\mathbf{PU}_F$. The issue is the same as in the case of type-I lattices acting on $\mathbb{H}^n$ with $n$ odd (see Section~\ref{sec:k-involutions-type1}), namely that the group $\mathbf{U}_F$ is not adjoint.
Fortunately, the argument from \cite[Lemma 4.3]{ERT} with small modifications applies to this case as well.

Notice that $D\otimes \mathbb{C} \cong M_2(\mathbb{C})$ so that the elements of $\mathrm{U}(F, D \otimes \mathbb{C})=\mathbf{U}_F$ can be represented by elements of $\mathbf{GL}_{2m}(\mathbb{C})$.
We define the \textit{general unitary group} of the form $F$:
\begin{equation*}
    \mathbf{GU}_F=\{B \in \mathbf{GL}_{2m}(\mathbb{C}) \,|\, B^* F B = \mu F\; \mathrm{for\; some}\; \mu \in \mathbb{C}\},
\end{equation*} 
where $F$ is now understood as a $2m \times 2m$ matrix with complex coefficients. The matrix $B^*$ is obtained from $B$ by transposing the ($D\otimes \mathbb{C}$)-coefficients (each being represented by a $2 \times 2$ block) and then applying to each block the standard involution \eqref{eq:standard-involution-split}. The corresponding group of $k$-points is $$\mathbf{GU}_F (k) = \mathbf{GU}_F \cap \mathbf{GL}_{2m}(k).$$

Recall that by the argument of Section \ref{sec:unitary-lattices} there exists an isomorphism $$\Phi:\mathbf{U}_F (\mathbb{R}) = \mathrm{U}_F(D\otimes \R) \rightarrow \mathbf{O}_f(\mathbb{R}),$$ where $f$ is an admissible $k$-form of signature $(2m-1, 1)$. This isomorphism extends to an isomorphism $\Phi: \mathbf{GU}_F \rightarrow \mathbf{GO}_f$ of real algebraic groups, where $\mathbf{GO}_f$ is the general orthogonal group of $f$ \cite[p. 7]{ERT}. 

Notice that the element $\mu \in \mathbb{C}$ is uniquely determined by $B \in \mathbf{GU}_F$ so that one can unambiguously write $\mu=\mu(B)$. Moreover $B$ represents an equivalence between the forms $f$ and $\mu f$, which both have signature $(2m-1, 1)$. By an exactly the same argument as in the type-I case (see \cite[p. 7]{ERT}) we have that $\mu \in \mathbb{R}$ and $\mu > 0$. Moreover, if $m \geq 2$ then $\mu$ is \textit{totally positive} and thus $K = k(\sqrt{\mu})$ is totally real.

Let us define the group of ``scalar'' $2m \times 2m$ matrices $\mathbf{C} = \{c \cdot \mathrm{id} \,|\, c\in \mathbb{C}^{\times} \}$.
Then the \textit{projective general unitary group} of $F$ is
$\mathbf{PGU}_F=\mathbf{GU}_F/\mathbf{C}$. The isomorphism $\Phi$ above
 naturally descends to an isomorphism $\mathbf{PGU}_F \to \mathbf{PGO}_f$, where $\mathbf{PGO}_f = \mathbf{GO}_f/\mathbf{C}$ is the projective general orthogonal group of $f$, as defined in \cite[p.~7]{ERT}. 
 
Suppose that $B \in \mathbf{GU}_F (k)$. We claim that $\mu(B) \in k_{>0}=k \cap \mathbb{R}_{>0}$. Notice that $B^* F B = \mu(B) \cdot F$, hence $\mu(B) \in D$. Moreover, $\mu(B) \in \mathbb{R}_{>0}$ and therefore belongs to the centre of $D\otimes \mathbb{R}$. These two facts together imply that $\mu(B)$ is in the centre of $D$, which is precisely $k$. 

The following result is analogous to \cite[Lemma 4.3]{ERT} in the setting of type-II lattices.

\begin{lemma}\label{lemma:ERT-type-II}
Let $F$ be a skew-Hermitian form on the right $D$-module $D^m$, where $D$ is a quaternion algebra over a totally real number field $k$ satisfying the hypotheses of Definition \ref{def:type-2-lattice}. Let $\mathbf{PU}_F (k)$ be the group of $k$-points of the adjoint group $\mathbf{PU}_F$. Then

\begin{equation}\label{eq:k-points-type-2}
\mathbf{PU}_F (k) = \left\{ \pm \frac{1}{\sqrt{\mu(B)}}\, B\;|\; B \in \mathbf{GU}_F (k) \right\}.   
\end{equation}
\end{lemma}

\begin{proof}
The proof follows closely the argument of \cite[Lemma 4.3]{ERT}, with the main difference being that the groups $\mathbf{O}_f$ and $\mathbf{GO}_f$ are replaced by their type II counterparts $\mathbf{U}_F$ and $\mathbf{GU}_F$, respectively.

Let $\pi:\mathbf{U}_F \rightarrow \mathbf{PU}_F$ and $\eta: \mathbf{GU}_F \rightarrow \mathbf{PGU}_F$ be the quotient maps. Both $\pi$ and $\eta$ are $k$-homomorphisms of $k$-algebraic groups by \cite[Theorem 6.8]{Borel}. The inclusion map $$\nu: \mathbf{U}_F \rightarrow \mathbf{GU}_F$$ is a $k$-homomorphism as well. By the universal mapping property \cite[p.~94]{Borel} the inclusion $\nu$ of $\mathbf{U}_F$ into $\mathbf{GU}_F$  induces a $k$-homomorphism $\overline{\nu}: \mathbf{PU}_F\rightarrow \mathbf{PGU}_F$ such that $\overline{\nu}\pi=\eta \nu$.

If $B \in \mathbf{U}_F$, then $\overline{\nu}(\pm B)= \mathbf{C}B$ and so $\overline{\nu}$ is a monomorphism. Now, assume that $B \in \mathbf{GU}_F$. Then $B^*FB = \mu F$, and thus 
$$
(1/\sqrt{\mu}\cdot B)^*\, F\, (1/\sqrt{\mu}\cdot B) = F
$$ which implies $\pm 1/\sqrt{\mu}\cdot B \in \mathbf{U}_F$. We have that $\overline{\nu}(\pm \frac{1}{\sqrt{\mu}}\cdot B)=\mathbf{C}B$ and thus $\overline{\nu}$ is an isomorphism.

There is a short exact sequence of algebraic $K$-groups
$$1 \rightarrow \mathbf{C} \rightarrow \mathbf{GU}_F\xrightarrow{\eta} \mathbf{PGU}_F \rightarrow 1$$ which determines an exact sequence of Galois cohomology groups
$$1 \rightarrow \mathbf{C}_k \rightarrow \mathbf{GU}_F (k) \xrightarrow{\eta} \mathbf{PGU}_F (k) \rightarrow H^1(k,\mathbf{C})$$ by \cite[Prop. 1.17 and Corollary 1.23]{Borel-Serre}. We have that $H^1(k,\mathbf{C})=0$ by \cite[p. 72, Prop. 1]{Serre-Gal-cohomology}, therefore $\eta(\mathbf{GU}_F (k))=\mathbf{PGU}_F (k)$, and
$\mathbf{PGU}_F (k)=\{\mathbf{C}B\;|\; B \in \mathbf{GU}_F (k)\}.$
Therefore
$$\mathbf{PU}_F (k)= \overline{\nu}^{-1}(\mathbf{PGU}_F (k))= \left\{\pm\frac{1}{\sqrt{\mu(B)}} B \;|\; B \in \mathbf{GU}_F (k) \right\}.$$
\end{proof}

Now that we have a complete description of $k$-involutions in the adjoint $k$-group $\mathbf{PU}_F$, we can discuss the structure of the corresponding fc-subspaces. The case where $N$ belongs to the image of $\mathrm{U}_F(D)$ in $\mathbf{PU}_F$ has already been treated, and we get that the corresponding fc-subspaces are arithmetic of type II, with the same adjoint trace field $k$.

We now suppose that $\pm N$ is an involution in $\mathbf{PU}_F (k)$ of the form $\pm N=\pm\frac{1}{\sqrt{\mu(B)}} B$, with $B \in \mathbf{GU}_F (k)$ and $\mu = \mu(B)$ is totally positive and not a square in $k$. Let $K=k(\sqrt{\mu})$. The discussion proceeds in very much the same way as for odd-dimensional type-I lattices (see Section \ref{sec:k-involutions-type1}). Notice that both $N$ and $B$ are defined up to multiplication by $-\mathrm{id}$. Without loss of generality, we may assume that $N=(1/\sqrt{\mu})B$. We define the \textit{positive eigenspace} $D_1$ of $N$ and similarly the \textit{negative eigenspace} $D_{-1}$ of $N$. Notice that now $D_1$ and $D_{-1}$ are right $(D \otimes K)$-modules, and that the quaternion algebra $D \otimes K$ may now split.

Up to multipliciation by $-\mathrm{id}$, we may assume that the restriction $G$ of the form $F$ to $D_1$ has signature $(n-1,1)$, while the restriction $H$ of $F$ to $D_{-1}$ has signature $(l,0)$, with $n= \mathrm{dim}(D_1)$, $l=\mathrm{dim}(D_{-1})$ and $n+l=m$.

We claim that the form $G$ is admissible. Notice that the nontrivial automorphism $\sigma \in \mathrm{Gal}(K/k)$ acts naturally on the elements of the quaternion algebra $D \otimes K$ by sending $q \otimes \lambda$ to $q \otimes \lambda^{\sigma}$. By computing the restriction of scalars of $\mathbf{U}_G$ we obtain the $K$-isomorphism:
$$\mathrm{Res}_{K/k}\,\mathbf{U}_G\cong \mathbf{U}_G \times \mathbf{U}_{G^{\sigma}},$$
where $G^{\sigma}$ is obtained by applying $\sigma$ to the coefficients of $G$. 

The action of $\sigma$ sends $N=(1/\sqrt{\mu})B$ to $-N=-(1/\sqrt{\mu})B$, thus exchanging the positive and negative eigenspaces $D_1$ and $D_{-1}$. It follows that $n=\mathrm{dim}(D_1)=\mathrm{dim}(D_{-1})=l$ and that $m=n+l$ is even. The form $F$ is defined over $k$, and because of this we have that $F^{\sigma}=F$. We may therefore identify the form $G^{\sigma}$ with the positive definite form $H=F_{|D_{-1}}$ so that $\mathbf{U}_{G^{\sigma}}$ is isomorphic to group $\mathbf{U}_H$.
We obtain $K$-defined isomorphisms
\begin{equation}\label{eq:res-of-scalars-type-II} \mathrm{Res}_{K/k}\,\mathbf{U}_G \cong \mathbf{U}_G \times \mathbf{U}_H \cong \mathbf{H}, \end{equation} where the $k$-group $\mathbf{H}$ is the centraliser of $N$ in $\mathbf{U}_F$. Moreover the isomorphism in \eqref{eq:res-of-scalars-type-II} induces a bijection between the $k$-points of $\mathrm{Res}_{K/k}\,\mathbf{U}_G$ and those of $\mathbf{H}$, implying that these two groups are actually $k$-isomorphic.
As in the case of type-I lattices, the admissibility of $G$ now follows easily from the admissibility of $F$ over $k$.

These facts together imply that if $\Gamma<\mathbf{U}_F (k)$ is a type-II arithmetic lattice, then $\mathrm{Stab}_{\Gamma}(V_1)=\Gamma \cap \mathbf{H}(k)$ is an arithmetic lattice in the group $\mathbf{U}_G(\mathbb{R}) \times \mathbf{U}_H(\mathbb{R})$. By modding out the the compact factor $\mathbf{U}_H(\mathbb{R})$ we obtain that the projection $\Lambda$ of $\mathrm{Stab}_{\Gamma}(V_1)$ to $\mathbf{U}_G(\mathbb{R}) \cong \mathrm{O}_{m-1,1}$ is an arithmetic hyperbolic lattice with adjoint trace field $K$. 

Finally, we want to understand whether this totally geodesic sublattice $\Lambda<\mathbf{U}_F (K)$ is of type I or type II. By the discussion in Section \ref{sec:unitary-lattices}, if $D \otimes K \cong M_2(K)$ then $\Lambda$ is a type-I lattice, while if $D \otimes K$ is a division algebra, $\Lambda$ is a type-II lattice.

\begin{remark}
It is a well-known fact that all hyperbolic lattices acting on $\mathbb{H}^n$, with $n$ even, are type-I lattices. To construct {\itshape all} arithmetic lattices acting on $\mathbb{H}^n$, where $n\neq 3,7$ is an odd number, one has to consider both 
type-I and type-II lattices \cite[p.~222]{Vin-Geometry-II}. In the remaining cases $n=3,7$, one has to add two exceptional families of arithmetic lattices which we now introduce.
\end{remark}

\newpage
\subsection{(Quasi-)arithmetic hyperbolic lattices in dimension $3$}\label{sec:3-dim-arithmetic}

\subsubsection{Exceptional isomorphism between $\mathbf{PGL}_2(\mathbb{C})$ and $\mathbf{PSO}_{3,1}(\R)$}

It is well-known that $\Isom^+(\HH^3)$ can be viewed both as $\mathbf{PGL}_2(\CC) = \mathrm{PSL}_2(\CC)$ and $\mathbf{PSO}_{3,1}(\R)$ due to the exceptional isomorphism between $\mathbf{PGL}_2(\mathbb{C})$ and $\mathbf{PSO}_{3,1}(\R)$. This gives a way to construct all arithmetic lattices acting on $\mathbb{H}^3$ through arithmetic subgroups of $\mathrm{PSL}_2(\mathbb{C})$, as follows:
\begin{enumerate}
    \item Fix a complex number field $L$ with one complex place (up to complex conjugation) and a finite (possibly empty) set of real places.
    \item Choose a quaternion algebra $A$ over $L$ such that $A$ is ramified at all real places of $L$.
    \end{enumerate}
    
If $O$ is an order in $A$, then the central quotient of the groups of units of $O$, denoted by $\mathrm{PSL}_A(O)$, is an arithmetic lattice acting on $\mathbb{H}^3$. Moreover, all arithmetic lattices acting on $\mathbb{H}^3$ are commensurable in the wide sense with a group of this form. 
In this setting both the field $L$ and the quaternion algebra $A$ are invariants of the commensurability class of the lattice $\Gamma$ and correspond to the {\itshape invariant trace field} and the {\itshape invariant quaternion algebra} of $\Gamma$ as defined in \cite[Section 3.3]{Mac-Reid}. As it is explained in \cite{Vin95}, the field $L$ is simply the invariant trace field of $\Gamma$ regarded as a lattice in $\mathbf{PGL}_2(\mathbb{C})$, i.e. $L=\mathbb{Q}(\{\mathrm{tr}(\mathrm{Ad}(\gamma))\,|\, \gamma \in \Gamma\})$ where $$\mathrm{tr}(\mathrm{Ad}(\gamma)) = \mathrm{tr}(\gamma)^2-1= \mathrm{tr}(\gamma^2)+1 $$ is the trace of the adjoint representation of $\gamma \in \mathbf{PGL}_2(\mathbb{C})$. The complex ambient group of $\Gamma<\mathbf{PGL}_2(\mathbb{C})$ is $\mathbf{PGL}_A$: the central quotient of the group of invertible elements of the invariant quaternion algebra $A$. Notice that the group of $L$-points $\mathbf{PGL}_A(L)$ is isomorphic to the group $A^*/L^*$ defined as the quotient of the group $\mathrm{GL}_A=A^*$ by the multiplicative action of $L^*$ \cite[Theorem 8.4.4]{Mac-Reid}.

If the lattice is not uniform, then necessarily $L=\mathbb{Q}(\sqrt{-d})$, where $d > 0$ is square-free, $A=M_2(L)$, and the resulting lattice is commensurable with $\mathrm{PSL}_2(\mathcal{O}_d)$, where $\mathcal{O}_d$ is the ring of integers of $L$. These are the so-called Bianchi groups. In all other cases $A$ is a division algebra \cite[Section 8.2]{Mac-Reid}.

We wish to understand how the invariant trace field of a lattice $\Gamma<\mathbf{PGL}_2(\mathbb{C})$ relates to the adjoint trace field of the image of $\Gamma$ under the exceptional isomorphism $i:\mathbf{PGL}_2(\mathbb{C}) \rightarrow \mathbf{PSO}_{3,1}(\mathbb{R})$.  We therefore turn our attention to the exceptional isomorphism $i:\mathbf{PGL}_2(\mathbb{C}) \rightarrow \mathbf{PSO}_{3,1}(\mathbb{R})$, which is explicitly described in \cite[Section 10.2]{Mac-Reid}. For a complex number $z=x+iy$, $x,y \in \mathbb{R}$, denote by $z^*$ its conjugate $z^*=x-iy$. We prove the following:

\begin{prop}\label{prop:adjoint-rep-trace}
Let $i$ denote the exceptional isomorphism from $\mathbf{PGL}_2(\mathbb{C})$ to $\mathbf{PSO}_{3,1}(\mathbb{R})$. For any element $g \in \mathbf{PGL}_2(\mathbb{C})$, $\mathrm{tr}(\mathrm{Ad}(i(g)))= \mathrm{tr}(\mathrm{Ad}(g))+\mathrm{tr}(\mathrm{Ad}(g))^*$.
\end{prop}

\begin{proof}
The isomorphism $i$ can be conveniently interpreted as an isomorphism of {\itshape real} Lie groups between the $\mathbb{R}$-points of $\mathrm{Res}_{\mathbb{C}/\mathbb{R}}\,\mathbf{PGL}_2$ and the group $\mathbf{PSO}_{3,1}(\mathbb{R})$. Being an isomorphism of real Lie groups, it induces an isomorphism of the real Lie algebras and these induce an isomorphism of the corresponding adjoint representations. Thus, we opt to work directly in the group $\mathrm{Res}_{\mathbb{C}/\mathbb{R}}\,\mathbf{PGL}_2(\mathbb{R})$ and compute the trace of the adjoint representation of its elements. In order to describe this group, we use the regular representation of complex number as $2 \times 2$ matrices with real coefficients of the form 
\begin{equation}\label{eq:complexrep}\mathbb{C} \ni z = x + iy \mapsto \begin{pmatrix} x &-y \\y &x \end{pmatrix} \in M_2(\mathbb{R}),\end{equation} where $x$ and $y$ correspond respectively to the real and imaginary parts of $z \in \mathbb{C}$. By doing so we can describe the Weil restriction of $\mathbf{SL_2(\mathbb{C})}$ as:
\begin{small}
$$ 
\mathrm{Res}_{\mathbb{C}/\mathbb{R}}\,\mathbf{SL}_2(\mathbb{R})=\left\{\begin{pmatrix}A & B \\ C & D \end{pmatrix}\Big|\; AD-BC=\begin{pmatrix}1&0\\0& 1 \end{pmatrix},\; A,B,C,D\; \textrm{of the form}\, \eqref{eq:complexrep}\right\},
$$    
\end{small}

\noindent which is a $6$-dimensional real Lie group. The group $\mathrm{Res}_{\mathbb{C}/\mathbb{R}}\,\mathbf{PGL}_2$ is the quotient of $\mathrm{Res}_{\mathbb{C}/\mathbb{R}}\,\mathbf{SL}_2$ by $\{\pm \mathrm{id}\}$ and its Lie algebra is the Weil restriction from $\mathbb{C}$ to $\mathbb{R}$ of the Lie algebra of $\mathbf{SL}_2(\mathbb{C})$, i.e. is obtained by regarding the complex $3$-dimensional Lie algebra $\mathfrak{sl}_2$ as a $6$-dimensional real Lie algebra $\mathrm{Res}_{\mathbb{C}/\mathbb{R}}\,\mathfrak{sl}_2$. 

As such, if $B=(v_1,v_2,v_3)$ is a basis of $\mathfrak{sl}_2$, a basis of $\mathrm{Res}_{\mathbb{C}/\mathbb{R}}\,\mathfrak{sl}_2$ is given by $B'=(v_1,i\cdot v_1, v_2,i\cdot v_2, v_3,i\cdot v_3)$. Suppose that $g \in \mathbf{SL}_2(\mathbb{C})$ and $\mathrm{Ad}(g)\in \mathbf{GL}(\mathfrak{sl}_2)$ is represented with respect to the basis $B$ by a matrix 
\begin{equation}\label{eq:complex-adjoint-matrix}M=\begin{pmatrix}a_{1,1}&a_{1,2}&a_{1,3} \\ a_{2,1}&a_{2,2}&a_{2,3} \\a_{3,1} &a_{3,2} &a_{3,3}  \end{pmatrix} \in \mathbf{GL}_3(\mathbb{C})\end{equation} 
with trace $\mathrm{tr}(\mathrm{Ad}(g))=a_{1,1}+a_{2,2}+a_{3,3}$. We denote by $\mathrm{Res}_{\mathbb{C}/\mathbb{R}}(g)$ the element which correponds to $g$ in the group $\mathrm{Res}_{\mathbb{C}/\mathbb{R}}\,\mathbf{SL}_2$. Its adjoint action $\mathrm{Ad}(\mathrm{Res}_{\mathbb{C}/\mathbb{R}}(g)) \in \mathbf{GL}(\mathrm{Res}_{\mathbb{C}/\mathbb{R}}\,\mathfrak{sl}_2)$ is represented with respect to the basis $B'$ by the matrix 

\begin{equation}\label{eq:real-adjoint-matrix}\mathrm{Res}_{\mathbb{C}/\mathbb{R}}(M)=\begin{pmatrix}A_{1,1}&A_{1,2}&A_{1,3} \\ A_{2,1}&A_{2,2}&A_{2,3} \\A_{3,1} &A_{3,2} &A_{3,3}  \end{pmatrix} \in \mathbf{GL}_6(\mathbb{R}),\end{equation} where each $2 \times 2$ real submatrix $A_{i,j}$ is obtained from the corresponding coefficient of $M$ as in $\eqref{eq:complexrep}$. We thus see that 
\begin{multline}
\label{eq:trace-restriction}
\mathrm{tr}(\mathrm{Ad}(i(g)))=\mathrm{tr}(\mathrm{Ad}(\mathrm{Res}_{\mathbb{C}/\mathbb{R}}(g)))= \mathrm{tr}(A_{1,1})+ \mathrm{tr}(A_{2,2})+\mathrm{tr}(A_{3,3})=\\
= \mathrm{tr}(\mathrm{Ad}(g))+ \mathrm{tr}(\mathrm{Ad}(g))^*.     
\end{multline}
\end{proof}

We note the following important corollary of Proposition \ref{prop:adjoint-rep-trace}:
\begin{corollary}\label{cor:subfield}
If $\Gamma<\mathbf{PGL}_2(\mathbb{C})$ is an arithmetic lattice with invariant trace field $L$ that contains a subfield $k$ such that $[L:k]=2$, then the adjoint trace field of  $i(\Gamma)<\mathbf{PSO}_{3,1}$ is precisely $k$, and thus is totally real. Moreover, the identity component $\mathbf{G}^{\circ}$ of the real ambient group $\mathbf{G}$ of $i(\Gamma)$ is $k$-isomorphic to the group $\mathrm{Res}_{L/k}\,\mathbf{PGL}_A$, where $A$ is the invariant quaternion algebra of $\Gamma$.
\end{corollary}
\begin{proof}
Since the field $L$ has one complex place, all of its subfields are totally real. Therefore $L$ is an imaginary quadratic extension of the totally real field $k$ and for any $z \in L$, $z+z^* \in k$. By Proposition \ref{prop:adjoint-rep-trace}, the adjoint trace field of $i(\Gamma)$ is $$\mathbb{Q}(\{\mathrm{tr}(\mathrm{Ad}(i(\gamma))\,|\, \gamma \in \Gamma\})=\mathbb{Q}(\{\mathrm{tr}(\mathrm{Ad}(\gamma))+ \mathrm{tr}(\mathrm{Ad}(\gamma))^*\,|\, \gamma \in \Gamma\})\subset k.$$

By \cite[Remark 7 (b)]{Vin95}, there exists $\gamma \in \Gamma$ such that $\beta=\mathrm{tr}(\mathrm{Ad}(\gamma)) \in \mathbb{R}$ and $k=\mathbb{Q}(\beta)$. This implies that $\mathrm{tr}(\mathrm{Ad}(i(\gamma)))=2\beta$, and therefore $k$ is precisely the adjoint trace field of $i(\Gamma)$. The statement about the ambient groups follows easily: the exceptional isomorphism $i:\mathbf{PGL}_2(\mathbb{C})\rightarrow \mathbf{PSO}_{3,1}(\mathbb{R})$ maps the Zariski-dense subgroup $\Gamma<\mathbf{PGL}_A(L)$ to the Zariski-dense subgroup $i(\Gamma)<\mathbf{G}^{\circ}(k)$. Now, the group $\mathbf{PGL}_A(L)$ corresponds to the group of $k$-points of $\mathrm{Res}_{L/k}\,\mathbf{PGL}_A$. The desired $k$-isomorphism follows from the same argument as in the proof of Proposition \ref{prop:adj-trace-of-QA}.
\end{proof}

\medskip

\subsubsection{Classification of (quasi)-arithmetic lattices in $\mathrm{PSL}_2(\CC)$: types I-III}\label{sec:3-dim-types}
It is natural to ask how to characterize type-I and type-II lattices among the $3$-dimensional ones with the above description. As mentioned in \cite[p.~366]{Li-Millson}, type-I and type-II lattices correspond to the case where the field $L$ contains a totally real subfield $k$ such that the degree of the extension $[L:k]$ is $2$, i.e. type-I and type-II lattices are those to which Corollary \ref{cor:subfield} applies. In this case, the distinction between type I and type II can be recovered as follows \cite[pp.~199--200]{Schwerner}: if the norm form $N_{L/k}(A)$ splits over $k$ then we obtain type-I lattices, otherwise we get type-II lattices. In the case of type-I lattices, the corresponding admissible form $f$ defined over $k$ can be explicitly recovered from the data of the field $L$ and the quaternion algebra $D$ \cite[Section 10.2]{Mac-Reid}. 

The exceptional family of $3$-dimensional hyperbolic lattices arises when the field L does not contain a subfield $k$ such that $[L:k]=2$.
The corresponding arithmetic lattices are not of type I or II, thus we assign them to type III. 

\begin{definition}\label{def:3-dim-type-3}
    An arithmetic lattice $\Gamma<\mathrm{PSL}_2(\mathbb{C})$ with invariant trace field $L$ that does {\it not} contain a subfield $k$ such that $[L:k]=2$ is called a {\it $3$-dimensional type-III} arithmetic lattice.
\end{definition}
\noindent An explicit example of such a lattice is discussed, for instance, in \cite[p.~22]{LLR08}.

\medskip
We also record the following geometric characterization: type-I and type-II lattices are precisely the arithmetic lattices acting on $\mathbb{H}^3$ that contain a pure translation along a geodesic. Indeed, \cite[Remark 7(c)]{Vin95} implies that type-III lattices do not contain such translations, and the fact that type-I and type-II have such translations can be justified by the restriction of an admissible quadratic or skew-Hermitian form onto a subspace of signature $(1,1)$. All loxodromic elements of type-III lattices act as a composition of a translation and a rotation along the same axis with the rotation angle $\theta$ that is not a rational multiple of $2\pi$.

Summarizing, if $\Gamma<\mathrm{PSL}_2(\mathbb{C})$ is an arithmetic lattice with invariant trace field $L$, then $\Gamma$ is:
\begin{itemize}
    \item type I if $L$ contains a subfield $k$ such that $[L:k]=2$ and the norm form $N_{L/k}(A)$ splits over $k$;
    \item type II if $L$ contains a subfield $k$ such that $[L:k]=2$ and the norm form $N_{L/k}(A)$ does not split over $k$;
    \item type III if $L$ does not contain a subfield $k$ such that $[L:k]=2$.
\end{itemize}

We show how one may distinguish type I and II by examining the ramification set of the invariant quaternion algebra $A$.

\begin{prop}\label{prop:ramification-norm-form}
    For an arithmetic lattice $\Gamma<\mathbf{PGL}_2(\mathbb{C})$ with invariant trace field $L$ such that $[L:L\cap \mathbb{R}=k]=2$ and invariant quaternion algebra $A$, the following are equivalent:
    \begin{enumerate}
        \item $\Gamma$ is type I;
        \item $A=B\otimes_k L$ for some quaternion $k$-algebra $B$;
        \item The finite ramification set of $A$ consists of $s\geq 0$ pairs of prime ideals $P_i, P'_i \subset \mathcal{O}_L$, $i=1,\dots,s$, lying above a splitting prime ideal $Q_i\subset \mathcal{O}_k$ (i.e.\ $Q_i\cdot \mathcal{O}_L= P_i \cdot P'_i$ with $P_i \neq P'_i$);
        \item $A$ possesses an involution $\sigma$ of the second kind that leaves $k$ elementwise invariant;
        \item The norm form $N_{L/k}(A)$ splits.
    \end{enumerate}
\end{prop}

\begin{proof}
The equivalence of (1), (2) and (3) follows from \cite[Sections 9.5, 10.1, 10.2]{Mac-Reid}.
The equivalence of (4) and (5) is a result of Albert--Riehm--Scharlau (see \cite[Theorem 3.1]{INV}).
The implication (4)$\Rightarrow$(2) is a result of Albert \cite[Proposition 2.2]{INV}. Note that this implication is not valid for arbitrary central simple algebras, and holds true for quaternion algebras due to the fact that the standard involution is their unique symplectic involution.

(2)$\Rightarrow$(4) is easily checked. The involution of the second kind is given by $\sigma(q\otimes z)=\tau(q) \otimes z^{\ast}$, where $\tau$ is the standard involution on $B$ and $z^{\ast}$ denotes the complex conjugate of $z \in L$.
\end{proof}

It is also rather natural to define the notion of a quasi-arithmetic lattice $\Gamma<\mathrm{PSL}_2(\mathbb{C})$ as a lattice whose invariant trace field $L$ has one complex place and whose invariant quaternion algebra ramifies at all the real places, and extend to this more general setting the classification into types.
Notice that by Corollary \ref{cor:subfield} quasi-arithmetic lattices of types I and II in $\mathrm{PSL}_2(\mathbb{C})$ correspond to quasi-arithmetic lattices of the same type in $\mathrm{PO}_{3,1}$, as defined in Definitions \ref{def:type-I} and \ref{def:type-2-lattice}.

\subsubsection{Type-II lattices as arithmetic lattices in $\mathrm{PSL}_2(\mathbb{C})$}\label{sec:Nori's-argument}
Let us show how to construct a quaternion algebra $D$ over $k$, and the corresponding skew-Hermitian form $F$ on $D^2$ in the case of $3$-dimensional type II-lattices. This argument is not new; however, we could not find it anywhere in the literature.

We briefly recall the construction of the norm form (also called \textit{corestriction}) $N_{L/k}(A)$ of the quaternion algebra $A$. Given a central simple algebra $A$ of degree $m$ over $L$ and a subfield $k\subset L$ such that $[L:k]=2$, its conjugate algebra $A^c = \{a^c \,|\, a \in A\}$ is defined by the following operations:
\begin{equation*}
a^c+b^c = (a+b)^c,\; a^c \cdot b^c= (a\cdot b)^c,\; \lambda\cdot a^c= (c(\lambda)\cdot a)^c,
\end{equation*}
where $a, b \in A$, $\lambda \in L$, and $c(\lambda)$ denotes the  Galois conjugate of $\lambda$ (relatve to the subfield $k$). The switch map $s: A^c \otimes_L A \rightarrow A^c \otimes_L A$ defined by $a^c\otimes b \mapsto b^c \otimes a$ is $c$-semilinear over $L$ and is a $k$-algebra automorphism. The $k$-subalgebra 
\begin{equation*}
N_{L/k}(A) = \{ z \in A^c \otimes_L A \,|\, s(z) = z \}
\end{equation*}
of elements fixed by $s$ is a central simple $k$-algebra of degree $m^2$.
This construction induces a homomorphism of the respective Brauer groups \cite[Proposition 3.13]{INV}:
\begin{align*}
N_{L/k}: \mathrm{Br}(L) &\rightarrow \mathrm{Br}(k), \\ 
[A] &\mapsto [N_{L/k}(A)].
\end{align*}

Now, let $A$ be the quaternion algebra over a complex field $L$ associated with an arithmetic lattice $\Gamma$ in $\mathbf{PGL}_2(\mathbb{C})$. Since $A$ is a quaternion algebra, it has order $1$ (if $A = M_2(L)$) or $2$ (if $A$ is a division algebra) in the Brauer group of $L$. There are two possible cases:
\begin{itemize}
    \item[(1)] $\mathrm[N_{L/k}(A)]$ has order $1$ in $\mathrm{Br}(k)$, $N_{L/k}(A)\cong M_4(k)$ and $\Gamma$ is a type-I lattice.
    \item[(2)] $\mathrm[N_{L/k}(A)]$ has order $2$ in $\mathrm{Br}(k)$ and $N_{L/k}(A)\cong M_2(D)$, where $D$ is a division quaternion algebra over $k$.
\end{itemize}
Notice that the case where $N_{L/k}(A)$ is a division algebra of degree $4$ is excluded: since $k$ is an algebraic number field the order of $[N_{L/k}(A)]$ as an element of the Brauer group of $k$ is equal to the degree of the division algebra that is Brauer-equivalent to $N_{L/k}(A)$ \cite[p. 359]{Pierce}.

Suppose that we are in case (2) and let us fix a basis $\mathcal{B}$ on the right $D$-module $D^2$. This choice specifies a $D$-algebra isomorphism between $N_{L/k}(A)\cong M_2(D)$ and the algebra $\mathrm{End}_D(D^2)$ of $D$-linear endomorphisms of $D^2$. By \cite[Proposition 4.1 and Theorem 4.2]{INV}, there is a bijection between involutions of the first kind on $\mathrm{End}_D(D^2)$ and non-degenerate skew-Hermitian forms on $D^2$ (up to multiplication in $k^\times$). If $\phi$ is an involution of the first kind on $\mathrm{End}_D(D^2)$ and $f$ is the corresponding skew-Hermitian form, then $f$ and $\phi$ are related as follows:
\begin{equation}\label{eq:form-endomorphism-correspondence}
    f(x, g(y)) = f(\phi(g)(x), y)
\end{equation} 
for all $x$, $y$ in $D^2$ and $g \in \mathrm{End}_D(D^2)$. 
The choice of the basis $\mathcal{B}$ allows us to rewrite (\ref{eq:form-endomorphism-correspondence}) as
\begin{equation}\label{eq:quaternion-matrices}
FG=\phi(G)^*F,
\end{equation}
where $F$ (resp.\ $G$) denotes the matrix that represents the form $f$ (resp.\ the endomorphism $g$) with respect to the basis $\mathcal{B}$. The notation $M^*$ represents the matrix obtained from $M$ by transposing and applying the standard involution of $D$ to all its coefficients.

Let $\sigma$ denote the standard involution on the quaternion algebra $A$. We can define an involution $\phi$ on $A^c \otimes_L A$ by setting $\phi(a^c \otimes b)=\sigma(a)^c \otimes \sigma(b)$. It is easy to see that $\phi$ is an involution of the second kind on $A^c \otimes_L A$ whose fixed point set is the field $k$. Moreover, it commutes with the switch map $s$, so that $\phi(N_{L/k}(A))=N_{L/k}(A)$. Its restriction to $N_{L/k}(A)$, which we still denote by $\phi$ with a slight abuse of notation, is an involution of the first kind with associated skew-Hermitian form $f$ on~$D^2$.

We now define a map $i$ from the quaternion $L$-algebra $A$ to the $k$-algebra $N_{L/k}(A)\cong M_2(D)$ as follows:
\begin{equation*}
i(a)=a^c\otimes a.     
\end{equation*}
The map $i$ is multiplicative and $i(1)=1^c\otimes 1$, so that it becomes a group homomorphism when restricted to the subgroup $\mathrm{SL}_A$ of unit-norm elements of $A$. We claim that the map $i$ induces an isogeny between the group $\mathbf{SL}_A$ and $\mathbf{U}_F^{\circ}$, where $F$ is the skew-Hermitian matrix which represents the form $f$ with respect to the basis~$\mathcal{B}$.

Let us denote by $G\in M_2(D)$ the element $i(a)$, where $a\in \mathrm{SL}_A=\mathbf{SL}_A(L)$. 
Since $1 = a\cdot \sigma(a)$ we see that
\begin{equation}\label{eq:Nori}
\mathrm{id} = i(1) = i(\sigma(a)\cdot a) = i(\sigma(a))\cdot i(a) = \phi(G) \cdot G,
\end{equation}
where the last equality holds because 
\begin{equation*}
i(\sigma(a))=\sigma(a)^c \otimes \sigma(a)=\phi(a^c\otimes a)=\phi(i(a)).
\end{equation*}
By multiplying both sides of (\ref{eq:quaternion-matrices}) on the left by $G^*$ and applying (\ref{eq:Nori}), we obtain
\begin{equation*}
G^*FG = G^*\phi(G)^*F = (\phi(G)\cdot G)^*F = F,
\end{equation*}
and therefore $i(a) = G\in \mathrm{U}(F,D)=\mathbf{U}_F (k)$.

For any maximal order $O < A$, we have that $\mathrm{SL}_A(O)$ is a lattice in $\mathbf{SL}_A(\mathbb{C}) \cong \mathbf{SL}_2(\mathbb{C})$ and the image $i(\mathrm{SL}_A(O))$ is a lattice in $\mathbf{U}_F (\mathbb{R})\cong \mathrm{O}_{n,1}$, and thus Zariski-dense in $\mathbf{U}_F^{\circ}$ by Borel's density theorem. This gives rise to a surjective $k$-morphism $\mathrm{Res}_{L/k}(\mathbf{SL}_A)\to \mathbf{U}_F^{\circ}$. Moreover, $\ker(i)$ is finite as it is given by the elements of $L$ with unit norm over $k$. Indeed, $L$ is an imaginary quadratic extension of $k$ and by Dirichlet's unit theorem the rank of the group of units in the ring of integer elements over $k$ is zero. 
Thus, we have a $k$-isogeny between $\mathrm{Res}_{L/k}(\mathbf{SL}_A)$ and $\mathbf{U}_F^{\circ}$. 

Finally, we wish to go the other way around and express a type-II arithmetic lattice $\Gamma < \mathbf{PU}_F (k)$ acting on $\mathbb{H}^3$ as an arithmetic lattice in $\mathbf{PGL}_2(\mathbb{C})$. Recall that the group $\mathrm{PU}_F(D)$ is naturally identified with a subgroup of $\mathrm{PO}_f(\mathbb{R})$, for some symmetric bilinear form $f$ of signature $(3, 1)$. Hence $\Gamma$ can be regarded as a lattice in $\mathbf{PO}_f (\mathbb{R})$ and its subgroup $\Gamma' = \Gamma \cap \mathbf{PSO}_f(\mathbb{R})$ of index $2$ can be identified with an arithmetic lattice in $\mathbf{PGL}_2(\mathbb{C})$ using the exceptional isomorphism between $\mathbf{PGL}_2(\mathbb{C})$ and $\mathbf{PSO}_f(\mathbb{R}) \cong \mathrm{PO}_{3,1}^{\circ}$. We remark that by Proposition \ref{prop:type-III-not-totally-real} below the invariant trace field of the resulting lattice in $\mathbf{PGL}_2(\mathbb{C})$ is indeed an imaginary quadratic extension of a totally real field (see Remark \ref{rem:Nori-detects-type-II}).

We can therefore associate with $\Gamma'$ its invariant trace field $L$ and invariant quaternion algebra $A$, as defined in \cite[Chapter 3]{Mac-Reid}. Since $\Gamma'$ is an arithmetic lattice, we see that $L$ is a number field with exactly one pair of (conjugate) complex embeddings and $A$ is ramified at all real places of $L$ \cite[Theorem 8.3.2]{Mac-Reid}. Moreover, $\Gamma'$ will be commensurable in the wide sense with a group of the form $\mathrm{PSL}_A(O)$, where $O$ is an order in $A$ \cite[Corollary 8.3.3]{Mac-Reid}.

\subsubsection{The Vinberg invariants of type-III lattices in $\mathbf{PSO}_{3,1}(\mathbb{R})$}

Type-III arithmetic lattices acting on $\mathbb{H}^3$ exhibit a peculiar behaviour when interpreted as lattices in $\mathbf{PSO}_{3,1}(\mathbb{R})$. We begin by proving the following fact:
\begin{prop}\label{prop:type-III-not-totally-real}
Suppose that $\Gamma< \mathbf{PGL}_2(\mathbb{C})$ is a type-III arithmetic lattice, and denote by $i(\Gamma)<\mathbf{PSO}_{3,1}(\mathbb{R})$ its image under the exceptional isomorphism. The adjoint trace field $k$ of $i(\Gamma)$ is not totally real.
\end{prop}

\begin{proof}
Let $L \subset \mathbb{C}$ denote the invariant trace field of $\Gamma$, and let $K=L\cap \mathbb{R}$. By \cite[Remark 7(b)]{Vin95}, there exists $\gamma \in \Gamma$ such that $\alpha=\mathrm{tr}(\mathrm{Ad}(\gamma)) \in \mathbb{C}$ and $L=K(\alpha)$. Notice that $K$ is necessarily a totally real field. Since $\Gamma$ is a type III lattice, the extension $L/K$ is not quadratic and the minimal polynomial of $\alpha$ over $K$ has complex roots $\alpha$, $\alpha^*$ and a non-empty set of real roots $r_1,\dots,r_k$.

In particular, the extension $L/K$ has both real and complex embeddings and thus is not a Galois extension. Let us denote by $\overline{L}$ the Galois closure of $L/K$. Since the extension $L/K$ is not quadratic we have that complex conjugation is not an automorphism of $L$. In particular $\alpha^*$ does not belong to $L$ and thus we have the following sequence of field extensions:
$$K \subsetneq L=K(\alpha) \subsetneq K(\alpha,\alpha^*) \subset \overline{L},$$ where $K(\alpha)$ is a proper subfield of $K(\alpha,\alpha^*)$. It follows that there is a non-trivial field embedding $\sigma:K(\alpha,\alpha^*)\rightarrow \overline{L}$ which is the identity on $L=K(\alpha)$. Following this fact, $\sigma(\alpha)=\alpha$ while $\sigma(\alpha^*)$ is a real number in the set $\{r_1,\dots,r_k\}$ and $\sigma(\alpha+\alpha^*)= \sigma(\alpha)+\sigma(\alpha^*)$ is the sum of a non-real and a real number and is thus non-real. By Proposition \ref{prop:adjoint-rep-trace}, we have that
$\mathrm{tr}(\mathrm{Ad}(i(\gamma)))=\alpha+\alpha^* \in k$, therefore the adjoint trace field $k$ is not totally real.
\end{proof}

We mention the following corollary of the Proposition \ref{prop:type-III-not-totally-real}:
\begin{corollary}\label{cor:type-III-non-admissible}
Suppose that $\Gamma< \mathbf{PGL}_2(\CC)$ is a type III arithmetic lattice, and denote by $i(\Gamma)<\mathbf{PSO}_{3,1}(\mathbb{R})$ its image under the exceptional isomorphism. The real ambient group $\mathbf{G}$ of $i(\Gamma)$ is not admissible. Moreover, the adjoint trace field of $\Gamma$ is not contained in the invariant trace field of $\Gamma$.
\end{corollary}
\begin{proof}
By Proposition \ref{prop:type-III-not-totally-real} the adjoint trace field $k$ of $i(\Gamma)$ is not totally real. Since the invariant trace field $L$ has a single complex place, all of its subfields are totally real. If follows that $k \not \subset L$. Denote by $\sigma:k\rightarrow \mathbb{C}$ a non-real field embedding of $k$. The algebraic group $\mathbf{G}$ is $k$-defined and non-compact at the identity embedding. We claim that the complex points of the ``conjugate'' group $\mathbf{G}^{\sigma}(\mathbb{C})$ form a non-compact group, too. Arguing by contradiction, if $\mathbf{G}^{\sigma}(\mathbb{C})$ is compact it has to be a compact complex algebraic group and so it is finite by \cite[p.~134, Problem~3]{OV}. Since the conjugation map $$\mathbf{G}(k)\ni M \mapsto M^{\sigma}\in \mathbf{G}^{\sigma}(\mathbb{C})$$ is injective, it follows that the group $\mathbf{G}(k)$ is finite. This is impossible, since $\mathbf{G}(k)$ is dense in the non-compact group $\mathbf{G}(\mathbb{R})^{\circ}$.
\end{proof}

\begin{remark}\label{rem:Nori-detects-type-II}
As a consequence of Proposition \ref{prop:type-III-not-totally-real}, all type-II arithmetic lattices in $\mathrm{PSO}_{3,1}$ arise from an arithmetic lattice in $\mathbf{PGL}_2(\mathbb{C})$ via the construction described in Section \ref{sec:Nori's-argument}. This is due to the fact that type I and type-II lattices have a totally real adjoint trace field, and thus their invariant trace field as lattices in $\mathbf{PGL}_2(\mathbb{C})$ is an imaginary quadratic extension of a totally real field.
\end{remark}

\subsubsection{Constructing $L$-involutions}\label{sec:k-involutions-dim3}

As mentioned in the previous section, in the context of arithmetic lattices in $\mathbf{PGL}_2(\mathbb{C})$ the notion of adjoint trace field has to be replaced by that of the invariant trace field $L$, and the role of the  real ambient group is now taken by the complex ambient group $\mathbf{PGL}_A$, where $A$ denotes the invariant quaternion algebra. In order to construct the fc-subspaces we now have to describe the $L$-involutions in $\mathbf{PGL}_A$.

Notice that an element in $\mathbf{PGL}_A(L)$ has order $2$ if and only if it can be represented by an element $q \in A^*$ such that $q^2 \in L^*$. By taking the tensor product $A\otimes \mathbb{\mathbb{C}}$ we see that $A^*$ is mapped injectively into $\mathbf{GL}_2(\mathbb{C})$, and $q$ corresponds to a matrix $N\in \mathbf{GL}_2(\mathbb{C})$ whose square is of the form $z \cdot \mathrm{id}$ for some non-zero $z \in L$. In order for $N$ to be non-trivial in $\mathbf{PGL}_2(\mathbb{C})$ we need that $N$ is not of the form $z \cdot \mathrm{id}$. The latter is equivalent to $\mathrm{tr}\, N = 0$, which means that $q^*=-q$ (i.e. $q$ is a pure quaternion). Hence $L$-involutions correspond to traceless elements of $A^*/L^*$. Their geometric interpretation is that of a rotation of angle $\pi$ about a geodesic in $\mathbb{H}^3$ \cite[Chapter V]{Fenchel}.

There are many traceless elements in the commensurator of an arithmetic lattice \mbox{$\Gamma<\mathbf{PGL}_2(\mathbb{C})$}: 

\begin{prop}\label{prop:Jorgensen-involutions}
Let $\Gamma<\mathbf{PGL}_2(\mathbb{C})$ be an arithmetic lattice, and let $\gamma$ be a loxodromic element of $\Gamma$. There exists an involution in $\mathrm{Comm}(\Gamma)$ that acts as rotation of angle $\pi$ about the axis of $\gamma$.
\end{prop}

A geometric proof of this fact is provided  in the proof of Theorem~1.2 in \cite{LLR08} and makes use of the so-called {\itshape Jorgensen involutions}, which are order $2$ rotations around the common perpendicular to the geodesic axes of two loxodromic elements. However, the proof provided there requires a modified argument when $\Gamma$ is not cocompact. The problem in this case is that if $\gamma \in \Gamma$ is a rotation of angle $\pi$ around a geodesic $\alpha$, then there is  no guarantee {\itshape a priori} that $\alpha$ projects to a closed geodesic (see the proof of Theorem \ref{theorem:cent} for a discussion of this phenomenon). We provide here an alternative argument which only makes use of elementary linear algebra.

\begin{proof}
We denote by $L$ the invariant trace field of $\Gamma$, by $A$ its invariant quaternion algebra and by $\Gamma^{(2)}$ the (finite-index) subgroup of $\Gamma$ generated by the squares of its elements. By \cite[Theorem 1.2]{LLR08} the group $\Gamma^{(2)}$ is {\itshape derived from a quaternion algebra}, i.e. is conjugate into a subgroup of $\mathrm{PSL}_A(O)$, where $O$ is an order in the invariant quaternion algebra $A$. The element $\gamma^2$ obviously belongs to $\Gamma^{(2)}$ and is loxodromic with the same axis as $\gamma$.

Suppose that $A = \left(\frac{m,\, n}{L}\right)$ for some $m,n \in L$. We can express the elements $\gamma^2$ as a linear combination 
$$\gamma^2 = a\cdot \mathbf{1}+b\cdot \mathbf{i} + c \cdot \mathbf{j} + d \cdot \mathbf{k},\; a,b,c,d \in L $$ of the standard basis $\{\mathbf{1},\mathbf{i},\mathbf{j},\mathbf{k}\}$ of $A$ which is defined up to multiplication by $-1$. We now look for an invertible, traceless element $$q = x\cdot \mathbf{1}+y\cdot \mathbf{i} + z \cdot \mathbf{j} + w \cdot \mathbf{k} \in A$$ which commutes with $\gamma^2$,  corresponding to the required rotation of angle $\pi$ along the axis $\alpha$.

A manual computation allows to check that the condition $\gamma^2\cdot q = q \cdot \gamma^2$ yields the following homogeneous linear system in the unknowns $x,y,z,w$:

\begin{equation*}
    \begin{cases}
    dz-cw=0;\\ 
    bw-dy=0;\\ 
    bz-cy=0.
    \end{cases}
\end{equation*}

Notice that:
\begin{itemize}
    \item The system does not depend on the values of $\mathbf{i}^2=m$ or $\mathbf{j}^2=n$ or on the real part $a$ of $\gamma^2$;
    \item The unknown $x$, which correponds to the real part of $q$, appears in no equation;
    \item The $3 \times 3$ matrix built out of the coefficients of the unknowns $y,z,w$ has determinant $0$, independently of the choice of $b,c,d$.
\end{itemize}
This implies that there is always a non-zero solution with $x=0$, which corresponds to a non-zero traceless quaternion $q$ which commutes with $\gamma^2$. If $A$ is a division algebra we can immediately conclude that $q$ is also invertible. However we also wish to account for the possibility that $A$ splits, which will happen if $\Gamma$ is a non cocompact lattice.

In this case we notice that $A \otimes \mathbb{C} \cong M_2(\mathbb{C})$ and $\gamma^2$ can be represented by a matrix $M$ in $\mathbf{SL}_2(\mathbb{C})$ with eigenvalues $\lambda \neq 0$ and $\lambda^{-1}$. Suppose that $q$ is non-invertible and that it is represented by $N \in M_2(\mathbb{C})$. Since $N$ has trace $0$ it must have eigenvalue $0$ with multiplicity $2$. Now, the matrices $M$ and $N$ commute, and as such can be brought in an upper triangular form by a simultaneous change of basis and without loss of generality we can assume that they are of the following form:
\begin{equation*}
M=\begin{pmatrix}\lambda & s \\ 0 & \lambda^{-1} \end{pmatrix},\;  N=\begin{pmatrix} 0 & t \\ 0 & 0 \end{pmatrix},    
\end{equation*} 
for some $s,t \in \mathbb{C}$ with $t \neq 0$.
The fact that $M$ and $N$ commute translates to the condition $\lambda t=\lambda^{-1}t\Rightarrow\lambda^2=1\Rightarrow \lambda= \pm 1$. This implies that $\gamma^2$ is a parabolic isometry, which contradicts our assumption on $\gamma$ being loxodromic.

We conclude by noticing that the quaternion $q$ belongs to $A^*$ and therefore its image in $A^*/L^*= \mathrm{Comm}(\Gamma)$ is a rotation by angle $\pi$ about the geodesic axis of $\gamma$.
\end{proof}

\subsection{Exceptional arithmetic lattices in dimension $7$}\label{sec:type3}

As follows from the classification of semisimple algebraic groups by Tits \cite{Tits66}, 
there exist  anisotropic algebraic groups $\mathbf{G}$ defined over any number field $k$ such that $\calG=\mathrm{Gal}(\overline{k}/k)$ induces an order three (``triality'') outer automorphism of $\G(\overline{k})$, where $\overline{k}$ is the algebraic closure of the field $k$. These groups can be described as groups of automorphisms of certain {\itshape trialitarian algebras} (see \cite{Garibaldi} or \cite[Section 43]{INV}). We are interested in those examples where $k$ is totally real and $\mathbf{G}$ is an admissible $k$-form of the real group $\mathbf{PSO}_{7,1}$.

\begin{figure}[h]
    \centering
    \includegraphics[width=3.8cm]{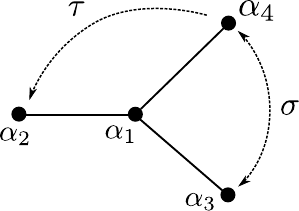}
    \caption{The Tits index of ${^6}D_{4,0}$. The reader should notice that there are no circled roots as the group is totally anisotropic. The action of the absolute Galois group $\mathcal{G} =\mathrm{Gal}(\overline{k}/k)$ is induced by complex conjugation $\sigma$, which exchanges $\alpha_2$ and $\alpha_3$, and the order $3$ trialitarian automorphism $\tau$ which permutes cyclically $\alpha_1,\, \alpha_2$ and $\alpha_3$.}
   \label{fig:Tits-diagrams}
\end{figure}

It follows from \cite[p. 336]{BC17} that, under the hypotheses above, the action of $\calG$ on the Dynkin diagram of the root system of $\G$ relative to a maximal $k$-torus is isomorphic to the symmetric group $\mathfrak{S}_3$. This can also be noticed by observing that the real algebraic group $\mathbf{PSO}_{7,1}$ is an outer form of the complex group $\mathbf{PSO}_8$, with complex conjugation acting on the Dynkin diagram via the ``folding'' automorphisms that exchanges two non-central roots. Moreover, the group $\mathbf{G}$ is always anisotropic \cite[p. 337]{BC17}. Again, this follows easily from the fact that the Tits diagram of the real group $\mathbf{PSO}_{7,1}$ has one circled root, namely the one fixed by the action of complex conjugation. It follows from this information that the Tits symbol of $\mathbf{G}$ is ${^6}D_{4,0}$ (for the Tits index, see Figure~\ref{fig:Tits-diagrams}).

\begin{definition}
Let $\Gamma < \mathbf{PO}_{7,1}(\R)$ be a lattice commensurable with $\G(\OOO)$, for some admissible triality algebraic $k$-group $\G$ as above. Then $\Gamma$ is called \textit{an arithmetic lattice of type~III}. An orbifold $M=\mathbb{H}^7/\Gamma$ is of type~III if the group $\Gamma$ is commensurable in the wide sense with an arithmetic lattice of type~III.
\end{definition}

One can similarly define the notion of a quasi-arithmetic $7$-dimensional lattice of type III, in the same way as quasi-arithmetic lattices of type I and II are defined: we simply require $\Gamma$ to be a lattice which is commensurable in the wide sense with a subgroup of $\mathbf{G}(k)$ for some admissible triality algebraic $k$-group $\mathbf{G}$. We remark that no examples of properly quasi-arithmetic trialitarian lattices are known.

The above discussion of the Tits index shows that by Godement's compactness criterion \cite{God62}, all type-III arithmetic lattices in dimension $7$ are uniform. For an explicit construction of an example see \cite{BC13}. 

\medskip
In this subsection we prove the following result:

\begin{prop}\label{prop:inner-involutions-7-dim-type-III}
Let $k$ be a totally real number field and $\mathbf{G}$ a connected adjoint algebraic $k$-group with Tits index ${^6}D_{4,0}$. Assume that the group $\mathrm{Ext}_{\mathbb{R}/k}\mathbf{G}$ obtained via extension of scalars from $k$ to $\mathbb{R}$ is isomorphic to $\mathbf{PSO}_{7,1}$. There exists an order $2$ element $\theta \in \mathbf{G}(k)$ such that fixed point set for its action on $\mathbb{H}^7$ is a totally geodesic copy of $\mathbb{H}^3$.
\end{prop}

We will apply Proposition \ref{prop:inner-involutions-7-dim-type-III} to prove Theorem \ref{teo:3-dim-type-III-in-7-dim-type-III}. Namely, we will show that
the quotient of $\mathbb{H}^3$ under the action of the centraliser in $\mathbf{G}(k)$ of the involution $\theta$ constructed in the proposition is a $3$-dimensional type-III arithmetic orbifold $N$.

\subsubsection{Constructing $k$-involutions}\label{sec:constructing-k-involutions-triality}
It is not difficult to construct order $2$ elements in the $\overline{k}$-points of an adjoint semisimple algebraic $k$-group $\mathbf{G}$ of absolute type $D_n$, with $n\geq 2$. Indeed, let $\mathbf{T} < \mathbf{G}$ be a maximal $k$-torus. Then $\mathbf{T}(\overline{k})$ is isomorphic to $(\mathbb{G}_m)^n$, where $\mathbb{G}_m$ denotes the multiplicative group of $\overline{k}$. Elements of the form $g=(\pm 1,\dots,\pm 1)$ with at least one negative entry correspond to order $2$ elements in the adjoint group $\mathbf{G}$.

We have that conjugation by $g$, which we denote $\mathrm{Inn}(g)$, is a $k$-automor\-phism of $\mathbf{G}$ if and only if it commutes with the action of the absolute Galois group $\calG$. 
The automorphism $\mathrm{Inn}(g)$ is the identity on $\mathbf{T}$ and on its Lie algebra $\mathfrak{t}$, thus it acts as the identity on the root system $\Sigma$ of $\mathbf{G}$ relative to $\mathbf{T}$. It follows that, for every $\alpha \in \Sigma$,  $\mathrm{Inn}(g)$ acts on each root space $L_{\alpha}$ as a linear transformation of the form $\pm \mathrm{id}$. Moreover by \cite[Theorem, p. 75]{Humphreys}, the action on each root space is uniquely determined by the action on the root spaces for a system of simple roots $\Delta=\{\alpha_1,\dots,\alpha_n\}$.

Indeed, denote by $\Sigma_+$ (resp.\ $\Sigma_-$) the set of {\itshape even} (resp.\ {\itshape odd}) roots given by those $\alpha \in \Sigma$ for which $\mathrm{Inn}(g)$ acts as $\mathrm{id}$ (resp.\ $-\mathrm{id}$) on the root space $L_{\alpha}$. Also, let $\Delta_+=\Sigma_+ \cap \Delta$ and  $\Delta_-=\Sigma_- \cap \Delta$. Every $\alpha \in \Sigma$ is expressed as a linear combination of the roots $\alpha \in \Delta$ with integer coefficients. Then the set $\Sigma_+$ (resp.\ $\Sigma_-$) is the set of roots which can be expressed as 
\begin{equation}\label{eq:even-odd-roots}\sum_{\alpha \in \Delta_+} n_{\alpha}\cdot \alpha + \sum_{\beta \in \Delta_-} m_{\beta}\cdot \beta\end{equation}
with $\sum_{\beta \in \Delta_-}m_{\beta}$ even (resp.\ odd). It follows that every partition of $\Delta=\Delta_+ \cup \Delta_-$ with non-empty $\Delta_{-}$ determines an involution of the form $\mathrm{Inn}(g)$ as above, and all such involutions arise in this way.

Finally, we notice that $\mathrm{Inn}(g)$ commutes with the action of the absolute Galois group $\calG$ if and only if this action preserves the partition $\Sigma=\Sigma_+ \cup \Sigma_-$ into the sets of even and odd roots. Equivalently, we require that the image of $\Delta_+$ (resp.\ $\Delta_-$) under the action of an element $\sigma \in \mathcal{G}$ lies in $\Sigma_+$ (resp.\ $\Sigma_-$). In order to prove Proposition \ref{prop:inner-involutions-7-dim-type-III} we are left with the task of finding a maximal $k$-torus $\mathbf{T}$ in a trialitarian $k$-form $\mathbf{G}$ of $\mathbf{PSO}_{7,1}$ so that the action of $\calG$ is ``small'' enough to preserve a partition of $\Sigma$ into two sets of even and odd roots.

\subsubsection{Proof of Proposition \ref{prop:inner-involutions-7-dim-type-III}}
Denote by $\calG' < \calG$ the kernel of the action of $\calG$ on the Dynkin diagram of $\mathbf{G}$, and by $E$ the fixed field for $\calG'$. The field $E$ is a degree $6$ extension of $k$, and is the smallest field such that the group $\mathrm{Ext}_{E/k}(\mathbf{G})$ obtained via extension of scalars from $k$ to $E$ is an inner form. 

By \cite[Lemma 6.29]{Pla-Rap}, there exists a totally imaginary quadratic extension $L/k$ such that $\mathrm{Ext}_{L/k}\mathbf{G}$ is quasi-split. Moreover $L$ and $E$ are linearly disjoint over $k$, meaning that the natural map $E\otimes_k L \rightarrow EL$ defined by $x \otimes y \mapsto x\cdot y$ is an isomorphism.

Since $\mathrm{Ext}_{L/k}\mathbf{G}$ is quasi-split, it follows that $\mathbf{G}$ contains an $L$-defined Borel subgroup. Denote by $\sigma$ the generator of $\mathrm{Gal}(L/k)\cong \mathbb{Z}/2\mathbb{Z}$ (i.\ e.\ the complex conjugation). By \cite[Lemma 6.17]{Pla-Rap} there exists a Borel subgroup $\mathbf{B}$ defined over $L$ such that $\mathbf{B} \cap \mathbf{B}^{\sigma}=\mathbf{T}$ is a maximal $k$-torus in $\mathbf{G}$, where $\mathbf{B}^{\sigma}$ is the image of $\mathbf{B}$ under the conjugation $\sigma$.

We now follow the discussion in \cite[p. 374]{Pla-Rap}. Denote by $\Sigma$ the root system of $\mathbf{G}$ relative to $\mathbf{T}$. The splitting field of the torus $\mathbf{T}$ (i.\ e.\ the minimal field over which $\mathbf{T}$ is isomorphic to $(\mathbb{G}_m)^4$) is the compositum $EL$, which is a degree $12$ extension of $k$. Consider the Galois automorphism $\rho \in \mathrm{Gal}(EL/k)$ which is the identity on $E$ and coincides with complex the conjugation $\sigma$ on $L$. Since $\rho$ belongs to $\mathrm{Gal}(EL/E)$ and $\mathbf{G}$ becomes an inner form over $E$, it follows that $\rho$ must act on $\Sigma$ via an element of the Weyl group $W$.

On the other hand, the automorphism $\rho$ extends to $EL$ the generator $\sigma$ of $\mathrm{Gal}(L/k)$. Since $\mathbf{T}=\mathbf{B} \cap \mathbf{B}^{\sigma}$, $\sigma$ has to take the positive roots associated to $\mathbf{B}$ under \eqref{eq:Borel-decomposition} to the negative ones. The only element of the Weyl group which exchanges a system of positive roots with a system of negative roots is the antipodal map $a$, which therefore corresponds to the action of $\rho$ on $\Sigma$.

The group $\mathrm{Gal}(EL/k)$ is the direct product generated by $\rho$ and the group $\mathrm{Gal}(EL/L) \cong \mathfrak{S}_3$. The group $\mathrm{Gal}(EL/L)$ acts faithfully on the root system $\Sigma$ by preserving the system $\Delta$ of simple roots associated to $\mathbf{B}$. It follows that the action of $\calG=\mathrm{Gal}(\overline{k}/k)$ on $\Sigma$ factors through the action of $\mathrm{Gal}(EL/k) \cong \mathfrak{S}_3 \times \mathbb{Z}/2\mathbb{Z}$, with $\mathfrak{S}_3$ acting on $\Delta$ and $\mathbb{Z}/2\mathbb{Z}$ acting via the antipodal map. 

Now, suppose that $\Delta=\{\alpha_0,\alpha_1,\alpha_2,\alpha_3\}$, with $\alpha_0$ corresponding to the ``central'' root in the Dynkin diagram (the one connected by an edge to the other three roots, see Figure \ref{fig:Tits-diagrams}). Let us choose any partition $\Delta=\Delta_+ \cup \Delta_-$ (with non-empty $\Delta_-$) which is preserved by the action of the $\mathfrak{S}_3$ factor of $\mathrm{Gal}(EL/k)$. There are only $3$ possible choices:
\begin{align} 
\Delta_-=\{\alpha_0\}, &\; \Delta_+=\{\alpha_1,\alpha_2,\alpha_3\}; \label{eq:root-partition-1} \\ 
\Delta_+=\{\alpha_0\}, &\; \Delta_-=\{\alpha_1,\alpha_2,\alpha_3\}; \label{eq:root-partition-2} \\ 
\Delta_-=\Delta,\;\;\;\; &\; \Delta_+=\emptyset. \label{eq:root-partition-3}
\end{align}
Each choice will determine a partition of $\Sigma=\Sigma_+ \cup \Sigma_-$ into two sets of even and odd roots which is necessarily preserved also by the antipodal map $\alpha$. It follows that the whole of $\calG$ preserves this partition, thus it commutes with  $\mathrm{Inn}(g)$. This implies that $\mathrm{Inn}(g)$ is defined over $k$. The proof follows by taking $\theta=\mathrm{Inn}(g)$. 

We are left with the task of verifying that the fixed point set for the action of $\theta$ on $\mathbb{H}^7$ is $3$-dimensional. We notice first of all that $\theta$ corresponds to an element of $\mathrm{PSO}_{7,1}$, i.\ e.\ to an orientation preserving isometry of $\mathbb{H}^7$. Given that $\theta^2=\mathrm{id}$, it follows that the action of $\theta$ on the Lie algebra $\mathfrak{g}$ is diagonalisable and has eigenvalues $1$ and $-1$ with multiplicities $m(1)$ and $m(-1)$ respectively. The multiplicities can be computed easily: $\theta$ acts as the identity on the $4$-dimensional algebra $\mathfrak{t}$ and on each one-dimensional root space $L_{\alpha}$ for $\alpha \in \Sigma_+$ an even root. It also acts as $-\mathrm{id}$ on each root space $L_{\alpha}$ for $\alpha \in \Sigma_-$ an odd root.

For all the possible choices (\ref{eq:root-partition-1},~\ref{eq:root-partition-2},~\ref{eq:root-partition-3})  for $\Delta_+$ and $\Delta_-$ we obtain that $\Sigma_+$ has $8$ roots and $\Sigma_-$ has $16$ roots. It follows that we have $m(1)=12,\, m(-1)=16$. Since the resulting involution $\theta$ has order $2$, it can be represented by conjugation by a matrix $M \in \mathrm{SO}_{7,1}^{\circ}$ such that $M^2=\mathrm{id}$. Up to conjugacy in $\mathrm{SO}_{7,1}$ we may assume that $M$ is diagonal with $\pm 1$ entries on the diagonal.
The only possibility such that the action on the Lie algebra has $m(1)=12$, $m(-1)=16$ is that $M$ has $4$ entries equal to $1$ and $4$ entries equal to $-1$. Such an $M$ corresponds to a reflection along a $3$ dimensional totally geodesic subspace in $\mathbb{H}^7$. 
\qed

We now proceed to prove Theorem \ref{teo:3-dim-type-III-in-7-dim-type-III}. In what follows we will make essential use of Theorems \ref{theorem:cent}, \ref{theorem:geod} and Remark \ref{prop:invariant-trace-field-inclusion}, whose proofs we postpone to Section \ref{sec:hereditary properties} (the proofs of these theorems do not require any results from Section~\ref{sec:type3}).

\subsubsection{Proof of Theorem \ref{teo:3-dim-type-III-in-7-dim-type-III}}

Let $N$ denote the fc-subspace corresponding to the centraliser of the involution $\theta$. By Theorem \ref{theorem:cent} it has finite volume, and by Theorem \ref{theorem:geod} it is arithmetic. The rest of the proof is devoted to showing that the adjoint trace field of $N$ is not totally real. Proposition~\ref{rem:Nori-detects-type-II} then implies that $N$ is a $3$-dimensional type III arithmetic hyperbolic orbifold.

We carry over the notation from the proof of Proposition \ref{prop:inner-involutions-7-dim-type-III}. In particular, $E$ denotes the minimal field over which $\mathrm{Ext}_{E/k}\mathbf{G}$ becomes an inner form and $L$ is the field over which $\mathrm{Ext}_{L/k}\mathbf{G}$  becomes quasi-split. These fields are linearly independent over $k$, which implies that the map $\mathrm{Gal}(E/k) \times \mathrm{Gal}(L/k) \rightarrow \mathrm{Gal}(EL/k)$ given by 
$$(\phi,\eta)(x \cdot y) \mapsto \phi(x) \cdot \eta(y)$$ for $\phi \in \mathrm{Gal}(E/k)$, $\eta \in \mathrm{Gal}(L/k)$, $x \in E$ and $y \in L$ is an isomorphism. Under this map the groups $\mathrm{Gal}(E/k)$ and $\mathrm{Gal}(L/k)$ are mapped to $\mathrm{Gal}(EL/L)$ and $\mathrm{Gal}(EL/E)$ respectively.  We identify the root system of $\mathbf{G}$ relative to $\mathbf{T}$ with the $24$ vectors in $\mathbb{R}^4$ obtained via permutations in the entries of $(\pm1,\pm1,0,0)$, and the system $\Delta=\{\alpha_0,\alpha_1,\alpha_2,\alpha_3\}$ of simple roots corresponding to the $L$-defined Borel subgroup $\mathbf{B}$ is identified with 
\begin{equation*}
\alpha_0=(0,1,-1,0),\, \alpha_1=(1,-1,0,0),\, \alpha_2=(0,0,1,-1),\, \alpha_3=(0,0,1,1).
\end{equation*}

As a first step, we analyse the action of complex conjugation $\sigma$ on the $D_4$ root system of $\mathbf{G}$ relative to the torus $\mathbf{T}$. Complex conjugation induces a nontrivial automorphism of both factors, since $E$ and $L$ are both imaginary fields. On the group $\mathrm{Gal}(E/k)\cong \mathrm{Gal}(EL/L) \cong \mathfrak{S}_3$, the Galois automorphism $\sigma$ corresponds to the permutation $p$ of the simple root system $\Delta$ that exchanges two non-central roots. We may assume without loss of generality that these are $\alpha_2$ and $\alpha_3$, so that the associated map is a change of sign in the last coordinate. On $\mathrm{Gal}(L/k) \cong \mathrm{Gal}(EL/E)$, $\sigma$ corresponds to the antipodal map $a$. The full action of $\sigma$ on the $D_4$ root system is the composition $p \circ a$ which can be written as 
\begin{equation}\label{eq:conjugation-action}(x,y,z,w) \overset{\sigma}{\mapsto} (-x,-y,-z,w).\end{equation}

Concerning the ``trialitarian'' automorphism $\tau \in \mathrm{Gal}(E/k) \cong \mathrm{Gal}(EL/L)$, its action on the $D_4$ root system is given by a cyclic permutation of the roots $\alpha_1,\alpha_2,\alpha_3$:

\begin{equation}\label{eq:triality-action}
(x,y,z,w) \overset{\tau}{\mapsto} \frac{1}{2}\cdot (x+y+z+w,\;x+y-z-w,\;x-y+z-w,\;-x+y+z-w).
\end{equation}

With this information we are able to describe the Tits index of the centraliser $\mathbf{H}$ of the involution $\theta$, which is naturally a $k$-subgroup of $\mathbf{G}$. Notice that the torus $\mathbf{T}$ constructed in the proof of Proposition \ref{prop:inner-involutions-7-dim-type-III} is a maximal $k$-torus of $\mathbf{H}$.

Let us assume that the choice of the partition $\Delta=\Delta_+ \cup \Delta_-$ is the one given in \eqref{eq:root-partition-1} (the other choices yield exactly the same results). The positive roots in $\Sigma_+$ are those of the form 
\begin{equation}\label{eq:centraliser-root-system}
(\pm1,\pm1,0,0),\; (0,0,\pm1,\pm1)
\end{equation}
which form a root system of type $D_2 \times D_2 = A_1 \times A_1 \times A_1 \times A_1$ for $\mathbb{H}$. Its Dynkin diagram is given by the disjoint union of $4$ vertices labeled $\alpha,\beta,\gamma$ and $\delta$, each vertex corresponding to a pair of opposite roots:
\begin{equation*}
\alpha=\pm(0,0,1,-1),\; \beta=\pm (0,0,1,1),\; \gamma=\pm (1,-1,0,0),\; \delta= \pm(1,1,0,0).
\end{equation*}
With the actions of complex conjugation and triality given in \eqref{eq:conjugation-action} and \eqref{eq:triality-action} we see that the action of $\calG=\mathrm{Gal}(\overline{k}/k)$ on the Dynkin diagram is as in Figure \ref{fig:involution-centraliser-diagram}.

\begin{figure}[h]
    \centering
    \includegraphics[width=2.6cm]{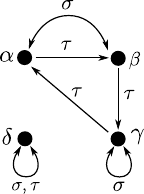}
    \caption{The Tits index (relative to the torus $\mathbf{T}$) of the centraliser $\mathbf{H}$ of the involution $\theta$. The arrows indicate the action on the Dynkin diagram of complex conjugation $\sigma$ and of the trialitarian automorphism $\tau \in \mathrm{Gal}(E/k)$.}
   \label{fig:involution-centraliser-diagram}
\end{figure}

In particular, complex conjugation $\sigma$ preserves the two pairs of roots $\{\alpha,\beta\}$ and $\{\gamma,\delta\}$.
It follows that there exists an $\mathbb{R}$-defined isomorphism of connected adjoint algebraic groups 
$\mathbf{PH}^{\circ} \cong \mathbf{L} \times \mathbf{C}$, where $\mathbf{PH}^{\circ}$ denotes the adjoint group of the identity component of $\mathbf{H}$, the group $\mathbf{L}$ corresponds to the $D_2$ root subsystem $(0,0,\pm1,\pm1)$ spanned by $\alpha$ and $\beta$, and the group $\mathbf{C}$ corresponds to the root subsystem $(\pm1, \pm1,0,0)$ spanned by $\gamma$ and $\delta$. Since the roots $\alpha$ and $\beta$ are swapped by $\sigma$, it follows that $\mathbf{L}$ has Tits symbol ${^2}D^{(1)}_{2,1}$ and is therefore isomorphic to $\mathbf{PSO}_{3,1}$ by \cite[p. 57]{Tits66}. Concerning the group $\mathbf{C}$, we notice that complex conjugation $\sigma$ acts by exchanging each root with its opposite and therefore has Tits symbol ${^1}D^{(1)}_{2,0}$, implying that $\mathbf{C}$ is $\mathbb{R}$-isomorphic to $\mathbf{PSO}_4$.

 Recall that the minimal field $E$ such that $\mathrm{Ext}_{E/k}\mathbf{G}$ is an inner form is an imaginary Galois extension of the totally real field $k$ and $\mathrm{Gal}(E/k)$ is isomorphic to the symmetric group $\mathfrak{S}_3$. Denote by $K$ the fixed field of complex conjugation $\sigma$, now interpreted as an element of $\mathrm{Gal}(E/k)$. The field $K$ is a cubic extension of $k$ and the set $S^{\infty}_{K/k}$ of embeddings of $K$ relative to $k$ contains one real embedding (corresponding to $K$) and two complex conjugate embeddings, which we denote by $\tau(K)$ and $\tau^2(K)$. The action of the trialitarian automorphism $\tau$ permutes these $3$ embeddings cyclically, while complex conjugation $\sigma$ fixes $K$ and exchanges $\tau(K)$ and $\tau^2(K)$.

It follows that the $D_2$ root systems spanned by $\{\alpha,\beta\}$ is preserved by the action of the absolute Galois group $\mathrm{Gal}(\overline{K}/k)$. The minimal field of definition for the projection of $\mathbf{PH}^{\circ}\rightarrow \mathbf{L}$ is precisely $K$ and, as shown in the proof of Theorem \ref{theorem:geod} (see Remark \ref{lemma:atf-and-projection-field-coincide}), this is the adjoint trace field of $N$. 
Since $K$ is not totally real, 
we see that $N$ is a $3$-dimensional type-III arithmetic hyperbolic orbifold.
\qed

We conclude this section with the following supplementary result:
\begin{corollary}
The invariant trace field of the $3$-dimensional type-III orbifold $N=\mathbb{H}^3/\Gamma$ of Theorem \ref{teo:3-dim-type-III-in-7-dim-type-III} is the complex embedding $\tau(K)$ of the adjoint trace field $K$. In particular, it is a cubic extension of a totally real field.
\end{corollary}

\begin{proof}
The adjoint group $\mathbf{PH}^{\circ}$ of $\mathbf{H}^{\circ}$ decomposes over the algebraic closure $\overline{k}$ of $k$ as a direct product:
\begin{equation*}
\mathbf{PH}^{\circ} \cong \mathbf{PGL}_2 \times \mathbf{PGL}_2 \times \mathbf{PSO}_4,
\end{equation*}
where the $\mathbf{PGL}_2$-factors correspond to the roots $\alpha$ and $\beta$ respectively, while the $\mathbf{PSO}_4$-factor corresponds to the pair of roots $\{\gamma,\delta\}$.
By Remark \ref{prop:invariant-trace-field-inclusion}, the invariant trace field of $N$ is the minimal field over which the projection $\mathbf{PH}^{\circ}\rightarrow \mathbf{PGL}_2$ onto the first factor is defined. The analysis of the Tits index of $\mathbf{PH}^{\circ}$ (see Figure \ref{fig:involution-centraliser-diagram}) shows that this field is precisely the fixed field for the automorphism $ \tau \sigma \tau^{-1} \in \mathrm{Gal}(E/k)$, i.\,e.\ the complex embedding $\tau(K)$ of~$K$.
\end{proof}

\section{Two kinds of totally geodesic subspaces}\label{sec:kinds-of-subspaces}

In this section, we introduce two techniques to construct totally geodesic immersions of arithmetic hyperbolic orbifolds into other arithmetic hyperbolic orbifolds. In Section \ref{sec:hereditary properties} we shall show that any totally geodesic immersion of arithmetic hyperbolic orbifolds is obtained through a combination of these two techniques. 

\subsection{Subform subspaces}

We present here a simple generalisation of the method used in \cite[Proposition 5.1]{KRS} to construct codimension-one totally geodesic embeddings of type-I arithmetic lattices.

\begin{prop}\label{sec:subform-space-type-I}
Let $\Lambda<\mathrm{PO}_f(k)$ be a type-I arithmetic lattice associated to an admissible form $f$ of signature $(m,1)$ defined over a totally real field $k$. For any integer $n>m$, there exists an admissible $k$-defined form $g$ of signature $(n,1)$ and a type-I arithmetic lattice $\Gamma < \mathrm{PO}_g(k)$ such that $N=\mathbb{H}^m/\Lambda$ is a totally geodesic suborbifold of $M=\mathbb{H}^n/\Gamma$. 
\end{prop}
\begin{proof}
Consider any $k$-defined form $h$ with the property that $h^{\sigma}$ has signature $(n-m,0)$ for any field embedding $\sigma:k\rightarrow \mathbb{R}$. Define $g$ to be the orthogonal direct sum of the quadratic spaces associated to the forms $f$ and $h$. Then $g$ is an admissible $k$-form and we obtain an inclusion of $\mathbf{O}_f$ into $\mathbf{O}_g$. By applying \cite[Proposition 2.1]{KRS}, we obtain that the arithmetic lattice $\Lambda < \mathbf{O}_f(k)$ is realised as a totally geodesic sublattice of an arithmetic lattice $\Gamma < \mathbf{O}_g(k)$. We conclude by projecting $\Lambda$ and $\Gamma$ to $\mathrm{PO}_f(k)$ and $\mathrm{PO}_g(k)$, respectively.
\end{proof}

Note that all the fc-subspaces arising as fixed point sets of involutions in $\mathrm{PO}_f(k)$ as described in \ref{sec:k-involutions-type1} fit into the description above. 

The construction of subform subspaces has a natural generalisation to type-II lattices. It is very similar to the type-I case, so we skip the details. 

\subsection{Subspaces via Weil restriction of scalars}\label{sec:weil-res-type-I}

The discussion at the end of Section \ref{sec:k-involutions-type1} suggests a technique to construct further examples of totally geodesic immersions of type-I lattices into other type-I lattices, of perhaps much higher dimension.

\begin{prop}\label{prop:res_of_scalars_typeI}
Let $\Lambda<\mathrm{PO}_f(K)$ be a type-I arithmetic lattice associated to an admissible form of signature $(n,1)$ defined over a totally real field $K$. Let $k$ be a subfield of $K$ such that $[K:k]=d$. Then $\Lambda$ is a totally geodesic sublattice of a type-I arithmetic lattice $\Gamma < \mathrm{PO}_g(k)$ associated to an admissible $k$-defined form $g$ of signature $(d(n+1)-1,1)$. 
\end{prop}

\begin{proof}
By the primitive element theorem we can suppose that $K=k(\alpha)$ and that $p(x) \in k[x]$ is the minimal polynomial of $\alpha$ over $k$, with $d$ distinct real roots $\alpha=\alpha_0, \alpha_1,\dots, \alpha_{d-1}$. The field $K$ is then isomorphic to an abstract extension of $k$:
$$K\cong k[x]/(p(x)).$$

Let us denote by $S^{\infty}_{K/k}$ the set of all field embeddings $\sigma:K\rightarrow \mathbb{R}$ which restrict to the identity on $k$. There are precisely $d=[K:k]$ such embeddings, so that $S^{\infty}_{K/k}=\{\sigma_0,\sigma_1,\dots,\sigma_{d-1}\}$. We assume that $\sigma_i(x)=\alpha_i$, so that $\sigma_0=\mathrm{id}|_{K}$ and let $\mathrm{Gal}(\overline{K}/k)$ denote the Galois group of the Galois closure $\overline{K}$ of the extension $K/k$, which naturally acts faithfully and transitively on $S^{\infty}_{K/k}$.

Let $(V,f)$ be a $K$-defined admissible quadratic space, i.e.\ $V$ is an $(n+1)$-dimensional vector space over $K$ and $f$ is an admissible symmetric bilinear form of signature $(n,1)$. By fixing a basis for $V$, we may assume that $V \cong K^{n+1}$.

For each embedding $\sigma \in S^{\infty}_{K/k}$, we build a vector space $V^{\sigma}$ of dimension $n+1$ over $\sigma(K)$ as follows:
\begin{itemize}
    \item $V^{\sigma}=\{v^{\sigma}|v \in V\}$,
    \item $v^{\sigma}+w^{\sigma}=(v+w)^{\sigma}$,
    \item $\sigma(\lambda)\cdot v^{\sigma}=(\lambda \cdot v)^{\sigma}, \lambda \in K$,
\end{itemize} where it is understood that $V=V^{\sigma_0}$.
Note that $V^{\sigma}$ can be naturally interpreted as an $(n+1)$-dimensional vector space over the abstract field $k[x]/(p(x))$ by requiring that $$[q(x)]\cdot v^{\sigma}= q(\sigma(x))\cdot v^{\sigma}.$$

We now build a vector space $W$ over $k[x]/(p(x))$ of dimension $(n+1)\cdot d$ by considering the direct sum
\begin{equation}\label{eq:W-decomposition-typeI}
W= V \oplus V^{\sigma_1} \oplus \dots \oplus V^{\sigma_{d-1}}.\end{equation}

Notice that the group $\mathrm{Gal}(\overline{K}/k)$ acts $k$-linearly on the vector space $W$. If $\sigma \in \mathrm{Gal}(\overline{K}/k)$ and $v^{\sigma_i} \in V^{\sigma_i}$, then $$\sigma(v^{\sigma_i})=v^{\sigma\circ \sigma^i} \in V^{\sigma\circ \sigma^i}.$$

Hence we can define the $k$-subspace $\mathrm{Res}_{K/k}\,V$ of fixed points of the action of $\mathrm{Gal}(\overline{K}/k)$:
$$
\mathrm{Res}_{K/k}\,V=\{v+v^{\sigma_1}+\dots+v^{\sigma_{d-1}}|v \in V\}.
$$

Furthermore, for each $\sigma \in S^{\infty}_{K/k}$, a $\sigma(K)$-defined symmetric  bilinear form $f^{\sigma}$ on $V^{\sigma}$ is given by
$$f^{\sigma}(v^{\sigma},w^{\sigma})=(f(v,w))^\sigma.$$ 

By endowing each factor of the direct sum decomposition \eqref{eq:W-decomposition-typeI} of $W$ with the corresponding form $f^{\sigma_i}$ and imposing the various factors to be pairwise orthogonal, we define a symmetric bilinear form $h$ on $W$ so that
$$(W,h) = (V,f) \oplus (V^{\sigma_1},f^{\sigma_1}) \oplus \ldots \oplus (V^{\sigma_{d-1}},f^{\sigma_{d-1}}).$$

We now claim that the restriction of the bilinear form $h$ to the $k$-subspace $\mathrm{Res}_{K/k}\,V$ is a $k$-defined symmetric bilinear form, which we denote by $\mathrm{Res}_{K/k}\,f$. Thus $(\mathrm{Res}_{K/k}\,V, \mathrm{Res}_{K/k}\,f)$ is a $k$-defined quadratic space which we think of as the restriction of scalars of the $K$-defined quadratic space $(V, f)$ to $k$.
In order to prove the claim it is sufficient to notice that $h:W \times W \rightarrow k[x]/(p(x))$ is $k$-linear  (where $W$ and $k[x]/(p(x))$ are now interpreted as $k$-vector spaces) and that for all $v, w \in V$: 
$$h\left( \sum_{i=0}^{d-1} v^{\sigma_i},\sum_{j=0}^{d-1} w^{\sigma_j} \right)=\sum_{i=0}^{d-1}f^{\sigma_i}(v^{\sigma_i},w^{\sigma_i})=\sum_{i=0}^{d-1}(f(v,w))^{\sigma_i}=\mathrm{tr}(f(v,w)) \in k,$$ where $\tr:K\rightarrow k$ is the trace of the field extension $K/k$.

We note that the real vector space $(\mathrm{Res}_{K/k}\,V)\otimes_k \mathbb{R}$ admits a direct sum decomposition as:
\begin{equation}\label{eq:direct-sum-decomposition} (\mathrm{Res}_{K/k}\,V)\otimes_k \mathbb{R}= (V\otimes_K \mathbb{R})\oplus (V^{\sigma_1}\otimes_{\sigma_1(K)}\mathbb{R}) \oplus \dots\oplus (V^{\sigma_{d-1}}\otimes_{\sigma_{d-1}(K)} \mathbb{R}).
\end{equation}
It is important to notice that the subspace $V^{\sigma_i} \otimes \mathbb{R}$ does not correspond to the tensor product of $\mathbb{R}$ with a $k$-subspace of $\mathrm{Res}_{K/k}\,V$. The spaces $V^{\sigma_i}$, $i=0,\dots,d-1$, are each defined over a different field embedding of $K$, so the direct sum decomposition \eqref{eq:direct-sum-decomposition} can be defined at best over the Galois closure of $K/k$.

Let us denote the $k$-defined form $\mathrm{Res}_{K/k}\,f$ by $g$.
The group of real points of $\mathrm{Res}_{K/k}\mathbf{O}_f$ is isomorphic to
$$\mathbf{O}_f(\mathbb{R})\times \mathbf{O}_{f^{\sigma_1}}(\mathbb{R})\times  \dots\times \mathbf{O}_{f^{\sigma_{d-1}}}(\mathbb{R})\subset\mathbf{O}_{g}(\mathbb{R}),$$ which implies that the group $\mathrm{Res}_{K/k}\mathbf{O}_f(\mathbb{R})$ corresponds to the subgroup of $\mathbf{O}_{g}(\mathbb{R})$ that preserves the direct sum decomposition \eqref{eq:direct-sum-decomposition}.
This fact translates to a $k$-defined inclusion of $\mathrm{Res}_{K/k} \mathbf{O}_f$ into $\mathbf{O}_{g}$, and we can interpret the form $f$ as the restriction to $V$ of the $k$-defined form $\mathrm{Res}_{K/k}\,f$ of signature $(d(n+1)-1,1)$. The admissibility of $\mathrm{Res}_{K/k}\,f$ is easy to check. Indeed, any field embedding $\eta:k \rightarrow \mathbb{R}$ extends to $K$. The form $(\mathrm{Res}_{K/k}\,f)^{\eta}$ can be represented over $\mathbb{R}$ as $f^{\eta}\oplus f^{\eta \circ \sigma_1}\oplus\dots\oplus f^{\eta \circ \sigma_{d-1}}$, which will be positive definite for any $\eta \neq \mathrm{id}|_{k}$ due to the fact that $f$ is admissible.

We thus obtain that $\mathrm{PO}_{g}(\mathcal{O}_k)$ is a type-I arithmetic lattice with field of definition $k$. The stabiliser of the subspace $V\otimes \mathbb{R} \subset \mathrm{Res}_{K/k}(V) \otimes \mathbb{R}$ is commensurable with $\mathrm{Res}_{K/k}\mathbf{O}_f (\mathcal{O}_k)$, and the projection of this group to $\mathbf{PO}_f(\mathbb{R})$ is commensurable with $\mathrm{PO}_f(\mathcal{O}_K)$. More generally, by applying \cite[Proposition 2.1]{KRS} with $G=\mathrm{Res}_{K/k}\mathbf{O}_f$ and $H=\mathbf{O}_{g}$, we see that any arithmetic lattice $\Lambda < \mathbf{O}_f(K)$ is realised as a totally geodesic sublattice of an arithmetic lattice $\Gamma < \mathbf{O}_{g}(k)$. We conclude by projecting $\Lambda$ and $\Gamma$ to $\mathrm{PO}_f(K)$ and $\mathrm{PO}_{g}(k)$, respectively.
\end{proof}

\begin{remark}
The statement of Proposition \ref{prop:res_of_scalars_typeI} contradicts Proposition~9.1 of \cite{Mey17}. Indeed the fields of definition of $N=\mathbb{H}^n/\Lambda$ and $M=\mathbb{H}^m/\Gamma$ are different, and $N$ is not a subform subspace of $M$ as defined in \cite[Construction 4.11]{Mey17}. We will see in Corollary \ref{cor:max-dim-subform-space} that this issue can be corrected by an extra assumption on the codimension of $N$ in $M$.
\end{remark}

\begin{remark}
The condition $\Lambda< \mathrm{PO}_f(K)$ is not very restrictive. By \cite[Lemma 4.5]{ERT}, if $\Lambda<\mathbf{PO}_f (K)$ is a type-I arithmetic lattice, then the finite-index subgroup $\Lambda^{(2)}$ generated by the squares of the elements of $\Lambda$ lies in $\mathrm{PO}_f(K)$.
\end{remark}

Proposition \ref{prop:res_of_scalars_typeI} allows to construct plenty of examples of totally geodesic immersions of compact arithmetic hyperbolic orbifolds into non-compact ones.

\begin{corollary}
Let $N=\mathbb{H}^m/\Lambda$, $m\geq 2$ be a compact type-I arithmetic hyperbolic orbifold such that $\Lambda<\mathbf{O}_f(K)$, with $f$ being an admissible form of signature $(m,1)$ defined over a totally real algebraic number field $K$ such that $[K:\mathbb{Q}]=d>1$. Then $N$ is realised as a totally geodesic immersed suborbifold in a non-compact type-I arithmetic hyperbolic orbifold of dimension $n=d\cdot(m+1)-1$.
\end{corollary}

\begin{proof}
Apply Proposition \ref{prop:res_of_scalars_typeI} with $k=\mathbb{Q}$ in order to build a totally geodesic immersion of $N$ into $M=\mathbb{H}^n/\Gamma$, where $\Gamma<\mathbf{O}_g(\mathbb{Q})$ is arithmetic and $g$ is an admissible, $\mathbb{Q}$-defined form of signature $(d\cdot(m+1)-1,1)$. Since $d\cdot(m+1)\geq 6$ the form $g$ is isotropic by Meyer's theorem, and thus $M$ is non-compact.
\end{proof}

We now turn our attention to the case of type-II lattices and describe embeddings via Weil restriction of scalars in this setting.

\begin{prop}\label{prop:Weil-restriction-type-II}
Let $\Lambda < \mathrm{PU}_F(D)$ be a type I or II arithmetic lattice associated to an admissible skew-Hermitian form $F$ of signature $(2m-1, 1)$ defined over a quaternion algebra $D$ over a totally real number field $K$. Let $k$ be a subfield of $K$ such that $[K:k] = d$ and $D \cong D' \otimes K$ for some quaternion algebra $D'$ over $k$. Then $\Lambda$ is a totally geodesic sublattice of an  arithmetic lattice $\Gamma < \mathrm{PU}_G(D')$ associated to an admissible $k$-defined skew-Hermitian form $G$ of signature $(2 d m - 1, 1)$ defined over $D'$.

Moreover, if $D'\cong M_2(k)$ then $\Gamma$ and $\Lambda$ are type-I lattices. Otherwise, $D'$ is a division algebra and $\Gamma$ is a type-II lattice. In this case, $\Lambda$ is a type-II lattice if $D$ is a division algebra, while it is a type-I lattice if $D \cong M_2(K)$.
\end{prop}

\begin{proof}
We follow the same strategy as in the proof of Proposition \ref{prop:res_of_scalars_typeI}, and carry over the notation. For each embedding $\sigma \in S^{\infty}_{K/k}$, we define the conjugate quaternion algebra $D^{\sigma}$ over $\sigma(K)$ as follows:
\begin{itemize}
    \item $D^{\sigma} = \{a^{\sigma} |\, a \in D \}$,
    \item $a^{\sigma} + b^{\sigma} = (a+b)^{\sigma}, a^{\sigma} \cdot b^{\sigma}= (a\cdot b)^{\sigma}$,
    \item $\sigma(\lambda) \cdot a^{\sigma} = (\lambda \cdot a)^{\sigma}.$
\end{itemize}

Notice that each algebra $D^{\sigma}$ is isomorphic to $D'\otimes \sigma(K)$, and $(q\otimes \lambda)^{\sigma}=q \otimes \sigma(\lambda)$ for all $q \in D', \lambda \in K$.

Also, each $D^{\sigma}$ can be naturally interpreted as a quaternion algebra over the abstract field $k[x]/(p(x))$. Let us consider the direct sum
$$
D \oplus D^{\sigma_1} \oplus \ldots \oplus D^{\sigma_{d-1}},
$$ 
which contains the $k$-subalgebra $\mathrm{Res}_{K/k}\,D$ of elements invariant under the action of $\mathrm{Gal}(\overline{K}/k)$ that maps $a^{\sigma_i} \in D^{\sigma_i}$ to $a^{\sigma \circ \sigma_i} \in D^{\sigma \circ \sigma_i}$ for each $\sigma \in \mathrm{Gal}(\overline{K}/k)$. Observe that $\mathrm{Res}_{K/k}\,D$ has the structure of a right $D'$-module of rank $d$.

Now let us build the right $D^{\sigma}$-module $(D^{\sigma})^m$ of rank $m$ and consider the direct sum
\begin{equation}\label{eq:W-decomposition-typeII}
W = D^m \oplus (D^{\sigma_1})^m \oplus \ldots \oplus (D^{\sigma_{d-1}})^m.
\end{equation}
We define the subset $\mathrm{Res}_{K/k}\,D^m \subset W$ of fixed points under the action of $\mathrm{Gal}(\overline{K}/k)$:
$$
\mathrm{Res}_{K/k}\,D^m = \{x+x^{\sigma_1} + \ldots + x^{\sigma_{d-1}}|\, x \in D^m \}.
$$ 
Note that $\mathrm{Res}_{K/k}\,D^m \cong (\mathrm{Res}_{K/k}\,D)^m$ is naturally a right $\mathrm{Res}_{K/k}(D)$-module of rank $m$ and is thus a right $D'$-module of rank $d \cdot m$.

For each $\sigma \in S^{\infty}_{K/k}$, let $F^\sigma$ be the skew-Hermitian form on $(D^\sigma)^m$ defined by
$$
F^\sigma(a^\sigma, b^\sigma) = (F(a, b))^\sigma.
$$

By endowing each factor of the direct sum decomposition \eqref{eq:W-decomposition-typeII} of $W$ with the corresponding form and imposing the various factors to be pairwise orthogonal we define a skew-Hermitian form $H$ on $W$ with values in a quaternion algebra over the field $k[x]/(p(x))$.

We claim that the restriction of $H$ to the right $D'$-module $\mathrm{Res}_{K/k}\,D^m$ is a skew-Hermitian form, which we interpret as the restriction of scalars of the form $F$ and denote by $\mathrm{Res}_{K/k}\,F$. In order to prove the claim we proceed as follows. It is clear that $H$ is a sesquilinear form on the right $D'$-module $\mathrm{Res}_{K/k}\,D^m$. We claim that on this set it actually takes values in $D'$.

Let us define the following $k$-linear trace function $\mathrm{Tr}:D\rightarrow D'$ by setting $\mathrm{Tr}(q \otimes \lambda) = q \otimes \mathrm{tr}(\lambda)$. 
Let $\sigma_0 = \mathrm{id}$, so that we have
$$
H\left( \sum_{i=0}^{d-1}x^{\sigma_i},\sum_{j=0}^{d-1}y^{\sigma_j} \right) = \sum_{i=0}^{d-1}F^{\sigma_i}(x^{\sigma_i}, y^{\sigma_i}) = \sum_{i=0}^ {d-1}(F(x,y))^{\sigma_i}=\mathrm{Tr}(F(x, y)) \in D'. 
$$

Since the form $F$ is skew-Hermitian and $(q\otimes \lambda)^* = q^* \otimes \lambda$ for all $q \in D'$ and  $\lambda \in K$, it follows that the form $H$ is also skew-Hermitian.

We now notice that $\mathrm{Res}_{K/k}\,D^m \otimes \mathbb{R}$ is a right $D' \otimes \mathbb{R}$-module of rank $d m$ and it admits the following decomposition:
$$\mathrm{Res}_{K/k}\,D^m \otimes_k \mathbb{R} = (D\otimes_K \mathbb{R})^m \oplus (D^{\sigma_1}\otimes_{\sigma_1(K)} \mathbb{R})^m \oplus \ldots \oplus (D^{\sigma_{d-1}}\otimes_{\sigma_{d-1}(K)} \mathbb{R})^m.$$

Denote the form $\mathrm{Res}_{K/k}\, F$ by $G$.
The group of real points of $\mathrm{Res}_{K/k}\mathbf{U}_F$ is isomorphic to
\begin{small}
$$
\mathbf{U}_F(D \otimes \mathbb{R}) \times \mathbf{U}_{F^{\sigma_1}}(D^{\sigma_1} \otimes \mathbb{R})\times \ldots \times \mathbf{U}_{F^{\sigma_{d-1}}}(D^{\sigma_{d-1}} \otimes \mathbb{R}) \subset \mathbf{U}_G(D'\otimes \mathbb{R}),
$$
\end{small}and the inclusion $\mathrm{Res}_{K/k}\,\mathbf{U}_F \subset \mathbf{U}_G$ is defined over $k$. As in the case of type~I lattices, the form $F$ is now interpreted as the restriction to $D^m$ of the form $G=\mathrm{Res}_{K/k}(F)$, which clearly has signature $(2dm-1,1)$ since $F^{\sigma}$ has signature $(2m,0)$ for any non-identity $\sigma \in \mathrm{Gal}(K/k)$.

The admissibility of the form $\mathrm{Res}_{K/k}\,F$ easily follows from the admissibility of $F$. Indeed, if $\eta:k\rightarrow \mathbb{R}$ is a non-identity field embedding of $k$ then it can be extended to $K$, and hence
$$(\mathrm{Res}_{K/k}\,F)^{\eta} = F^{\eta} \oplus F^{\eta \circ \sigma_1}\oplus \dots\oplus F^{\eta \circ \sigma_{d-1}}$$ has signature $(2dm,0)$, since every factor has signature $(2m,0)$.

The conclusion is now straightforward: if $O$ is an order in $D'$, then $O \otimes \mathcal{O}_K$ is an order in $D$ and we see that $\mathrm{U}_G(O)$ is an arithmetic lattice defined over $k$. The stabiliser of the subspace $(D\otimes_K \mathbb{R})^m\subset \mathrm{Res}_{K/k}\,D^m$ is commensurable with $\mathrm{Res}_{K/k}\,\mathbf{U}_F (\mathcal{O}_k)$, and its projection into $\mathbf{U}_F (\mathbb{R})=\mathrm{U}_F(D \otimes \mathbb{R})$ is commensurable with $\mathbf{U}_F (\mathcal{O}_K)$. By applying \cite[Proposition 2.1]{KRS} with $\mathrm{Res}_{K/k}\,\mathbf{U}_F<\mathbf{U}(G)$, we see that any arithmetic lattice $\Lambda<\mathrm{U}_F(D)$ is realised as a totally geodesic sublattice of an arithmetic lattice $\Gamma < \mathrm{U}_G(D')$. The proof of the first part of Proposition \ref{prop:Weil-restriction-type-II} follows by choosing the skew-Hermitian form $G$ to be $\mathrm{Res}_{K/k}\,F$ and by projecting $\Lambda$ and $\Gamma$ to $\mathrm{PU}_F(K)$ and $\mathrm{PU}_G(k)$, respectively.

For the second part of the statement, we notice that if $D'\cong M_2(k)$ then $D\cong D' \otimes K \cong M_2(K)$. From the discussion in Section \ref{sec:unitary-lattices} it follows that in this case both $\Lambda$ and $\Gamma$ are type-I lattices. If $D'$ is a division algebra then $\Gamma$ is a type-II lattice. Thus there are two possible cases:
\begin{enumerate}
    \item $D = D' \otimes \mathbb{K}$ is a division algebra and $\Lambda$ is a type-II lattice; 
    \item $D\cong M_2(K)$ and $\Lambda$ is a type-I lattice.
\end{enumerate}
\end{proof}

\begin{remark}
As in the case of type-I lattices, it is not very restrictive to suppose that the type-II lattice $\Lambda<\mathbf{PU}_F(K)$ is contained in $\mathrm{PU}_F(D)$. Indeed, we have that the finite-index subgroup $\Lambda^{(2)} < \Lambda$ is a subgroup of $\mathrm{PU}_F(D)$ since, by Lemma~\ref{lemma:ERT-type-II}, $g^2 \in \mathrm{PU}_F(D)$ for all $g \in \Lambda$.
\end{remark}
 
\section{Totally geodesic immersions of (quasi-)arithmetic hyperbolic lattices}\label{sec:hereditary properties}
In this section we analyse the relation between the adjoint trace field and the ambient group of a (quasi-)arithmetic lattice and the adjoint trace field and ambient group of a totally geodesic sublattice (Theorem~\ref{theorem:geod}). This will allow us to generalise the examples of totally geodesic immersions constructed in Sections \ref{sec:subform-space-type-I} and \ref{sec:weil-res-type-I}, define the notions of subform subspaces and Weil restriction subspaces of arithmetic hyperbolic lattices, and prove Theorem~\ref{teo:embeddings-hyperbolic-orbifolds}. We begin with proving Theorem~\ref{theorem:cent}, which states that fc-subspaces have finite volume. 

\subsection{Proof of Theorem~\ref{theorem:cent}}\label{sec:5.3}
First, we remark that the fixed point set of any collection of isometries acting on $\HH^n$ is either empty or it is a totally geodesic subspace of $\HH^n$ (possibly a point).
Since by the hypothesis the group $F$ commensurates $\Gamma$, the group $$\Gamma' = \bigcap\limits_{g\in F} g\Gamma g^{-1}$$ is a finite-index subgroup of $\Gamma$. Moreover, it is normalised by the finite group $F$. Hence, the group $\Gamma''$ generated by $\Gamma'$ and $F$ is a lattice in $\Isom(\HH^n)$, commensurable with $\Gamma$, and clearly $F < \Gamma''$.
By \cite[Lemma 4.4]{OV}, the mapping $\phi: U/C_{\Gamma''}(F) \to \HH^n/ \Gamma''$ is proper, where $C_{\Gamma''}(F)$ denotes the centraliser of $F$ in $\Gamma''$ and $U = \Fix(F)$ is a totally geodesic subspace of dimension $m$. Without loss of generality, we can suppose that $F=\{g \in \Gamma'' |\, gx=x\ \forall x \in U\}$.

Since $F$ is finite, the centraliser $C_{\Gamma''}(F)$ has finite index in the normaliser $N_{\Gamma''}(F)$, and the latter is easily seen to be the stabiliser $\mathrm{Stab}_{\Gamma''}(U)$ of $U$ in $\Gamma''$. Hence, also the natural map $\phi': U/\mathrm{Stab}_{\Gamma''}(U) \to \HH^n/ \Gamma''$ is proper. This means that $U$ projects down to a properly immersed totally geodesic suborbifold $S$ in the orbifold $M = \HH^n / \Gamma''$. Since the map $\phi'$ is proper, the cusps of $S$ correspond to cusps of $M$, and there are no accumulation points of $S$ inside of $M$. 

Thus, once $M$ has finite volume, the orbifold $S$ has finite volume as well, except for the excluded case of $m = \mathrm{dim}(U)= 1$, where $S$ could be an infinite geodesic. For one such example, let $\Gamma''$ be a reflection group in the facets of an ideal Coxeter polyhedron $\mathcal{P}$ in $\mathbb{H}^n$ and $F$ be the finite group of reflections in the facets of $\mathcal{P}$ which intersect along an edge $e$ having an ideal vertex. The length of the edge $e$ is clearly infinite.

Finally, by commensurabilty of the lattices $\Gamma$ and $\Gamma''$, we deduce that the stabilisers of $U$ in $\Gamma$ and $\Gamma''$ are commensurable, and therefore also $\mathrm{Stab}_{\Gamma}(U)$ is a lattice acting on $U$. We notice that $\Gamma$ is uniform if and only if it does not contain parabolic elements, and in this case $\mathrm{Stab}_{\Gamma}(U)$ does not contain parabolic elements either. \qed

\subsection{Proof of Theorem~\ref{theorem:geod}} 
We remark that arithmeticity of totally geodesic suborbifolds of arithmetic orbifolds is already known (see \cite[Proposition 15.2.2]{BC-Asterisque}). Here we refine the analysis in order to control the behaviour of the adjoint trace fields.

Let $M = \HH^n/\Gamma$ be an arithmetic hyperbolic $n$-orbifold with adjoint trace field $k$ and ambient group $\G$. 
We are assuming that $M$ contains a proper totally geodesic suborbifold $N=\mathbb{H}^m/\Lambda$ of dimension $\geq 2$ and therefore $M$ cannot be a $3$-dimensional type-III orbifold. As such its adjoint trace field $k$ is totally real and the ambient group $\mathbf{G}$ is admissible (see Section \ref{sec:3-dim-arithmetic} and Corollary \ref{cor:type-III-non-admissible}).
We have that $\Gamma < \G(k)$ and is commensurable with $\G(\OOO_{k})$. Recall that the commensurator of $\Gamma$ in $G = \G(\mathbb{R})\cong \mathbf{PO}_{n,1}(\R)$ is precisely $\G(k)$.

Let $m\geq 2$ and suppose that $N \subset M$  is obtained as the quotient of an $m$-dimensional totally geodesic subspace $U \subset \mathbb{H}^n$. We have that $\mathrm{Stab}_{\Gamma}\,U$ acts as a lattice on $U$.
Let us consider the stabiliser $H=\mathrm{Stab}_{\mathbf{G}(k)}(U)$ of the subspace $U$ in the commensurator of $\Gamma$. We denote by $\mathbf{H}$ its Zariski closure in $\G$, and call this group the {\itshape rational stabilizer} of $N$ in $M$. 
The condition $M \in \mathrm{Stab}_{\mathbf{G}(\mathbb{R})}(U)$ is $\mathbb{R}$-polynomial in the coefficients of $M$, since it corresponds to preserving the $\mathbb{R}$-subspace spanned by $U$. We thus see that $\mathrm{Stab}_{\mathbf{G}(\mathbb{R})}(U)$ is Zariski closed, and therefore it contains $\mathbf{H}(\mathbb{R})$.
By repeating the argument from \cite[Proposition 15.2.2]{BC-Asterisque} verbatim over the field $k$ we obtain that $\mathbf{H}$ is $k$-defined, and is therefore an algebraic $k$-subgroup of $\G$. 

The natural projection map $$\mathrm{Stab}_{\mathbf{G}(\mathbb{R})}(U) \cong \frac{\mathbf{O}_{m,1}(\mathbb{R}) \times \mathbf{O}_{n-m}(\mathbb{R})}{\langle(-\mathrm{id},-\mathrm{id}) \rangle} \rightarrow \mathbf{PO}_{m,1}(\mathbb{R})$$ restricts to a Lie group morphism $\mathbf{H}(\mathbb{R})\rightarrow \mathbf{PO}_{m,1}(\mathbb{R})$. This induces a morphism of real algebraic groups
\begin{equation}\label{eq:Borel-density-projection-map}
\pi:\mathbf{H} \rightarrow \mathbf{PO}_{m,1}
\end{equation} which, by Borel's density theorem \cite{Borel-density}, maps the connected component of the identity $\mathbf{H}^{\circ}$ surjectively onto $\mathbf{PO}_{m,1}^{\circ}$. We denote the kernel of $\pi$ by $\mathbf{C}$. The group $\mathbf{C}(\mathbb{R})$ is a closed subgroup of the compact group 
$$\mathrm{Fix}(U)=\{g\in \G(\mathbb{R})|\,g(x)=x\;\; \forall x \in U \}\cong \mathrm{O}_{n-m},$$ 
and is therefore compact.

We notice that the group $\mathbf{H}$ is not necessarily connected, nor semisimple. In fact it can be shown that its identity component $\mathbf{H}^{\circ}$ is a reductive group. 
Indeed the unipotent radical $\mathrm{R}_u(\mathbf{H}^{\circ})$ of $\mathbf{H}^{\circ}$ projects via the morphism $\pi$ to the unipotent radical of $\mathbf{PO}_{m,1}^{\circ}$. However the latter is trivial, and therefore $\mathrm{R}_u(\mathbf{H}^{\circ})$ is a subgroup of $\mathbf{C}$. Since $\mathbf{C}(\mathbb{R})$ is compact, it contains no unipotent elements, and thus $\mathrm{R}_u(\mathbf{H}^{\circ})$ is trivial.

We therefore have that the commutator subgroup $\mathbf{H}'=[\mathbf{H}^{\circ},\mathbf{H}^{\circ}]$ of $\mathbf{H}^\circ$ is a semisimple $k$-defined subgroup of $\mathbf{H}^{\circ}$, and $\mathbf{H}^{\circ}$ is an almost-direct product of $\mathbf{H}'$ and the identity component $\mathbf{Z}$ of its center \cite[Theorem 2.4]{Pla-Rap}. Since the center of $\mathbf{PO}_{m,1}^{\circ}$ is trivial, we have that $\pi(\mathbf{Z})$ is trivial and therefore the restriction 
\begin{equation}\label{eq:Borel-density-semisimple}\pi:\mathbf{H}'\rightarrow \mathbf{PO}_{m,1}^{\circ}\end{equation} 
of the morphism $\pi$ to $\mathbf{H}'$ is still surjective.

We now observe that the group $\mathbf{H}'$ is admissible. Indeed for any non-identity embedding $\sigma:k\rightarrow \mathbb{R}$ we have that $\mathbf{H}'^{\sigma}(\mathbb{R})$ is realized as a closed subgroup of the compact group $\mathbf{G}^{\sigma}(\mathbb{R})$, and is therefore compact.

Thus, if $\Gamma<\mathbf{G}(k)$ is arithmetic then $\Lambda=\pi(\mathrm{Stab}_{\Gamma}(U))\cap \mathbf{PO}_{m,1}(\mathbb{R})^{\circ}$ is commensurable with the image of $\mathbf{H}'(\mathcal{O}_k)\cap \mathbf{H}'(\mathbb{R})^{\circ} $ under the map $\pi$, and is therefore arithmetic in $\mathbf{PO}_{m,1}(\mathbb{R})$. If $\Gamma<\mathbf{G}(k)$ is only assumed to be quasi-arithmetic we still have that $\Lambda$ is a lattice contained in $\pi(\mathbf{H}'(k)\cap \mathbf{H}'(\mathbb{R})^{\circ})$, and thus is quasi-arithmetic.

We now prove that when $N$ is arithmetic of type I or type II we have that $k \subset K$, where $K$ is the adjoint trace field of $N$. We call the adjoint group $\mathbf{PH}^{\circ}$ of $\mathbf{H}^{\circ}$ the {\itshape semisimple rational stabilizer} of $N$ in $M$, and we
consider its decomposition as a direct product of $\mathbb{R}$-simple adjoint groups \cite[p.\ 46]{Tits66}.
The kernel of the restriction of the map $\pi$ to $\mathbf{H}^{\circ}$ is a normal subgroup, which implies that its projection to a factor $\mathbf{F}$ of the decomposition of $\mathbf{PH}^{\circ}$ is either trivial or all of $\mathbf{F}$. 
It follows that the kernel of $\pi$ projects to the product of a finite set of compact simple factors $\mathbf{C}_1 \times \cdots \times \mathbf{C}_l$. Since the group $\mathbf{PO}_{m,1}^{\circ}$ is $\mathbb{R}$-simple, we see that it is a factor of the decomposition of $\mathbf{PH}^{\circ}$ into $\mathbb{R}$-simple factors. In fact, it is the unique factor whose real points form a noncompact Lie group, and we obtain an isomorphism of algebraic $\mathbb{R}$-groups
\begin{equation}\label{eq:R-simple-decomposition}
    \mathbf{PH}^{\circ} \cong \mathbf{C}_1 \times \cdots \times \mathbf{C}_l \times \mathbf{PO}_{m,1}^{\circ},
\end{equation} with the map $\pi$ inducing the projection onto the last factor.

Now we consider the decomposition of the connected, adjoint group $\mathbf{PH}^{\circ}$  as a direct product of $k$-simple adjoint groups. There is precisely one factor $\mathbf{H}_0$ whose group of $\mathbb{R}$-points is noncompact and contains the $\mathbf{PO}_{m,1}^{\circ}$-factor as a normal subgroup. Moreover, by \cite[\S 3.1.2.]{Tits66} there exists a finite field extension $K'\supset k$ and an absolutely simple $K'$-group $\mathbf{D}$ such that $\mathbf{H}_0 = \mathrm{Res}_{K'/k}\, \mathbf{D}$. Since $\mathbf{H}_0$ is admissible for $\mathrm{PO}_{m,1}^{\circ}$ also $\mathbf{D}$ is, and moreover we see that the lattice $\Lambda$ is naturally identified with a subgroup of $\mathbf{D}({K'})$.

If the dimension $m$ of $N$ is different from $3$ we have that $\mathbf{PO}_{m,1}^{\circ}$ is absolutely simple. Then we see that $\mathbf{D}$ is a $K'$-form of $\mathbf{PO}_{m,1}^{\circ}$ and by \cite[Lemma 2.6]{PR} or Proposition~\ref{prop:adj-trace-of-QA} (which applies to the quasi-arithmetic case as well) we have that the adjoint trace field $K$ of $\Lambda$ equals $K'$ and that $\mathbf{D}$ is $K$-isomorphic to the identity component $\mathbf{L}^{\circ}$ of the real ambient group $\mathbf{L}$ of $\Lambda$. This implies that $K\supset k$.

If $m=3$ we see that $\mathbf{D}$ is a $K'$-form of $\mathbf{PGL}_2$, and again by \cite[Lemma 2.6]{PR} or Proposition~\ref{prop:adj-trace-of-QA} (in the quasi-arithmetic case) we have that the invariant trace field $L$ of $\Lambda$ equals $K'$ and $\mathbf{D}$ is $L$-isomorphic to the complex ambient group $\mathbf{PGL}_A$ of $\Lambda$. This implies that $L\supset k$. If $N=\mathbb{H}^3/\Lambda$ is not a type-III orbifold then by Corollary \ref{cor:subfield} we have that the invariant trace field $L$ is an imaginary quadratic extension of the totally real field $K=L\cap \mathbb{R}$, and moreover $K$ is precisely the adjoint trace field of $\Lambda$. Since $k$ is a real field and $k\subset L$ we immediately obtain that $k\subset K$. \qed

\medskip

We now record two by-products of the proof of Theorem \ref{theorem:geod}. We will use the second one in the proof of Theorem \ref{teo:embeddings-hyperbolic-orbifolds}.

\begin{remark}\label{prop:invariant-trace-field-inclusion}
Let $N=\mathbb{H}^3/\Lambda$ be a totally geodesic suborbifold of an $n$-dimensional arithmetic orbifold $M=\mathbb{H}^n/\Gamma$. The adjoint trace field $k$ of $M$ is contained in the invariant trace field $L$ of $N$, and the latter (which is defined up to complex conjugation) equals the minimal field of definition of the natural projection map from the identity component $\mathbf{H}^{\circ}$ of the rational stabilizer into one of the two absolutely simple factors of the semisimple rational stabilizer $\mathbf{PH}^{\circ}$ that are isomorphic to $\mathbf{PGL}_2$. Moreover, the complex ambient group $\mathbf{PGL}_A$ of $N$ is naturally identified with this $\mathbf{PGL}_2$-factor, and the group $\mathrm{Res}_{L/k}\,\mathbf{PGL}_A$ is $k$-isomorphic to a $k$-simple factor of $\mathbf{PH}^{\circ}$.  
\end{remark}

\begin{remark}\label{lemma:atf-and-projection-field-coincide}
Let $K$ be the adjoint trace field of the totally geodesic suborbifold $N \subset M$ in Theorem \ref{theorem:geod} and let $\mathbf{L}$ be its real ambient group. Then $K$ equals the minimal field of definition of the projection $\pi:\mathbf{H}\rightarrow \mathbf{PO}_{m,1}^{\circ}$, the group $\mathbf{L}^{\circ}$ is naturally identified with $\mathbf{PO}_{m,1}^{\circ}$, and the group $\mathrm{Res}_{K/k}\,\mathbf{L}^{\circ}$ is $k$-isomorphic to a $k$-simple factor of the semisimple group $\mathbf{PH}^{\circ}$. This is obvious if $m \neq 3$ because $\mathbf{PSO}_{m,1}$ is absolutely simple. If $m=3$, let us denote by $L$ the invariant trace field of $N$ and by $A$ the invariant quaternion algebra. Since $N$ is type-I or II, we have the sequence of field inclusions $k\subset K \subset L$, where $K=L\cap \mathbb{R}$ is an index two subfield of $L$ (see Section~\ref{sec:3-dim-types}). Remark~\ref{prop:invariant-trace-field-inclusion} states that $\mathrm{Res}_{L/k}\,\mathbf{PGL}_A$ is $k$-isomorphic to a $k$-simple factor of $\mathbf{PH}^{\circ}$. By Corollary \ref{cor:subfield} we have that $\mathrm{Res}_{L/K}\mathbf{PGL}_A$ is $K$-isomorphic to $\mathbf{L}^{\circ}$, and from this we obtain a $k$-isomorphism between $\mathrm{Res}_{K/k}\,\mathbf{L}^{\circ}$ and $\mathrm{Res}_{L/k}\,\mathbf{PGL}_A$.

\end{remark}

\begin{remark}\label{rem:GPS-smaller-trace-field}
The statement of Theorem \ref{theorem:geod} about the inclusion of adjoint trace fields cannot hold for totally geodesic immersions of pseudo-arithmetic hyperbolic orbifolds. Rather, there are examples where the opposite inclusion holds. As the work of Emery and Mila \cite{EM20} shows, if $M$ is a Gromov--Piatetski-Shapiro non-arithmetic manifold, then its adjoint trace field $k$ is a multiquadratic extension of the adjoint trace field $K$ of its building blocks. Thus any connected component of the gluing locus of the blocks is a totally geodesic submanifold of $M$ whose adjoint trace field $K$ is a proper subfield of the adjoint trace field $k$ of $M$. By combining this fact with Theorem \ref{theorem:geod}, we obtain a simple proof of non-quasi-arithmeticity of the Gromov--Piatetski-Shapiro manifolds.
\end{remark}

Theorem \ref{theorem:geod} naturally suggests that the case where $k=K$, i.e.\ the adjoint trace field of the totally geodesic sublattice coincides with the adjoint trace field of the ambient lattice, is special. Motivated by this, we give the following definition:

\begin{definition}\label{def:subform-space}
Let $N=\mathbb{H}^m/\Lambda$, $m\geq 2$, be a totally geodesic subspace of a quasi-arithmetic orbifold $M=\mathbb{H}^n/\Gamma$. We say that $N$ is a {\itshape subform subspace} if the adjoint trace field $K$ of $\Lambda$ coincides with the adjoint trace field $k$ of $\Gamma$. In this setting, we say that $\Lambda$ is a {\itshape subform sublattice} of $\Gamma$.
\end{definition}

\begin{remark}
Our definition of a subform subspace naturally extends the definition given by Meyer \cite[Construction 4.11]{Mey17} for subspaces of type-I orbifolds to subspaces of arithmetic orbifolds of any type.
\end{remark}
We have already encountered many examples of subform subspaces in arithmetic hyperbolic orbifolds: indeed all the involutions in $\mathrm{PO}_f(k)$ described in Section~\ref{sec:k-involutions-type1} and the involutions in $\mathrm{PU}_F(D)$ described in Section~\ref{sec:k-involutions-type2} give rise to subform sublattices: the fixed point set $\mathbf{U}$ for the action of the involution on $\mathbb{R}^{n+1}$ corresponds to a $k$-subspace (resp.\ a $D$-submodule) and the restriction of the quadratic form $f$ (resp.\ the skew-Hermitian form $F$) to $\mathbf{U}$ will be admissible and defined over $k$ (resp.\ defined on $D$, where $D$ is a quaternion algebra over $k$).

We begin characterising subform subspaces by proving the following proposition:
\begin{prop}\label{prop:subform-space-is-fc}
Let $N=\mathbb{H}^m/\Lambda$ be a subform subspace of an arithmetic orbifold $M=\mathbb{H}^n/\Gamma$. Then $N$ is an fc-subspace associated to a single involution in the commensurator of $\Gamma$.\end{prop}

\begin{proof}
Denote by $\G$ and $\mathbf{L}$ the ambient groups of $M$ and $N$ respectively, which are both defined over the same adjoint trace field $k$. Notice that $\G$ (resp.\ $\mathbf{L}$) is a $k$-form of $\mathbf{PO}_{n,1}$ (resp.\ $\mathbf{PO}_{m,1}$). 
Here we opt to work with forms of the real group $\mathbf{O}_{n,1}(\mathbb{R})$. Up to $k$-isomorphism there is a unique algebraic $k$-group $\widetilde{\mathbf{G}}$ in the $k$-isogeny class of ${\mathbf{G}}$ whose real points are isomorphic to $\mathbf{O}_{n,1}(\mathbb{R})$ \cite[\S 2.6]{Tits66}. The group $\widetilde{\G}$ is obtained as a central extension of $\G$ by $\mathbb{Z}/2\mathbb{Z}$, and the nontrivial element of the center corresponds to $\mathrm{-id}\in \mathrm{O}_{n,1}$. If $\mathbf{G}$ is of absolute type $B_n$, then $\widetilde{\mathbf{G}}=\mathbf{G} \times \mathbb{Z}/2\mathbb{Z}$ is simply a direct product. On the other hand, if $\mathbf{G}$ is of absolute type $D_n$, the extension is induced by a non-trivial $k$-isogeny of $\widetilde{\mathbf{G}}^{\circ}$ onto $\mathbf{G}^{\circ}$. The same conclusions hold for the $k$-from $\widetilde{\mathbf{L}}$ of $\mathbf{O}_{m,1}$.

We repeat the same argument as in the proof of Theorem \ref{theorem:geod} using the forms of $\mathbf{O}_{n,1}$.  Suppose that the totally geodesic suborbifold $N$ is the projection of a totally geodesic subspace $U\subset \mathbb{H}^m$, and denote by $\mathcal{U}$ the vector subspace of $\mathbb{R}^{n+1}$ such that $\mathcal{U} \cap \mathbb{H}^n=U$. We define $\widetilde{\mathbf{H}}$ as the preimage in $\widetilde{\mathbf{G}}$ of the rational stabilizer $\mathbf{H}<\mathbf{G}$ and notice that its real points form a subgroup of $\mathrm{Stab}_{\widetilde{\mathbf{G}}(\mathbb{R})}(\mathcal{U})= \mathrm{O}_{m,1} \times \mathrm{O}_{n-m}$.

By Remark \ref{lemma:atf-and-projection-field-coincide} and the fact that $K=k$, the group $\mathbf{L}^\circ$ is $k$-isomorphic to a $k$-simple factor of the semisimple rational stabilizer $\mathbf{PH}^{\circ}$. Hence the projection $\pi:\mathbf{H}\rightarrow \mathbf{L}$ lifts to a surjective morphism $\widetilde{\pi}:\widetilde{\mathbf{H}}\rightarrow \widetilde{\mathbf{L}}$.
In particular, we obtain that $\widetilde{\mathbf{H}}$ decomposes over $k$ as a product 
$\widetilde{\mathbf{H}} = \widetilde{\mathbf{L}} \times \mathbf{C}$ where $\mathbf{C}$ is the kernel of $\widetilde{\pi}$. Moreover, we notice that the image of $(-\mathrm{id},-\mathrm{id}) \in \widetilde{\mathbf{H}}(k)$ under $\widetilde{\pi}$ is $-\mathrm{id} \in \widetilde{\mathbf{L}}(k)$.

We conclude by noticing that the element $\theta=(\mathrm{id},-\mathrm{id}) \in \mathrm{O}_{m,1} \times \mathrm{O}_{n-m}$ belongs to $\widetilde{\mathbf{H}}(k) < \widetilde{\mathbf{G}}(k)$, since it is the product of $(-\mathrm{id},-\mathrm{id})\in \widetilde{\mathbf{H}}(k)$ with $(-\mathrm{id},\mathrm{id})\in \widetilde{\mathbf{L}}(k)< \widetilde{\mathbf{H}}(k)$. Its projection to $\mathbf{G}$ lies in $\mathbf{G}(k)$, and therefore corresponds to an order-$2$ element in the commensurator of the lattice $\Gamma$. Moreover, $\theta$ corresponds to the identity element in $\mathbf{L}(\mathbb{R})=\mathbf{PO}_{m,1}(\mathbb{R})$, meaning that it acts on the subspace $U$ by fixing it pointwise.
\end{proof}

We now combine Proposition \ref{prop:subform-space-is-fc} with the characterization of involutions in the ambient groups of type I and II arithmetic lattices given in Sections \ref{sec:k-involutions-type1} and \ref{sec:k-involutions-type2} to prove the following:

\begin{prop}
Let $M$ be a type-I (resp. type-II) arithmetic hyperbolic orbifold, and $N \subset M$ a subform subspace in $M$ of dimension $\geq 2$. Then $N$ is a type-I (resp. type-II) arithmetic hyperbolic orbifold.
\end{prop}

\begin{proof}
We first deal with the case where $M$ is a type-I arithmetic orbifold. Let $k$ be its adjoint trace field and $\mathbf{G}=\mathbf{PO}_f$ its ambient group, where $f$ is an admissible form defined over $k$. By Proposition \ref{prop:subform-space-is-fc}, the subspace $N$ is the projection of the fixed point set $U=\mathcal{U} \cap \mathbb{H}^n$ of an involution $\theta \in \mathbf{PO}_f (k)$.

By the discussion in Section \ref{sec:k-involutions-type1}, there are two possibilities:
\begin{enumerate}
    \item the involution $\theta$ belongs to the image $\mathrm{PO}_f(k)$ of $\mathrm{O}_f(k)$ under the projection map $\mathbf{O}_f\rightarrow \mathbf{PO}_f$;
    \item the involution $\theta$ belongs to $\mathbf{PO}_f (k) \setminus \mathrm{PO}_f(k)$.
\end{enumerate}
Case (2) is not possible: by the discussion in Section \ref{sec:k-involutions-type1}, the adjoint trace field of $N$ would be a totally real quadratic extension of $k$ of the form $K=k(\sqrt{\mu})$, and thus $N$ would not be a subform subspace.

Therefore, $\theta$ belongs to $\mathrm{PO}_f(k)$, the fixed point set $\mathcal{U}$ of $\theta$ is a $k$-subspace, and the adjoint trace field of $N$ is precisely $k$. The ambient group of the fc-subspace associated to $N$ is $\mathbf{PO}_g$, where $g$ is the ($k$-defined and admissible) restriction of $f$ to $\mathcal{U}$.

The proof works similarly in the case where $M$ is a type-II orbifold with adjoint trace field $k$ and ambient group of the form $\mathbf{PU}_F$. The subspace $N$ is the fixed point set of an involution $\theta \in \mathbf{PU}_F (k)$. By the discussion in Section~\ref{sec:k-involutions-type2},  the involution $\theta$ cannot belong to $\mathbf{PU}_F (k) \setminus \mathrm{PU}_F(D)$, or the adjoint trace field of $N$ would again be a totally real quadratic extension $K=k(\sqrt{\mu})$ of $k$. Thus $\theta$ belongs to $\mathrm{PU}_F(D)$, its fixed-point-set is a right $D$-submodule $D_N$ of $D^m$, the adjoint trace field of $N$ is simply $k$ and its ambient group is $\mathbf{PU}_G$, where $G$ is the (admissible) restriction of $F$ to $D_N$.
\end{proof}

We finally have the tools to prove Theorem \ref{teo:embeddings-hyperbolic-orbifolds}, which is meant to be an algebraic characterisation of totally geodesic immersions of arithmetic hyperbolic orbifolds. Here we make an essential use of Theorem~\ref{theorem:cent} on the finiteness of volume of fc-subspaces.

\subsection{Proof of Theorem~\ref{teo:embeddings-hyperbolic-orbifolds}}
The last part of the statement is obvious: if there were a proper subform subspace $S'$ of $S$ which contains $N$, then the adjoint trace field of $S'$ would be the same as that of $M$ and thus $S'$ would be a subform subspace of $M$, contradicting the minimality of $S$.

The subform subspace $S$ can be constructed explicitly. In what follows, we carry over definitions and notations from the proof of Theorem \ref{theorem:geod}. In particular $k$ (resp.\ $K$) denotes the adjoint trace field of $M=\mathbb{H}^n/\Gamma$ (resp.\ $N=\mathbb{H}^m/\Lambda$), $\G$ (resp.\ $\mathbf{L}$) denotes its ambient group and $\mathbf{H}<\mathbf{G}$ denotes the rational stabiliser of $N$ in $M$. 

Let us substitute as in the proof of Proposition \ref{prop:subform-space-is-fc} the groups $\mathbf{L}$ and $\mathbf{G}$ with the corresponding forms $\widetilde{\mathbf{L}}$ and $\widetilde{\mathbf{G}}$ of $\mathbf{O}_{m,1}$ and $\mathbf{O}_{n,1}$ respectively, and take $\widetilde{\mathbf{H}}$ to be the preimage of $\mathbf{H}$ in $\widetilde{\mathbf{G}}$. Notice how $\widetilde{\mathbf{G}}(\mathbb{R})\cong \mathrm{O}_{n,1}$ admits a natural linear representation on the vector space $\mathbb{R}^{n+1}$ as the orthogonal group of the standard form of signature $(n,1)$, and this restricts to a representation of the group $\widetilde{\mathbf{H}}(\mathbb{R})$. Since this latter group is reductive, this representation decomposes as a direct sum of irreducible representations.

By Remark \ref{lemma:atf-and-projection-field-coincide} the group $\mathrm{Res}_{K/k}\,\mathbf{L}^\circ$ is $k$-isomorphic to a $k$-simple factor of the semisimple rational stabilizer $\mathbf{PH}^{\circ}$. By the same argument used in the proof of Proposition \ref{prop:subform-space-is-fc} we see that the group $\mathrm{Res}_{K/k}\,\widetilde{\mathbf{L}}$ is a $k$-simple factor of $\widetilde{\mathbf{H}}$, i.e. we have the following $k$-defined decompositions:
\begin{align}\label{eq:Bergeron-Clozel-contains-res-of-scalars-2}
\widetilde{\mathbf{H}}\cong \mathbf{C} \times \mathrm{Res}_{K/k}\,\widetilde{\mathbf{L}},\\
\mathbf{H} \cong \frac{\mathbf{C} \times \mathrm{Res}_{K/k}\,\widetilde{\mathbf{L}}}{\langle(-\mathrm{id},-\mathrm{id})\rangle}\end{align} 
where $\mathbf{C}$ is the (possibly trivial) kernel of the natural projection map \linebreak
\mbox{$\widetilde{\pi}:\widetilde{\mathbf{H}}\rightarrow \mathrm{Res}_{K/k}\,\widetilde{\mathbf{L}}$}. These decompositions completely characterise the arithmetic structure of the geodesic immersion of $N$ into $M$ and determine the irreducible factors of the representation of $\widetilde{\mathbf{H}}(\mathbb{R})$ on $\mathbb{R}^{n+1}$.

Indeed we see that
$\mathrm{Res}_{K/k}\,\widetilde{\mathbf{L}}(\mathbb{R})^{\circ} \cong \mathrm{SO}_{m,1} \times \prod_{i=1}^{d-1} \mathrm{SO}_{m+1}$, where $d$ is the degree of $K/k$. Each factor corresponds to a field embedding $\sigma\in S^{\infty}_{K/k}$ and acts irreducibly on an $(m+1)$-dimensional subspace $\mathcal{U}^{\sigma}$ of $\mathbb{R}^{n+1}$ (equipped with the standard Minkowski form of signature $(n,1)$) while acting trivially on its orthogonal complement.

Each compact factor acts on a positive definite subspace $\mathcal{U}^{\sigma}$, while the single non-compact factor acts precisely on the lift $\mathcal{U}^{\sigma_0}=\mathcal{U}$ of the subspace $N$ to $\mathbb{R}^{n+1}$.  The compact group $\mathbf{C}(\mathbb{R})$ acts on some (possibly trivial) positive definite subspace of dimension $h\geq 0$ and trivially on its orthogonal complement, and we get that $\mathbb{R}^{n+1}$ decomposes as an orthogonal direct sum of all these subspaces, i.e. $n+1=d\cdot(m+1)+h$.

If the group $\mathbf{C}$ is trivial, we simply set $S=M$. If $\mathbf{C}$ is non-trivial, we notice that $i=[(-\mathrm{id},\mathrm{id})]$ is an involution in $\mathbf{H}(k)<\G(k)$, and thus belongs to the commensurator of $\Gamma$. We take $S$ to be the fc-subspace associated to this involution. Notice that $S$ has finite volume due to Theorem \ref{theorem:cent}.

In the first case, $S=M$ is trivially a subform subspace of $M$. In the second case, we can repeat the construction and define the corresponding groups for the immersion $S \subset M$. More specifically, we denote by 
$$
\mathcal{U}_S=\mathcal{U} \oplus \mathcal{U}^{\sigma_1} \oplus \dots \oplus \mathcal{U}^{\sigma_{d-1}}
$$
the subspace that is pointwise fixed by the involution $(-\mathrm{id},\mathrm{id}) \in \widetilde{\mathbf{H}}(k)$, by $\mathbf{H}_S\subset \G$ the rational stabiliser $\mathbf{H}_{S}$ of $S$ and by $\mathbf{L}_S$ the ambient group of $S$.

We obtain that $\mathbf{C}$ is contained in the kernel $\mathbf{C}_S$ of the projection map $$\widetilde{\pi}_{S}:\widetilde{\mathbf{H}}_{S}\rightarrow  \widetilde{\mathbf{L}}_S.$$
Indeed, $\mathbf{C}(\mathbb{R})$ acts trivially on the fixed point set $\mathcal{U}_S$ of the involution $i$, which is the direct sum of all subspaces of $\mathbb{R}^{n+1}$ associated to the factors of $\mathrm{Res}_{K/k}\,\widetilde{\mathbf{L}}(\mathbb{R})$. Since each of these  $d$ subspaces has dimension $m+1$, we obtain that $\mathrm{dim}(\mathcal{U}_S)=d(m+1)$ and thus $\mathrm{dim}(S)=d(m+1)-1$.

Moreover, it is not difficult to see that $\mathbf{C}=\mathbf{C}_S$: any element $g\in\mathbf{C}_S(\mathbb{R})$ fixes $\mathcal{U}_S$ pointwise and therefore has to fix pointwise the lift $\mathcal{U}\subset \mathcal{U}_S$ of $N$ as well. Therefore, the element $g$ belongs to the group $\widetilde{\mathbf{H}}(\mathbb{R})$. Since $g$ acts trivially on $\mathcal{U}_S=\mathcal{U} \oplus \mathcal{U}^{\sigma_1} \oplus \dots \oplus \mathcal{U}^{\sigma_{d-1}}$, it projects to the identity element in $\mathrm{Res}_{K/k}\,\widetilde{\mathbf{L}}(\mathbb{R})$ and thus it belongs to $\mathbf{C}(\mathbb{R})$.

We therefore obtain that $\mathbf{C}_S$ is a $k$-subgroup of $\widetilde{\mathbf{H}}_S$, meaning that the decomposition

$$\widetilde{\mathbf{H}}_S \cong \mathbf{C}_S \times \widetilde{\mathbf{L}}_S $$ is defined over $k$. By applying Remark \ref{lemma:atf-and-projection-field-coincide} we obtain that the adjoint trace field of $S$ is equal to $k$ and therefore $S$ is a subform subspace.

The minimality of $S$ as a subform subspace containing $N$ can be proven as follows: by Remark \ref{lemma:atf-and-projection-field-coincide}  the real points of the group $\widetilde{\mathbf{L}}_{S'}$ associated to a subform subspace $S'$ containing $N$ must contain the group $\mathrm{Res}_{K/k}\,\widetilde{\mathbf{L}}(\mathbb{R})^{\circ} \cong \mathrm{SO}_{m,1} \times \prod_{i=1}^{d-1} \mathrm{SO}_{m+1}$ as a $k$-subgroup, and thus a lift $\mathcal{U}_{S'}$ of $S'$ has to contain $\mathcal{U} \oplus \mathcal{U}^{\sigma_1} \oplus \dots \oplus \mathcal{U}^{\sigma_{d-1}}=\mathcal{U}_S$. Consequently, we see that $S\subseteq S'$. \qed

\begin{remark}\label{rem:geometric-characterization-subform}
The property of being a subform subspace can be characterized geometrically. If $N=\mathbb{H}^m/\Lambda$ is a geodesic submanifold in $M=\mathbb{H}^m/\Gamma$ and $U$ denotes a lift of the universal cover of $N$ to $\mathbb{H}^n$, then the stabilizer $\Sigma=\mathrm{Stab}_{\Gamma}(U)$ of $U$ in $\Gamma$ is naturally a lattice inside the stabilizer $$\mathrm{Stab}_G(U)\cong\frac{\mathrm{O}_{m,1}\times \mathrm{O}_{n-m}}{\langle (-\mathrm{id},-\mathrm{id})\rangle}$$ of $U$ in $G=\mathrm{Isom}(\mathbb{H}^n)$. Denoting by $\widetilde{\Sigma}$ the preimage of $\mathrm{Stab}_{\Gamma}(U)$ in $\mathrm{O}_{m,1}\times \mathrm{O}_{n-m}$, we see that $N$ is a subform subspace of $M$ if and only if $\widetilde{\Sigma}$ is a reducible lattice in $\mathrm{O}_{m,1}\times \mathrm{O}_{n-m}$, in the sense that it is commensurable with a product $\Lambda_1 \times \Lambda_2$ where $\Lambda_1<\mathrm{O}_{n,1}$ and $\Lambda_2<\mathrm{O}_{n-m}$ are lattices, and $\Lambda_1 \sim \Lambda$.

Indeed, if $N$ is a subform subspace in $M$, then the decomposition in equation \eqref{eq:Bergeron-Clozel-contains-res-of-scalars-2} takes the form $\widetilde{\mathbf{H}}\cong \mathbf{C} \times \widetilde{\mathbf{L}}$ with $\widetilde{\mathbf{L}}(\mathbb{R})= \mathrm{O}_{m,1}$ and $\mathbf{C}(\mathbb{R})<\mathrm{O}_{n-m}$ and we may take $\Lambda_1=\widetilde{\Sigma} \cap \widetilde{\mathbf{L}}$ and $\Lambda_2=\widetilde{\Sigma} \cap \mathbf{C}$.

On the other hand, if the trace field inclusion $k\subset K$ is proper then the projection of $\widetilde{\Sigma}$ to the $\mathrm{O}_{n-m}$-factor is not finite, but rather a dense subgroup of the group $\prod_{i=1}^{d-1} \mathrm{SO}_{m+1}<\mathrm{O}_{n-m}$ corresponding to the product of the compact factors of $\mathrm{Res}_{K/k}\,\widetilde{\mathbf{L}}(\mathbb{R})$. Moreover, in this case the intersection $\widetilde{\Sigma} \cap \widetilde{\mathbf{L}}$ is trivial.

\end{remark}

\begin{remark}\label{rem:1-dimensional-not-in-3-dim-type-III}
We can define a notion of adjoint trace field also for a one-dimensional totally geodesic subspace (i.e.\ a closed, immersed geodesic)  of a hyperbolic manifold $M$. As a word of warning, the orientation-preserving isometry group of $\mathbb{H}^1 \cong \mathbb{R}$ is isomorphic to $\mathbf{SO}_{1,1}(\mathbb{R})^{\circ}=\mathrm{SO}_{1,1}^+\cong \mathbb{R}$, and $\mathbf{SO}_{1,1}\cong \mathbb{G}_m$ is a one-dimensional $\mathbb{R}$-split torus (thus an abelian group). The adjoint action is therefore trivial, so one has to define the adjoint trace field differently. However, we may embed $\mathbf{SO}_{1,1}$ as a maximal $\mathbb{R}$-split torus inside the simple $\mathbb{R}$-defined group $\mathbf{SO}_{2,1}\cong \mathbf{PGL}_2$, and compute the adjoint traces of an element of $\mathbf{SO}_{1,1}(\mathbb{R})$ via its action on the Lie algebra $\mathfrak{sl}_2$ of $\mathbf{PGL}_2$. An easy computation shows that for a hyperbolic translation $\gamma$ of length $x$, this adjoint trace is equal to $$\mathrm{tr}(\gamma)+1=2\cosh(x)+1=e^x + e^{-x}+1,$$ where $\mathrm{tr}(\gamma)$ is the trace of $\gamma$ as an element of $\mathrm{SO}_{1,1}^+$. The adjoint trace field of the one-dimensional lattice $\langle \gamma \rangle \cong \mathbb{Z}<\mathrm{SO}_{1,1}^+$ is then defined as $\mathbb{Q}(\mathrm{tr}(\gamma))$ and it is a commensurability invariant. 

It is worth noting that if $\gamma$ is realized as a purely hyperbolic element inside an arithmetic lattice $\Gamma<\mathrm{PSL}_2(\mathbb{F})$ where $\mathbb{F}=\mathbb{R} \text{ or }\mathbb{C}$, then by \cite[Remark 7]{Vin95} the adjoint trace field of $\gamma$ equals the adjoint trace field $k$ of $\Gamma$. Moreover, we have that $\Gamma$ is necessarily of type I if $\mathbb{F}=\mathbb{R}$ and of type I or II if $\mathbb{F}=\mathbb{C}$ and thus $k$ is necessarily totally real. The group $\langle \gamma\rangle$ is then contained in the group $\mathbf{G}(k)$ of $k$-points of the real ambient group $\mathbf{G}$ of $\Gamma$, and its Zariski closure is a maximal $k$-torus $\mathbf{L}<\mathbf{G}$ that splits over $\mathbb{R}$ and which takes the role of the ambient group of $\langle \gamma \rangle$. Besides, since $\mathbf{G}$ is admissible we have that $\mathbf{L}^{\sigma}(\mathbb{R})\cong \mathrm{SO}_2/\langle \pm \mathrm{id} \rangle$ is compact for every non-trivial embedding $\sigma:k\rightarrow \mathbb{R}$, meaning that $\mathbf{L}$ is an admissible $k$-form of $\mathrm{SO}_{1,1}^+$ and $\langle\gamma\rangle$ is commensurable with the group of integer points $\mathbf{L}(\mathcal{O}_k)$. In essence, we may regard $\langle \gamma \rangle$ as a one-dimensional arithmetic hyperbolic lattice.

With this in mind, the statement about the inclusion of the adjoint trace fields in Theorem~\ref{theorem:geod} also holds when the totally geodesic subspace $N$ is one-dimensional, provided that $M$ is not a $3$-dimensional type-III orbifold.

Indeed, to prove the inclusion $k\subseteq K$ we only use the surjectivity of the projection map $\pi:\mathbf{H}(\mathbb{R})^{\circ} \rightarrow \mathbf{L}(\mathbb{R})^{\circ}$ and that the ambient group $\mathbf{G}$ of $M$ is admissible (which implies that its $k$-defined subgroup $\mathbf{H}$ is admissible).
The first condition is always guaranteed by the fact that lattices in $\mathrm{SO}_{1,1}^+\cong \mathbb{R}$ (which is not semi-simple) are Zariski dense. The second condition follows from the fact that $M$ is not $3$-dimensional type III.

Consequently, the statement of Theorem~\ref{teo:embeddings-hyperbolic-orbifolds} always applies when $N$ is one-dimensional and $M$ is not a $3$-dimensional type-III orbifold. In particular, we can speak of subform subspaces of dimension one in type-I, II and $7$-dimensional type-III lattices, and Proposition~\ref{prop:subform-space-is-fc} applies in this case too.
\end{remark}

The following two remarks concern the possible obstructions to extending Theorems \ref{teo:embeddings-hyperbolic-orbifolds} and \ref{theorem:geod} to the settings of quasi- or pseudo-arithmetic lattices, respectively.

\begin{remark}
We are currently unable to prove Theorem \ref{teo:embeddings-hyperbolic-orbifolds} under the assumption that the lattice $\Gamma$ is properly quasi-arithmetic. We do obtain the decomposition given in \eqref{eq:Bergeron-Clozel-contains-res-of-scalars-2} for the $k$-group $\mathbf{H}$. However, we cannot directly apply Theorem~\ref{theorem:cent} to conclude that the minimal subform subspace $S$ has finite volume. The main obstruction here is that the involution $i=[(-\mathrm{id},\mathrm{id})]$ might not belong to the commensurator of $\Gamma$, which has infinite index in $\mathbf{G}(k)$. Thus, $S$ might simply be the (infinite-volume) quotient of $\mathcal{U}_S=\mathcal{U} \oplus \mathcal{U}^{\sigma_1} \oplus \dots \oplus \mathcal{U}^{\sigma_{d-1}}$ under the discrete group $\mathrm{Stab}_{\Gamma}(\mathcal{U}_S)$.
\end{remark}

\begin{remark}
We cannot prove that if $M=\mathbb{H}^n/\Gamma$ is pseudo-arithmetic and $N\subset M$ is a totally geodesic subspace, then $N$ has to be pseudo-arithmetic as well. The main problem here is proving that the group $\mathbf{H}$, defined as the Zariski closure of  $\mathrm{Stab}({\mathcal{U}})\cap \mathrm{Comm}(\Gamma)$, is defined over the field of definition $k$ of the ambient group $\mathbf{G}$ of $M$. Notice that the adjoint trace field $k'$ of $M$ is now a multiquadratic extension of $k$ and $\Gamma<\mathbf{G}(k')$. It is indeed true that the Zariski closure of $\mathrm{Comm}(\Gamma)$ is $k$-defined, since it is equal to $\mathbf{G}$ by Borel's density theorem. It could however be possible that the Zariski closure of $\mathrm{Stab}(\mathcal{U})$ is only defined over $k'$.

We do remark however that by the angle rigidity theorem \cite[Theorem 4.1]{FLMS18} and \cite{FLMS-erratum} it follows that if $M$ is an $n$-dimensional Gromov--Piatetski-Shapiro manifold then all of its totally geodesic submanifolds of dimension $> (n-1)/2$ are pseudo-arithmetic. In this setting, $\mathbf{G}=\mathbf{O}_f(k')$, where $f$ is admissible over $k$ and  $\mathcal{U}$ is a $k$-subspace. It follows that both the Zariski closure of $\mathrm{Stab}_{\Gamma}\mathcal{U}$ and the group $\mathbf{H}$ are $k$-defined. Explicit examples of totally geodesic embeddings of Gromov--Piatetski-Shapiro manifolds are described in \cite{KRiS}.
\end{remark}

By Theorem \ref{teo:embeddings-hyperbolic-orbifolds}, we see that $N$ is a subform subspace of $M$ precisely when $N=S$. At this point it becomes natural to consider the other possible extremal case, when $S=M$.

\begin{definition}
Let $N = \mathbb{H}^m/\Lambda$ be a totally geodesic subspace of an arithmetic orbifold $M=\mathbb{H}^n/\Gamma$. Suppose that $N$ is not $3$-dimensional type III. We say that $N$ is a {\itshape Weil restriction subspace} of $M$ if the minimal subform subspace $S$ of $M$ which contains $N$ is precisely $M$. In this setting, we say that $\Lambda$ is a {\itshape Weil restriction}  sublattice of $\Gamma$.
\end{definition}

This definition also applies to one-dimensional subspaces of type-I, -II and $7$-dimensional type-III lattices by Remark \ref{rem:1-dimensional-not-in-3-dim-type-III}.
Notice how all the totally geodesic immersions built using Propositions \ref{prop:res_of_scalars_typeI} and \ref{prop:Weil-restriction-type-II} give rise to Weil restriction sublattices. Moreover, we have an explicit description of the ambient group of the lattice into which we construct the embedding: these are given by groups of the form $\mathbf{O}_{\mathrm{Res}_{K/k}(f)}$ (in the type-I case) or $\mathbf{U}_{\mathrm{Res}_{K/k}(F)}$ (in the type-II case). Theorems \ref{theorem:geod} and \ref{teo:embeddings-hyperbolic-orbifolds} simply state that the two techniques introduced in Section \ref{sec:kinds-of-subspaces} are all that is needed to construct all totally geodesic immersions of arithmetic hyperbolic lattices, except when one of $M$ or $N$ is $3$-dimensional type III. More precisely, if $M=\mathbb{H}^n/\Gamma$  is an arithmetic hyperbolic orbifold not belonging to the exceptional family in dimension $n=3$ and $N=\mathbb{H}^m/\Lambda$ is a totally geodesic subspace of $M$, then:

\begin{enumerate}
    \item $N$ is arithmetic, its adjoint trace field $K$ is contains the adjoint trace field $k$ of $M$ and $[K:k] = d \geq 1$;
    \item If $N$ is not $3$-dimensional type-III the field $K$ is totally real, the real ambient group $\mathbf{L}$ of $N$ is an admissible $K$-form of $\mathbf{PO}_{m,1}$, and the group $\mathrm{Res}_{K/k}(\mathbf{L})$ is isogenous to a closed subgroup of an admissible $k$-form $\mathbf{L}_S$ of $\mathbf{PO}_{d(m+1)-1,1}$, which corresponds to an arithmetic hyperbolic orbifold $S$ of dimension $d(m+1)-1$  such that $N \subseteq S$;
    \item $S$ is realised as a subform subspace in $M$, and $N$ is a Weil restriction subspace in $S$. In particular $\mathrm{dim}(S)=d(m+1)-1\leq n$.
\end{enumerate}
    
We also see that all totally geodesic subspaces of low enough codimension in $M$ are subform subspaces:

\begin{corollary}\label{cor:max-dim-subform-space}
Let $M=\mathbb{H}^n/\Gamma$ be an arithmetic hyperbolic orbifold with $M$ either type I, II or $7$-dimensional type III. Then all totally geodesic subspaces of dimension $m > (n-1)/2$ that are not $3$-dimensional type III are subform subspaces. 
\end{corollary}
\begin{proof}
The dimension $m$ of a subspace of $M$ which is not a subform subspace is subject to the constraint $d(m+1)-1 \leq n$, where $d=[K:k]>1$. The maximum possible dimension is achieved when $d=2$, so we get that $2(m+1)-1 \leq n$. This implies $m \le (n+1)/2 -1 = (n-1)/2$.
\end{proof}
\medskip

\subsection{Three-dimensional arithmetic submanifolds of type III}
Finally, we can discuss the obstruction to extending Theorem \ref{teo:embeddings-hyperbolic-orbifolds} to the case where the totally geodesic subspace $N$ is $3$-dimensional type-III. We first notice that the adjoint trace field of a $3$-dimensional type-III lattice can never coincide with the one of an arithmetic hyperbolic lattice of dimension $\geq 4$, since the former is not totally real while the latter is. As such, it is impossible for a $3$-dimensional type-III orbifold to be geodesically immersed as a subform space in a higher dimensional manifold in the sense of Definition~\ref{def:subform-space}. Moreover, there is in general no reason to expect an inclusion of the adjoint trace field of the ambient manifold $M$ in the adjoint trace field of $N$ such as the one described in the setting of Theorem \ref{theorem:geod}.

However, we stress that when working with a $3$-dimensional type-III lattice $\Lambda$, the right field to look at is the invariant trace field $L$, and the right group to look at is the complex ambient group. Indeed, the arithmetic structure of $\Lambda$ (namely, it being commensurable with the group of integer points of an admissible form of $\mathbf{PSO}_{3,1}$ defined over some number field) is only evident when one considers the complex ambient group $\mathbf{PGL}_A$, with $A$ the invariant quaternion algebra of $\Lambda$. If one instead considers the adjoint trace field $K$ and the real ambient group $\mathbf{L}$ of $\Lambda$, the latter is not admissible and $\Lambda<\mathbf{L}(K)$ can never be commensurable with its group of integer points, as this would give a higher-rank irreducible lattice and therefore violate Margulis' Superrigidity Theorem.

Assuming that a $3$-dimensional type-III orbifold $N=\mathbb{H}^3/\Lambda$ is geodesically immersed in an arithmetic orbifold $M=\mathbb{H}^n/\Gamma$, by Remark \ref{prop:invariant-trace-field-inclusion} we have that the adjoint trace field $k$ of $\Gamma$ is a subfield of the invariant trace field $L$ of $\Lambda$. Moreover, denoting by $\mathbf{H}$ the rational stabilizer of $N$ in $M$, we have that $L$ is the minimal field of definition of the natural projection map $\mathbf{H}\rightarrow \mathbf{PGL}_2$. Again by Remark \ref{prop:invariant-trace-field-inclusion}, we see that the group $\mathrm{Res}_{L/k}\, \mathbf{PGL}_A$ is a $k$-simple factor of the adjoint group $\mathbf{PH}^{\circ}$.  Unlike the case where $N$ is type I or II, there is no field inclusion of the form $k<K<L$, so it is not possible to obtain the real ambient group $\mathbf{L}$ of $N$ via restriction of scalars from $L$ to $K$ of $\mathbf{PGL}_A$.

Since $L$ has only one pair of conjugate complex embeddings and is not an imaginary quadratic extension of $k$, we see that there is at least one field embedding $\sigma \in S^{\infty}_{L/k}$ such that $\sigma(L) \subset \mathbb{R}$, and therefore the $\mathbb{R}$-simple factors of $\mathrm{Res}_{L/k}\, \mathbf{PGL}_A$ consist of a single factor isomorphic to $\mathbf{PSO}_{3,1}$ and a non-empty set of factors isomorphic to $\mathbf{PSO}_3$.

The contribution of every $\mathbf{PSO}_3$-factor to the total codimension of $N$ in $M$ depends on the factorization of the representation of $\mathrm{Res}_{L/k}\, \mathbf{GL}_A(\mathbb{R})< \mathbf{SO}_{n,1}(\mathbb{R})$ as a direct sum of irreducible representations. Recall that the real Lie group $\mathrm{Spin}_4$ is isomorphic to $\mathrm{Spin}_3 \times \mathrm{Spin}_3$ via left- and right- quaternion multiplication, and this isomorphism induces an isogeny between $\mathbf{SO}_4$ and $\mathbf{SO}_3 \times \mathbf{SO}_3$. The two resulting representations of $\mathrm{Spin}_3$ as groups of isometries of the $4$-dimensional Euclidean space are both irreducible. Therefore, each $\mathbf{PSO}_3$-factor corresponds to an irreducible action of the preimage $\widetilde{\mathbf{H}}$ of $\mathbf{H}$ on either a $3$-dimensional or a $4$-dimensional subspace of $\mathbb{R}^{n+1}$, and moreover a pair of distinct $\mathbf{PSO}_3$-factors may correspond to irreducible actions on the same $4$-dimensional subspace. For example, in the trialitarian case described in Section \ref{sec:type3}, there is a single $\mathbf{PSO}_3$-factor that yields an irreducible action of $\widetilde{\mathbf{H}}$ on a $4$-dimensional subspace (and the codimension of the totally geodesic $3$-dimensional subspace inside the $7$-dimensional trialiatrian orbifold is indeed equal to $4$).

In particular, the discussion above leads to the following:
\begin{remark}\label{embeddings-3-dim-type-III}
Let $N=\mathbb{H}^3/\Lambda$ be a $3$-dimensional type-III totally geodesic suborbifold of an arithmetic hyperbolic orbifold $M=\mathbb{H}^n/\Gamma$. Then $N$ has codimension $n-3$ at least $3$ in $M$, i.e.\ $n\geq 6$. Moreover, the stabilizer $\Sigma=\mathrm{Stab}_{\Gamma}(U)$ of a lift $U$ of the universal cover of $N$ is an irreducible lattice in the stabilizer 
$\mathrm{Stab}_{G}(U)\cong(\mathrm{O}_{3,1}\times \mathrm{O}_{n-3})/\langle (-\mathrm{id},-\mathrm{id})\rangle$
of $U$ in $G=\mathrm{Isom}(\mathbb{H}^n)$.
\end{remark}

Notice how the case where $N$ is $3$-dimensional of type I or II is different. By Remark \ref{lemma:atf-and-projection-field-coincide} we may consider the sequence of field inclusions $k\subset K \subset L=K(\sqrt{\alpha}) \subset \overline{L}$, where $[L:K]=2$ and $\overline{L}$ denotes the Galois closure of the extension $L/k$. Since $L$ has a single complex place, we may assume that the element $\alpha \in K$ is negative, but it becomes positive under any non-identity embedding of $K$ relative to $k$. 

We obtain an isomorphism 
$$\mathcal{G}=\mathrm{Gal}(\overline{L}/k)\cong (\mathbb{Z}/2\mathbb{Z})^d \rtimes \mathrm{Gal}(\overline{K}/k),$$ where $\overline{K}$ is the Galois closure of $K/k$ and $d=[K:k]$. The $d$ entries of the $(\mathbb{Z}/2\mathbb{Z})^d$ correspond to the field embeddings of $K$ relative to $k$ (with the first entry corresponding to the identity embedding), and the action of $\mathrm{Gal}(\overline{K}/k)$ on $(\mathbb{Z}/2\mathbb{Z})^d$ via permutations of the entries is induced by the action of $\mathrm{Gal}(\overline{K}/k)$ on these embeddings.
Denoting by $\sigma \in \mathrm{Gal}(\overline{L}/K)<\mathrm{Gal}(\overline{L}/k)$ the automorphism of $\overline{L}$ given by complex conjugation (which exchanges $\sqrt{\alpha}$ and $-\sqrt{\alpha}$), we see that it corresponds to $(-1,1,\dots,1) \in (\mathbb{Z}/2\mathbb{Z})^d$. The $\mathcal{G}$-conjugates of $\sigma$ act by permuting exactly one pair of $\mathbf{PSO}_3$-factors of $\mathrm{Res}_{L/k}\mathbf{PGL}_A$ (those corresponding to elements of the form $\pm\sqrt{\tau(\alpha)}$ for $\tau \in S^{\infty}_{K/k}$). It follows that the action of each factor of $$\mathrm{Res}_{K/k}\,\widetilde{\mathbf{L}}\cong \prod_{\tau \in S^{\infty}_{K/k}} \widetilde{\mathbf{L}}^{\tau}$$ is $\mathrm{Gal}(\overline{K}/k)$-conjugate to the irreducible action of $\widetilde{\mathbf{L}}$ on $\mathbb{R}^4$ via elements of $\mathrm{O}_{3,1}$. This implies that each factor of the form $\widetilde{\mathbf{L}}^{\tau}$ acts on a $4$-dimensional subspace of $\mathbb{R}^{n+1}$.

\section{Finite centraliser subspaces and arithmeticity}\label{sec:proof-of-main-theorems}

In this section, we prove our arithmeticity criterion in terms of finiteness/infiniteness of fc-subspaces, and provide examples of nonarithmetic hyperbolic orbifolds which contain non-fc-subspaces of codimension one. We also exhibit examples of non-fc subspaces in arithmetic orbifolds.

\subsection{Proof of Theorem~\ref{theorem-fc}.}\label{sec:fc-spaces} \textit{(i).} Let $\Gamma < \mathbf{PO}_{n,1}(\R)$ be an arithmetic group defined over a totally real field $k$. Then $\Gamma$ is either a type I, type II or type III lattice. According to the discussion in Sections~\ref{sec:k-involutions-type1}, \ref{sec:k-involutions-type2}, \ref{sec:k-involutions-dim3} and by  Proposition \ref{prop:inner-involutions-7-dim-type-III}, there exists a $k$-involution $\theta \in \mathrm{Comm}(\Gamma)$. In all cases, we can choose the involution $\theta$ in such a way that the fixed point set of its action on $\mathbb{H}^n$ has positive dimension. Conjugating by the elements of $\mathrm{Comm}(\Gamma)=\mathbf{G}(k)$, which is dense in $\G(\R)^{\circ}$ (cf. \cite[Proposition 5.1.8]{WM}), we obtain countably many different fc-subspaces.

\subsubsection*{(ii)} If $\Gamma$ is arithmetic and not $3$-dimensional type III, by combining Corollary \ref{cor:max-dim-subform-space} with Proposition \ref{prop:subform-space-is-fc} we prove that if $M=\mathbb{H}^n/\Gamma$  and $n$ is odd (resp.\ even), then all subspaces of dimension $m\geq (n+1)/2$ (resp.\ $m\geq n/2$) which are not $3$-dimensional type III are fc-subspaces. However, this bound can be improved when $n$ is odd. We now consider this case more carefully.

Let $N$ be a Weil restriction subspace in $M$ and suppose that $[K:k]=2$ where $K$ and $k$ denote the adjoint trace fields of $N$ and $M$ respectively. Notice that these conditions imply that $n=\mathrm{dim}(M)$ is odd and that $\mathrm{dim}(N)=m=(n+1)/2-1=(n-1)/2$. 

We claim that under the hypotheses above $N$ is necessarily an fc-subspace.
Let us denote by $U$ a lift of $N$ to $\mathbb{H}^n$ and by $\mathbf{H}$ the Zariski closure of $\mathrm{Stab}_{\mathbf{G}(k)}(U)$ in the ambient group $\mathbf{G}$ of $M$. From the proof of Theorem \ref{teo:embeddings-hyperbolic-orbifolds} we have that $\mathbf{H}$ is $k$-defined and that 
\begin{equation}\label{eq:Weil-res-BC-group}
    \mathbf{H}\cong \frac{\mathrm{Res}_{K/k}\,\widetilde{\mathbf{L}}}{\langle-\mathrm{id}\rangle}\cong \frac{\widetilde{\mathbf{L}} \times \widetilde{\mathbf{L}}^{\sigma}}{\langle(-\mathrm{id},-\mathrm{id})\rangle} ,\end{equation} 
where $\widetilde{\mathbf{L}}$ denotes the $K$-form of $\mathbf{O}(m,1)$ isogenous to the ambient group $\mathbf{L}$ of $N$ and $\sigma:K\rightarrow K$  the nontrivial element of $\mathrm{Gal}(K/k)\cong \mathbb{Z}/2\mathbb{Z}$. The action of the Galois automorphism $\sigma$ maps a pair $(A,B) \in \widetilde{\mathbf{L}}_K \times \widetilde{\mathbf{L}}^{\sigma}_K$ to the pair $(B^{\sigma},A^{\sigma})$. Since $(-\mathrm{id},-\mathrm{id})$ is preserved by $\sigma$, this action descends to the group $\mathbf{H}$. Moreover, in $\mathbf{H}$ we have that $[(\mathrm{id},-\mathrm{id})]=[(-\mathrm{id},\mathrm{id})]$, therefore the element $[(\mathrm{id},-\mathrm{id})] \in \mathbf{H}(K)$ is fixed by the action of $\sigma$ and thus is an involution in $\mathbf{H}(k)\subset\mathbf{G}(k)=\mathrm{Comm}(\Gamma)$ with $U$ as its fixed point set. This implies that $N$ is an fc-subspace, and thus all subspaces of $M$ of dimension $m\geq (n-1)/2$, which are not $3$-dimensional type-III, are fc-subspaces.

In the case where $\Gamma$ is $3$-dimensional type III, we only need to prove that all $1$-dimensional totally geodesic subspaces are fc-subspaces. This follows by applying Proposition \ref{prop:Jorgensen-involutions}.

\subsubsection*{(iii)} By Margulis' theorem \cite[Chapter IX, Th. B]{Margulis-book}, if $\Gamma$ is non-arithmetic, then $\Gamma' = \mathrm{Comm}(\Gamma)$ is the maximal lattice containing $\Gamma$ with finite index. Since $\Gamma'$ is a lattice, it contains only finitely many conjugacy classes of finite subgroups (this fact is well known for word hyperbolic groups, and hence for cocompact hyperbolic lattices; for a general result we refer to \cite{Sam14}). Now assume that two finite subgroups $F_1$, $F_2 < \mathrm{Comm}(\Gamma) = \Gamma'$ are conjugate in $\Gamma'$, so that $F_1 = \gamma F_2\gamma^{-1}$, $\gamma \in \Gamma'$. Then their fixed point sets $H_i = \Fix(F_i)$ in $\HH^n$ satisfy $H_1 = \gamma H_2$.

Let $\Gamma_i = \mathrm{Stab}_\Gamma(H_i) = \{ \alpha\in\Gamma \mid \alpha H_i = H_i\}, \, i = 1, 2$, be the corresponding stabilisers. Then $$\alpha\in\Gamma_1 \Leftrightarrow \alpha H_1 = H_1\Leftrightarrow \alpha \gamma H_2 = \gamma H_2\Leftrightarrow \gamma^{-1}\alpha\gamma H_2 = H_2.$$
Hence $\Gamma_1 = \gamma \Gamma_2 \gamma^{-1}$ and therefore we have only finitely many $\Gamma'$-conjugacy classes of the stabilisers. Since $\Gamma$ has finite index in $\Gamma' = \mathrm{Comm}(\Gamma)$, we have finitely many $\Gamma$-conjugacy classes. Therefore, there are only finitely many fc-subspaces.

\subsubsection*{(iv)} In \cite{Sam14}, Samet obtained an effective upper bound for the number of conjugacy classes of finite subgroups of a lattice in terms of covolume. Let $\Gamma' =  \mathrm{Comm}(\Gamma)$ be the commensurator of the non-arithmetic lattice $\Gamma$. As discussed in part (iii), the fc-subspaces of $M = \HH^n/\Gamma$ 
correspond to the strata in the natural orbifold stratification of $M' = \HH^n/\Gamma'$ defined by 
$$M'_{[F]} = H/\mathrm{Stab}_{\Gamma'}(H),$$
where $F<\Gamma'$ is a finite subgroup, $H = \mathrm{Fix}(F)$, and any finite subgroup conjugate to $F$ in $\Gamma'$ gives the same stratum.

By \cite[Theorem~1.3]{Sam14} the number of such strata is bounded above by $c\cdot\mathrm{vol}(\HH^n/\Gamma')$, with $c = \mathrm{const}(n)$, a constant that depends on $n$ only. This quantity bounds the number of fc-subspaces of $M$ up to conjugation in $\Gamma'$. To count them up to conjugation in $\Gamma$ we need to multiply the former bound by the index $[\Gamma':\Gamma]$, which gives $c\cdot\mathrm{vol}(\HH^n/\Gamma')[\Gamma':\Gamma] = c\cdot\mathrm{vol}(\HH^n/\Gamma).$

\medskip
As for the main statements of the theorem, we have that (1) follows from (i)--(ii), and (2) follows from (iii)--(iv). \qed

\subsection{Examples of non-fc-subspaces and proof of Theorem~\ref{teo:existence-not-fc-2}}\label{sec:non-fc}

By Theorem \ref{theorem-fc}, all codimension $1$ totally geodesic suborbifolds in an arithmetic orbifold are fc. For non-arithmetic lattices, Theorem~\ref{teo:existence-not-fc-2} gives examples of maximal totally geodesic subspaces of codimension $1$ which are not fc. Note that by \cite{BFMS20} the number of such subspaces is always finite. 

These examples come from the hybrid non-arithmetic lattices constructed by Gromov and Piatetski-Shapiro in \cite{GPS87}. We briefly recall the construction as described by Vinberg in \cite{Vin14} for the reader's convenience.

Suppose that $\Gamma_1,\Gamma_2<\Isom(\mathbb{H}^n)$ are two lattices both containing a reflection $r$ in a hyperplane $H \subset \mathbb{H}^n$ which satisfy the following conditions:

\begin{enumerate}
    \item for any $\gamma \in \Gamma_i$, $i=1,2$, either $\gamma(H)=H$ or $\gamma(H) \cap H = \emptyset$,
    \item $N_{\Gamma_1}(r)=N_{\Gamma_2}(r)= \langle r \rangle \times \Gamma_0$, where $\Gamma_0$ leaves invariant the two half-spaces bounded by $H$. Here $N_{\Gamma_i}(r)$ denotes the normaliser of $r$ in $\Gamma_i$.
    
\end{enumerate} This means that $H$ projects to the same embedded fc-subspace in both quotient orbifolds of $\mathbb{H}^n$ by $\Gamma_1$ and by $\Gamma_2$.

Consider the set $\{\gamma(H)| \gamma \in \Gamma_i\}$ of translates of $H$ for a fixed $i$. This set decomposes the space $\mathbb{H}^n$ into a collection of closed pieces transitively permuted by $\Gamma_i$, and each of these pieces is a fundamental domain for the action of $N_i = \langle \gamma r \gamma^{-1} \mid \gamma \in \Gamma_i \rangle$, the normal closure of $r$ in $\Gamma_i$. Let $D_i$ be one of these pieces, and let $\Delta_i=\{\gamma \in \Gamma_i | \gamma(D_i)=D_i\}$. Then the group $\Gamma_i$ admits a decomposition as a semidirect product $\Gamma_i=N_i\rtimes \Delta_i$. Moreover, by choosing $D_1$ and $D_2$ to lie on opposite sides of the hyperplane $H$, we can ensure that $\Delta_1 \cap \Delta_2=\Gamma_{0}$.

The following result from \cite{GPS87} shows that we can build a new hybrid lattice out of $\Gamma_1$ and $\Gamma_2$ as above.

\begin{theorem}[Gromov and Piatetski-Shapiro]\label{th:GPS}
The group $\Gamma = \langle \Delta_1, \Delta_2 \rangle$ is a lattice in $\mathrm{Isom}(\mathbb{H}^n)$. If $\Gamma_1$ and $\Gamma_2$ are incommensurable arithmetic groups, then the lattice $\Gamma$ is non-arithmetic. 
\end{theorem}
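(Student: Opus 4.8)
The plan is to prove this theorem — which is the Gromov--Piatetski-Shapiro hybrid construction as presented by Vinberg — in two stages: first establish that $\Gamma = \langle \Delta_1, \Delta_2 \rangle$ is a lattice, and then establish non-arithmeticity under the incommensurability hypothesis. For the first stage, I would use a Klein--Poincar\'e type gluing argument. Recall that $\mathbb{H}^n$ is tessellated by the $\Gamma_i$-translates of the reflection hyperplane $H$, and $D_i$ is a fundamental domain for the normal closure $N_i = \langle\!\langle r \rangle\!\rangle_{\Gamma_i}$, so $D_i$ is bounded by translates of $H$ and $\Delta_i$ acts on $D_i$ with a fundamental domain $F_i$ of finite volume (a fundamental domain for $\Gamma_i$ on $\mathbb{H}^n$, since $\Gamma_i = N_i \rtimes \Delta_i$). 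Choosing $D_1$ and $D_2$ on opposite sides of $H$, I would glue $F_1 \subset D_1$ to $F_2 \subset D_2$ along their common face lying in $H$, forming $F = F_1 \cup_H F_2$, a finite-volume polyhedron. The key point is that the face-pairing transformations of $F$ — those of $F_1$, those of $F_2$, together with the identification across $H$ which is the identity on $H$ — generate exactly $\langle \Delta_1, \Delta_2\rangle$, and one checks the Poincar\'e polyhedron theorem hypotheses: the pieces glue up with consistent dihedral angles around codimension-2 faces. The condition (2) that $N_{\Gamma_i}(r) = \langle r\rangle \times \Gamma_0$ with $\Gamma_0$ common to both sides is exactly what guarantees the gluing along $H$ is compatible, i.e.\ that the induced identifications of sub-faces of $H$ agree. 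Poincar\'e's theorem then yields that $F$ is a fundamental domain for the discrete group $\Gamma$ it generates, $\Gamma$ has finite covolume $\mathrm{vol}(F) = \mathrm{vol}(F_1) + \mathrm{vol}(F_2) < \infty$, so $\Gamma$ is a lattice.

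For the second stage, suppose for contradiction that $\Gamma$ is arithmetic. The reflection $r$ still lies in $\Gamma$ (it is in $\Gamma_0 \subset \Delta_1 \cap \Delta_2$, or at least $\langle r\rangle$ sits inside the setup), and $H$ projects to a totally geodesic hypersurface in $\mathbb{H}^n/\Gamma$. I would invoke Theorem~\ref{theorem:cent} and the structure of fc--subspaces: in an arithmetic orbifold every totally geodesic suborbifold is an fc--subspace by Theorem~\ref{theorem-fc}(1), hence $\mathrm{Stab}_\Gamma(H)$ is itself a lattice in $\Isom(H) \cong \Isom(\mathbb{H}^{n-1})$, and $\mathrm{Comm}(\Gamma)$ is dense in $\PO_{n,1}(\mathbb{R})$ by Margulis. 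The classical argument of Gromov--Piatetski-Shapiro then runs as follows: by construction $\mathbb{H}^n$ cut along the full $\Gamma$-orbit of $H$ decomposes into pieces, on one side of $H$ one sees (a conjugate of) a piece commensurable with the $\Gamma_1$-geometry and on the other side with the $\Gamma_2$-geometry. More precisely, the two ``halves'' $\Delta_1$ and $\Delta_2$ are the stabilisers in $\Gamma$ of the two half-spaces bounded by $H$ (via the pieces $D_1$, $D_2$), and each $\Delta_i$ is commensurable with $\Gamma_i$ — indeed $\Delta_i$ has finite index in $\Gamma_i$ because $N_i$ acts on the $\Gamma_i$-tessellation by $H$-translates with finitely many orbits of pieces, so $[\Gamma_i : \Delta_i]$ equals the number of such pieces, which is finite (as $\mathrm{vol}(\mathbb{H}^n/N_i) = [\Gamma_i:\Delta_i]\,\mathrm{vol}(\mathbb{H}^n/\Gamma_i)$ and the former is finite — wait, $N_i$ is not a lattice; rather $\mathrm{vol}(F_i) = \mathrm{vol}(\text{fund.\ domain of }\Gamma_i)$, so $\Delta_i \subset \Gamma_i$ has covolume equal to that of $\Gamma_i$, hence finite index).

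Now the crucial step: if $\Gamma$ is arithmetic, then so is every finitely-generated subgroup of $\mathrm{Comm}(\Gamma)$ that happens to be a lattice, and in particular the stabiliser of a half-space, appropriately interpreted, forces $\Delta_1$ and $\Delta_2$ to be commensurable \emph{to each other}. The cleanest way to see this is: arithmeticity of $\Gamma$ implies $\mathrm{Comm}(\Gamma)$ is dense, and the reflection $r \in \Gamma$ gives a totally geodesic fc--hypersurface $H/\mathrm{Stab}_\Gamma(H)$; the two ``sides'' of this hypersurface, being preserved by $\Delta_1$ and $\Delta_2$ respectively, must both lie in the same commensurability class as $\Gamma$, hence $\Delta_1 \sim \Delta_2$ in the wide sense, hence $\Gamma_1 \sim \Gamma_2$ (since $\Delta_i$ has finite index in $\Gamma_i$), contradicting the incommensurability hypothesis. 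I expect the main obstacle to be making this last commensurability deduction rigorous — specifically, showing that arithmeticity of the hybrid $\Gamma$ forces the two glued halves to be commensurable. The standard reference argument (in \cite{GPS87} and \cite{Vin14}) uses that the adjoint trace field and the ambient $k$-group are commensurability invariants (Vinberg's theorem, cited earlier in the excerpt): one restricts attention to $\Delta_1$ and $\Delta_2$ as subgroups of the arithmetic $\Gamma$, deduces they have the same invariant trace field and quaternion/quadratic-form data as $\Gamma$, hence the same as each other, hence $\Gamma_1$ and $\Gamma_2$ share these invariants and are therefore commensurable — the contradiction. This invariant-theoretic route avoids delicate volume computations and is the one I would write out in full.
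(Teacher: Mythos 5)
The paper does not prove this theorem at all --- it is quoted from \cite{Vin14} (following \cite{GPS87}) --- so your proposal has to stand on its own. Your first half (gluing $F_1$ and $F_2$ along a common fundamental domain of $\Gamma_0$ in $H$ and invoking a Poincar\'e/combination argument, with covolume $\mathrm{vol}(F_1)+\mathrm{vol}(F_2)$) is the standard route and is fine as a sketch. The non-arithmeticity half, however, contains a genuine error: $\Delta_i$ does \emph{not} have finite index in $\Gamma_i$. Since $\Gamma_i = N_i \rtimes \Delta_i$ with $N_i$ the normal closure of the reflection $r$, the index $[\Gamma_i:\Delta_i]$ equals the cardinality of $N_i$, which is infinite ($N_i$ is generated by reflections in the infinitely many $\Gamma_i$-translates of $H$). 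Equivalently, $F_i$ is a fundamental domain only for the action of $\Delta_i$ on the piece $D_i$, not on $\mathbb{H}^n$; as a subgroup of $\Isom(\HH^n)$, $\Delta_i$ has infinite covolume and is not a lattice. Consequently the chain ``the two sides lie in the commensurability class of $\Gamma$, hence $\Delta_1\sim\Delta_2$, hence $\Gamma_1\sim\Gamma_2$ since $\Delta_i$ has finite index in $\Gamma_i$'' collapses at both ends: commensurability-class language does not apply to the non-lattices $\Delta_i$, and the final transfer to $\Gamma_1\sim\Gamma_2$ rests on the false finite-index claim. The auxiliary assertion that any lattice inside $\mathrm{Comm}(\Gamma)$ of an arithmetic $\Gamma$ must be arithmetic (commensurable with $\Gamma$) is also false --- properly quasi-arithmetic lattices lie in $\G(k)\subseteq\mathrm{Comm}(\Gamma)$ and are not commensurable with $\Gamma$ --- and the detour through Theorem~\ref{theorem:cent} and Theorem~\ref{theorem-fc} tells you about $\Stab_\Gamma(H)$ but contributes nothing towards comparing $\Gamma_1$ with $\Gamma_2$.

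The correct mechanism, which you only gesture at in your closing sentences, is the Gromov--Piatetski-Shapiro commensurability lemma: two arithmetic lattices in the same simple Lie group that contain a common \emph{Zariski-dense} subgroup are commensurable. One applies it to the pairs $(\Gamma,\Gamma_1)$ and $(\Gamma,\Gamma_2)$ with common subgroups $\Delta_1$ and $\Delta_2$, obtaining $\Gamma_1\sim\Gamma\sim\Gamma_2$ and the desired contradiction. To make this work you must (a) prove that $\Delta_i$ is Zariski dense in $\PO_{n,1}$ --- e.g.\ its Zariski closure contains the closure of the lattice $\Gamma_0 < \Isom(H)$ together with elements not preserving $H$, and one must exclude proper algebraic overgroups of $\OO_{n-1,1}(\R)$ --- and (b) show that the $k$-form furnished by Vinberg's theorem \cite{Vin71} for the common Zariski-dense subgroup pins down the arithmetic structure of \emph{both} lattices, so that agreement of these data forces wide commensurability; this uses arithmeticity of both $\Gamma$ and $\Gamma_i$, not merely the invariance of the adjoint trace field. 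Neither point appears in your proposal, and since the argument you actually wrote leans on the erroneous finite-index shortcut, the non-arithmeticity half has a genuine gap.
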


As an abstract group, $\Gamma=\Delta_1 *_{\Gamma_0} \Delta_2$ (the amalgamated product of $\Delta_1$ and $\Delta_2$ along their common subgroup $\Gamma_{0}$).
It is not difficult to build, for all $n\geq 3$, incommensurable torsion-free arithmetic lattices of simplest type $\Gamma_1, \Gamma_2 < \mathrm{Isom}(\mathbb{H}^n)$ satisfying the conditions above (see, for instance, \cite{GPS87} or \cite{KRS}). 

It is fairly easy to check that, with the construction above, the hyperplane $H$ projects to a totally geodesic subspace $H/\Gamma_0$ in the orbifold $\mathbb{H}^n/\Gamma$. We claim that this is not an fc-subspace.

First, we observe that $\Gamma_i = \langle \Delta_i, r \rangle$. Indeed, the hyperplane $H$ projects onto a connected totally geodesic subspace in the orbifold $M_i = \HH^n/\Gamma_i$. Removing this connected subspace gives an infinite-volume orbifold $M_i^+$ whose universal cover is precisely the domain $D_i$. In fact, $M_i^+ = D_i/\Delta_i$. The deck group of any covering acts transitively on the fibers, and therefore $N_i$ is generated by $\Delta_i$-conjugates of $r$. Hence, $\Gamma_i = \langle \Delta_i, r \rangle$. 

Arguing by contradiction, we assume that the reflection $r$ in the hyperplane $H$ commensurates the lattice $\Gamma$. Theorem~\ref{th:GPS} together with the Margulis commensurator rigidity implies that $\mathrm{Comm}(\Gamma)$ is itself a lattice. Moreover, $\mathrm{Comm}(\Gamma)$ contains $r$ and each $\Delta_i$. Hence, lattices $\Gamma_i = \langle \Delta_i, r \rangle$ are contained in the lattice $\mathrm{Comm}(\Gamma)$, whence they are finite-index subgroups of $\mathrm{Comm}(\Gamma)$. This implies that $\Gamma_1$ and $\Gamma_2$ are commensurable. The proof of Theorem \ref{teo:existence-not-fc-2} is now complete. \qed

\begin{remark}
As shown by Vinberg \cite{Vin14}, the above construction of hybrid lattices can also be used to construct examples of finite-volume non-arithmetic Coxeter polytopes. The discussion above applies without modifications to these examples, providing non-arithmetic Coxeter lattices which contain codimension-one non-fc sublattices.
\end{remark}

\subsection{Proof of Theorem \ref{teo:existence-not-fc-twist-knots}}\label{sec:Le-Palmer}
Here we provide examples of $3$-dimensional, non-arithmetic hyperbolic manifolds $M_k$, $k\geq 1$, with the property that all their codimension one totally geodesic subspaces are non-fc and there are precisely $k$ of them. As far as we can see, unlike the examples by Gromov and Piatetski-Shapiro discussed above, these subspaces do not arise as the glueing locus of a hybrid lattice. These manifolds $M_k$ are obtained by Le and Palmer \cite{LP20} as the $k$-sheeted cover of some hyperbolic $3$-manifold $N_j$ described below.

\begin{prop}[Le and Palmer \cite{LP20}]
Let $N_j=\mathbb{S}^3 \setminus K_j$ be the (hyperbolic) complement in $\mathbb{S}^3$ of a twist knot $K_j$ with $j$ half-twists. If the number $j$ is an odd prime, then $N_j$ contains a unique immersed totally geodesic surface $S$ homeomorphic to a thrice-punctured sphere (see Fig.~\ref{fig:twist-knots}).
\end{prop}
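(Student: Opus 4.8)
The plan is to establish existence and uniqueness separately. The three tools I would use are Adams's rigidity theorem for thrice--punctured spheres (any properly immersed essential thrice--punctured sphere in a complete hyperbolic $3$--manifold is isotopic to a unique totally geodesic one), the smallness of two--bridge knot complements (they contain no closed essential surface), and a trace--field argument that reduces the count of totally geodesic surfaces to a finite arithmetic problem; the solution of that finite problem, which is the computational core of \cite{LP20}, will be the main obstacle.

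For existence, I would produce an essential, properly immersed thrice--punctured sphere $S_0$ in $N_j$ directly from the two--bridge structure of $K_j$: its reduced alternating diagram has a $j$--crossing twisting region and a clasp, and for $j$ odd the Hatcher--Thurston list of incompressible surfaces (equivalently, a direct diagrammatic construction) contains a thrice--punctured sphere. This is the step where the parity of $j$ is used. Applying Adams's theorem then upgrades $S_0$ to a totally geodesic thrice--punctured sphere $S\subset N_j$. (An alternative realisation: $N_j$ is obtained by $1/j$ surgery on one cusp of the Whitehead link complement, which contains an essential thrice--punctured sphere that can be isotoped off a horoball neighbourhood of the surgered cusp and so survives the filling.)

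For uniqueness, I would first invoke the theorem of Hatcher and Thurston: $N_j$ is small, hence has no closed essential surface and no closed totally geodesic surface. So every finite--area immersed totally geodesic surface is of the form $\Sigma=\HH^2/F$ with $F<\Gamma_j:=\pi_1(N_j)\subset\mathrm{PSL}_2(\CC)$ a non--elementary Fuchsian subgroup whose cusps all run down the unique cusp of $N_j$. The invariant trace field $k\Sigma$ is then a subfield of $\R$ lying inside the invariant trace field $kN_j$; but $kN_j\not\subset\R$ because $N_j$ is not Fuchsian, and for $j$ an odd prime $[kN_j:\Q]$ is prime, so $\Q$ is its only proper subfield and $k\Sigma=\Q$. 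By Takeuchi's criterion a Fuchsian group with invariant trace field $\Q$ is arithmetic, and having cusps it is then commensurable with $\mathrm{PSL}_2(\Z)$; thus $\Sigma$ is a cusped surface commensurable with the modular orbifold. Hence totally geodesic surfaces in $N_j$ correspond exactly to $\Gamma_j$--conjugacy classes of maximal $\mathrm{PSL}_2(\Z)$--commensurable Fuchsian subgroups of $\Gamma_j$.

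It then remains to show that there is exactly one such class and that it is realised by the thrice--punctured sphere $S$ of the first step. Here I would use that $K_j$ is non--arithmetic (Reid: the figure--eight knot is the only arithmetic knot), so $\Comm(\Gamma_j)$ is a lattice containing $\Gamma_j$ with finite index; the commensurability analysis of $K_j$ in \cite{RW} pins it down, and an explicit computation with matrix generators of $\Gamma_j$, carried out in \cite{LP20}, counts the maximal $\mathrm{PSL}_2(\Z)$--commensurable Fuchsian subgroups and gives the answer $1$. This enumeration is the hard part: the trace--field argument reduces the whole problem to it cleanly, but performing the count uniformly in $j$ — controlling $\Comm(\Gamma_j)$, ruling out hidden symmetries, and using the primality of $j$ to exclude extra surfaces — is the delicate technical content I would import from \cite{LP20} and \cite{RW}.
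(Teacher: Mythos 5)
The first thing to say is that the paper does not prove this proposition at all: it is imported from Le--Palmer, as the attribution ``[Le and Palmer \cite{LP20}]'' indicates, so there is no internal argument to compare yours against. Judged on its own terms, your sketch is circular exactly where the content lies. The statement to be proved \emph{is} essentially the main theorem of \cite{LP20}, and after your reductions you explicitly import ``the computational core'' --- the count of $\mathrm{PSL}_2(\Z)$--commensurable Fuchsian subgroups of $\Gamma_j$, the control of $\Comm(\Gamma_j)$, and the use of the primality of $j$ --- from \cite{LP20} itself. Citing \cite{LP20} for the whole statement, as the paper does, is legitimate; presenting a proof whose decisive step is that citation is not a proof.

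The reductions you do carry out also contain genuine gaps. First, Hatcher--Thurston smallness of two--bridge complements excludes \emph{embedded} closed essential surfaces, while the totally geodesic surfaces at issue are immersed (the surface $S$ itself has a self--intersection, cf.\ Figure~\ref{fig:twist-knots}); so ``small $\Rightarrow$ no closed totally geodesic surface'' does not follow as stated. Closed immersed totally geodesic surfaces must be excluded arithmetically, e.g.\ by pushing your own trace--field argument further: $k\Sigma=\Q$ together with integrality of traces and Takeuchi's criterion would make a closed such $\Sigma$ arithmetic with a rational \emph{division} quaternion algebra, which cannot become split over the odd--degree field $kN_j$, contradicting $A\Sigma\otimes_{\Q}kN_j\cong M_2(kN_j)$. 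Second, Takeuchi's criterion requires integral traces, not merely $k\Sigma=\Q$, so the integrality of traces of twist--knot groups must be noted. Third, the assertion that $[kN_j:\Q]$ is prime when $j$ is an odd prime is precisely where the hypothesis enters and is a nontrivial input (irreducibility and degree of the Riley polynomial, following Hoste--Shanahan); it cannot simply be asserted. Finally, for existence, the Hatcher--Thurston list consists of embedded incompressible surfaces and does not contain the immersed thrice--punctured sphere; the correct route is your parenthetical alternative --- the explicit immersed thrice--punctured sphere spanning the clasp (equivalently, the one inherited from the Whitehead link complement), upgraded to a totally geodesic one by Adams--type rigidity for groups generated by parabolics with parabolic product --- which is also how the paper itself interprets $S$ in its Dehn--filling remark.
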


\begin{figure}
    \centering
    \includegraphics[width=6.5cm]{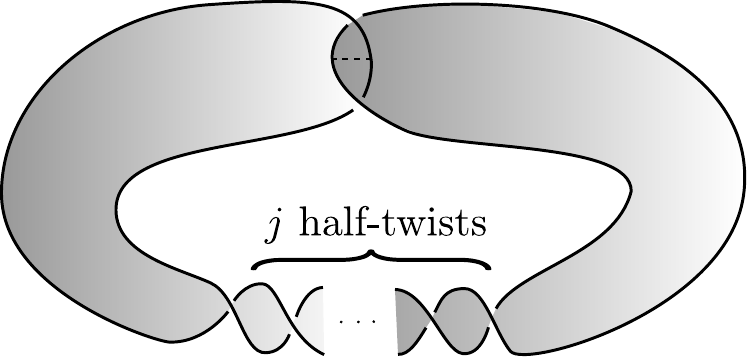}
    \caption{A twist knot in the $3$-sphere, together with the immersed thrice-punctured sphere $S$ (shaded). The self-intersection of $S$ is drawn as a dashed line.}
    \label{fig:twist-knots}
\end{figure}

Notice that if $j=2$, the manifold $N_j$ is the complement of the figure-eight knot, which is the only arithmetic hyperbolic knot complement by the work of Reid \cite{Reid}. For all $j\neq 2$ the manifold $N_j$ is therefore non-arithmetic. We claim that in the latter case the surface $S$ is non-fc.

\begin{prop}
Let $N_j$ be the (hyperbolic) complement in $\mathbb{S}^3$ of a twist knot $K_j$ with $j$ half-twists, where $j$ is an odd prime. The totally geodesic thrice-punctured sphere $S$ is not an fc-subspace.
\end{prop}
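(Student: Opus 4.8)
The plan is to argue by contradiction, using the same mechanism that ruled out the fc-property for the Gromov--Piatetski-Shapiro hybrids, together with a rigidity/uniqueness argument specific to the twist-knot setting. Suppose $S$ is an fc-subspace of $N_j = \HH^3/\Gamma$. Then by definition there is a finite subgroup $F < \Comm(\Gamma)$ with $\Fix(F)$ equal to the plane $\widetilde S \cong \HH^2$ covering $S$. Since $\widetilde S$ is a hyperplane, $F$ must be generated by a single order-two element: either the reflection $r$ in $\widetilde S$, or the rotation of angle $\pi$ about $\widetilde S$ (a geodesic line's worth of symmetry is not available here because $\dim \widetilde S = 2$ in $\HH^3$). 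The first possibility, $r \in \Comm(\Gamma)$, is the one to kill by the hybrid-type argument of Section~\ref{sec:non-fc}; the second, involving the rotation, needs to be excluded separately, and this is where I expect the main obstacle to lie.

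For the reflection case, I would mimic the proof of Theorem~\ref{teo:existence-not-fc-2} almost verbatim. The thrice-punctured sphere $S$ separates... no — more carefully: the preimages of $S$ in $\HH^3$ form a $\Gamma$-invariant collection of hyperplanes, and I want to show that if $r$ commensurated $\Gamma$ then $\Gamma$ would decompose (up to finite index) as an amalgam or HNN extension along the surface group $\pi_1(S)$, contradicting a known indecomposability property of $\pi_1(N_j)$. Here one should invoke that $N_j$ is a knot complement with a single cusp, and that $S$ is a properly immersed (not embedded) thrice-punctured sphere whose self-intersection is a single closed geodesic (Fig.~\ref{fig:twist-knots}); passing to the cover of $N_j$ in which $S$ lifts to an embedding, one can apply the JSJ/amalgam obstruction. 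Alternatively — and this is probably cleaner — one uses that $\Comm(\Gamma)$ is a non-arithmetic lattice (Margulis), so that $r \in \Comm(\Gamma)$ would force $r$ to normalise a finite-index subgroup of $\Gamma$, hence $S$ would be, up to finite cover, the fixed locus of a reflection symmetry of a finite cover $N'$ of $N_j$; but a reflection symmetry of a hyperbolic $3$-manifold with fixed surface $S$ exhibits $N'$ as the double of a manifold-with-boundary along $S$, and one checks this is incompatible with $S$ being the \emph{unique} totally geodesic surface (the reflection would have to fix $S$ setwise and also fix the self-intersection geodesic of $S$, giving further geometric constraints that the twist-knot complement does not satisfy).

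For the rotation-by-$\pi$ case, I would again pass to a finite cover in which the relevant element $\rho$ (order two, rotating about $\widetilde S$) normalises $\Gamma$: then $\rho$ descends to an orientation-preserving involution of a finite cover $N'$ of $N_j$ whose fixed-point set is the totally geodesic surface $S$ (lifted). An orientation-preserving involution of a hyperbolic $3$-manifold cannot have a $2$-dimensional fixed-point set — the fixed set of an orientation-preserving finite-order isometry of $\HH^3$ is a geodesic (a rotation axis), of codimension two, not codimension one. This is the key local observation: the fixed locus of $F$ being a hyperplane forces $F$ to contain an orientation-reversing element, i.e. a genuine reflection, which throws us back to the reflection case already handled. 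So in fact the two cases collapse into one, and the whole proof reduces to: (a) the elementary linear-algebra remark that in $\Isom(\HH^3)$ a finite subgroup with $2$-dimensional fixed set must contain the reflection $r$ in that plane, and (b) the amalgam/commensurator argument showing $r \notin \Comm(\Gamma)$.

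The main obstacle, as I see it, is executing step (b) rigorously: unlike the GPS examples, $\Gamma$ here is not manifestly an amalgam along $\pi_1(S)$, so one must actually produce the splitting from the hypothesis $r \in \Comm(\Gamma)$. The cleanest route is probably: set $\Gamma' = r\Gamma r^{-1} \cap \Gamma$, $\Gamma'' = \langle \Gamma', r\rangle$, note $\Gamma''$ is a lattice commensurable with $\Gamma$ containing the reflection $r$ whose mirror covers (a finite cover of) $S$; then the mirror $\widetilde S$ and its $\Gamma''$-translates are pairwise disjoint or equal (because in $\HH^3/\Gamma''$ the image of $\widetilde S$ is a totally geodesic \emph{embedded} surface fixed by a reflection — self-intersections are impossible for the mirror of a reflection, since a reflection mirror is embedded); this disjointness is exactly the geometric input that forces the amalgam/HNN decomposition of $\Gamma''$ along $\pi_1(\widetilde S/\Stab)$, and hence of a finite-index subgroup of $\pi_1(N_j)$. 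But $S$ is well-known to be non-separating and self-intersecting, so its preimage cannot consist of disjoint embedded planes — contradiction. I would write this last contradiction carefully, citing \cite{RW} or \cite{LP20} for the geometric description of $S$ (its self-intersection being a single closed geodesic, and $S$ being non-separating in any finite cover of $N_j$).
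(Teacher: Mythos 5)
Your reduction of the problem is fine and matches the first observation in the paper's proof: since $\Fix(F)$ is the $2$--dimensional plane $\widetilde S\subset\HH^3$, the only non-trivial isometry fixing $\widetilde S$ pointwise is the reflection $r$ in it, so everything hinges on showing $r\notin\Comm(\Gamma)$. But your step (b), the amalgam argument, has a genuine gap — in fact two. First, the claim that the $\Gamma''$--translates of the mirror $\widetilde S$ are pairwise disjoint or equal ``because the mirror of a reflection is embedded'' is false: in a discrete group containing reflections, conjugate mirrors may perfectly well intersect (at angles $\pi/m$); any hyperbolic Coxeter group is a counterexample. In the Gromov--Piatetski-Shapiro setting of Section~\ref{sec:non-fc} this disjointness was a hypothesis built into the construction (condition (1)), not a consequence of $r$ being a reflection, so it cannot be imported here. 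Second, even if you did obtain a splitting of a finite-index subgroup of $\pi_1(N_j)$ over the surface subgroup, that is not a contradiction: knot groups and their finite-index subgroups do split over embedded incompressible surfaces (Seifert surfaces already give HNN decompositions), and by separability of geometrically finite subgroups the immersed geodesic thrice-punctured sphere does lift to an embedded surface in some finite cover, so ``the preimage cannot consist of disjoint planes'' fails for the orbit under a finite-index subgroup. Note also that the contradiction in the paper's proof of Theorem~\ref{teo:existence-not-fc-2} is not an indecomposability statement but the forced commensurability of the two incommensurable building blocks, an ingredient with no analogue for $N_j$.

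The idea you are missing is the one the paper actually uses: by Reid--Walsh \cite{RW}, a non-arithmetic $2$--bridge knot complement has no hidden symmetries, i.e.\ $\Comm(\Gamma)$ coincides with the normaliser of $\Gamma$ in $\Isom(\HH^3)$. Hence $r\in\Comm(\Gamma)$ would descend to an honest symmetry of $N_j$ itself, not merely of a finite cover, and it would be orientation-reversing since it is a reflection. This is then excluded by chirality of the twist knot $K_j$ for odd $j$, read off from its Jones polynomial (since $V(q)\neq V(q^{-1})$). Your alternative sketch (``reflection symmetry of a finite cover $N'$\dots'') runs into exactly the issue that chirality of $N_j$ says nothing a priori about orientation-reversing symmetries of its finite covers; ruling those out is precisely what the no-hidden-symmetries theorem is for, and your concluding ``one checks this is incompatible with $S$ being the unique totally geodesic surface'' is an assertion, not an argument.
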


\begin{proof}
This follows from the work of Reid and Walsh \cite[Theorem 3.1]{RW}, which proves that if the complement in $\mathbb{S}^3$ of a $2$-bridge knot is non-arithmetic, then it admits no hidden symmetries. Translated in our terminology, this means that if $N = \mathbb{H}^3/\Gamma$ is a non-arithmetic $2$-bridge knot, then $\mathrm{Comm}(\Gamma)$ coincides with the normaliser of $\Gamma$ in $\mathrm{Isom}(\mathbb{H}^3)$. Equivalently, the elements of $\mathrm{Comm}(\Gamma)$ correspond to symmetries of $N$ (rather than symmetries of some finite-index cover of~$N$).

Twist knots are particular examples of $2$-bridge knots, so the above result applies to $N = N_j$, $j\neq 2$. It therefore suffices to prove that there is no symmetry of $N_j$ that fixes $S$ pointwise. Note that such a symmetry would have to be a reflection in $S$, and therefore an orientation reversing element of $\mathrm{Isom}(N_j)$.

However, the Jones polynomial of the twist knot $K_j$ , $j$ odd, is
\begin{equation*}\label{eq:jones}
V(q)=\frac{1+q^{-2}+q^{-j}+q^{-j-3}}{q+1}.
\end{equation*}
Since $V(q) \neq V(q^{-1})$, the twist knot complement $N_j$ is chiral, i.e.\ it admits no orientation-reversing symmetries.
\end{proof}

\medskip

As shown in \cite{LP20}, for any $N_j$ as above there exists a degree $k$ cover $M_k$ that contains exactly $k$ lifts of $S$, and these are the only  totally geodesic surfaces in $M_k$. Then Theorem~\ref{teo:existence-not-fc-twist-knots} follows. \qed

\medskip
\begin{remark}
All twist knot complements $N_j$ can be obtained through Dehn filling on one component of the Whitehead link complement $\mathbb{S}^3\setminus W$, which is hyperbolic and arithmetic. Moreover, in this case the thrice-punctured sphere $S$ does not intersect the core geodesic of the filling. The surface $S$ corresponds to a totally geodesic thrice-punctured sphere $S'\subset \mathbb{S}^3\setminus W$ which is an fc-subspace by Theorem \ref{theorem-fc}. We therefore obtain that the Dehn filling turns an fc-subspace $S'$ into the non-fc subspace $S$. 
\end{remark}

\subsection{Non-fc subspaces in arithmetic orbifolds}\label{sec:non-fc-arithmetic}
We conclude this section by providing examples of non-fc Weil restriction subspaces in arithmetic lattices of type I. We begin by choosing an extension $K/k$ of totally real fields of odd degree $d$ and an admissible quadratic form $f$ of signature $(m,1)$ defined over $K$. We denote by $\overline{K}$ the Galois closure of the extension $K/k$. Let us denote by $g$ the form $\mathrm{Res}_{K/k}\,f$.  By Proposition \ref{prop:res_of_scalars_typeI} we can choose an arithmetic lattice $\Lambda<\mathrm{PO}_f(K)$ and realise it as a totally geodesic sublattice in an arithmetic lattice $\Gamma<\mathrm{PO}_g(k)$. Notice that the form $\mathrm{Res}_{K/k}\,f$ is $k$-defined, admissible and has signature $(n,1)$ with $n=d\cdot(m+1)-1$.  We claim that $N=\mathbb{H}^m/\Lambda$ is a non-fc subspace of $M=\mathbb{H}^n/\Gamma$.

Let us denote by $\mathcal{U}\cong \mathbb{H}^m$ a lift of $N$ to $\mathbb{H}^n$, and suppose that $\gamma \in \mathbf{O}_g(\mathbb{R})$ commensurates $\Gamma$ and fixes $\mathbf{U}$ pointwise. It follows that $[\gamma] \in \mathbf{PO}_g(k)$ and moreover 
\begin{equation*}
[\gamma] \in \frac{\mathbf{O}_f \times \mathbf{O}_{f^{\sigma_1}}\times \dots \times \mathbf{O}_{f^{\sigma_{d-1}}}}{\langle(-\mathrm{id},-\mathrm{id},\dots,-\mathrm{id})\rangle}
\end{equation*} is invariant under the natural action of $\mathrm{Gal}(\overline{K}/k)$ and is mapped by the projection onto the first factor to the identity element $[\pm \mathrm{id}] \in \mathbf{PO}_f$. Since $\mathrm{Gal}(\overline{K}/k)$ acts transitively on the various factors of the form $\mathbf{O}_{f^{\sigma}}$ and $(\pm\mathrm{id})^{\sigma}=\pm \mathrm{id}$, it follows that necessarily 
\begin{equation}\label{eq:g_signs}
\gamma=(\pm \mathrm{id},\pm \mathrm{id},\dots \pm \mathrm{id}) \in \mathbf{O}_f \times \mathbf{O}_{f^{\sigma_1}}\times \dots \times \mathbf{O}_{f^{\sigma_{d-1}}}.
\end{equation}

We claim that $\gamma=\pm(\mathrm{id}, \mathrm{id},\dots,\mathrm{id})$, and thus $[\gamma]=\mathrm{id} \in \mathbf{PO}_g$. Indeed, denote by $n_+$ the number of $+\mathrm{id}$ entries and by $n_-$ the number of $-\mathrm{id}$ entries in \eqref{eq:g_signs}. It is clear that the action of $\mathrm{Gal}(\overline{K}/k)$ does not change the values of $n_+$ and $n_-$, as it simply permutes the entries, while multiplication by $(-\mathrm{id},-\mathrm{id},\dots,-\mathrm{id})$ changes $\mathrm{id}$ to $-\mathrm{id}$ and vice-versa, thus exchanging the values of $n_+$ and $n_-$. Since we require that $\gamma^{\sigma}$ be equal to $\pm \gamma$ for any $\sigma \in \mathrm{Gal}(\overline{K}/k)$, the only possibilities are the following:
\begin{enumerate}
    \item $n_+=d$, $n_-=0$, $\gamma^{\sigma}=\gamma$ for all $\sigma \in \mathrm{Gal}(\overline{K}/k)$, and $$[\gamma]=\mathrm{id} \in \mathbf{PO}_g;$$
    \item $n_+=0$, $n_-=d$, $\gamma^{\sigma}=\gamma$ for all $\sigma \in \mathrm{Gal}(\overline{K}/k)$, and $$[\gamma]=\mathrm{id} \in \mathbf{PO}_g;$$
    \item both $n_+$ and $n_-$ are non-zero, and this implies that $\gamma^{\sigma}=-\gamma$ for some $\sigma \in \mathrm{Gal}(\overline{K}/k)$. 
\end{enumerate}
Case (3) is impossible. Indeed, this would imply that $n_+=n_-$, but in our case, $d = n_+ + n_-$ is odd. This proves the claim.

We conclude by noticing that $N$ is by construction a proper Weil restriction subspace of $M$. Moreover we have just proven that $\gamma \in \mathrm{Comm}(\Gamma)$ fixes $\mathcal{U}$ pointwise if and only if $[\gamma]$ is the identity element in the group $\mathbf{PO}_g(\mathbb{R}) \cong\mathrm{Isom}(\mathbb{H}^n)$. This implies that there is no proper fc-subspace of $M$ that contains $N$, and therefore $N$ itself is not an fc-subspace.

\section{Examples}\label{sec:example-section}
\subsection{Involutions and quadratic extensions}\label{sec:invol-quadr-extension}
We now provide explicit examples of involutions in $\mathbf{PO}_f (k)$ which cannot be represented by an element of $\mathrm{O}_f(k)$, as discussed in Section~\ref{sec:k-involutions-type1}. Here $k=\mathbb{Q}$ and $f$ is a (trivially) admissible form of signature $(n,1)$, with $n$ odd. An analogous example in the setting of arithmetic lattices in $\mathbf{PGL}_2(\mathbb{C})$ is described in \cite[p. 1314]{KRS}.

Let $k=\mathbb{Q}$, $V=\mathbb{Q}^{n+1}$ and consider the symmetric bilinear form whose matrix representation with respect to the standard basis $(e_0, e_1, \dots, e_n)$ is given by the following block-diagonal matrix:

$$A=\begin{bmatrix}0&1 \\ 1 & 0  \end{bmatrix} \oplus_m \begin{bmatrix}2&0 \\ 0 & 2  \end{bmatrix},$$ where $m=(n-1)/2$.

Now, consider the following matrix:
$$M=\begin{bmatrix}0&-1/\sqrt{5} \\ -\sqrt{5} & 0  \end{bmatrix} \oplus_m \begin{bmatrix}2/\sqrt{5}&1/\sqrt{5} \\ 1/\sqrt{5} & -2/\sqrt{5}\end{bmatrix}.$$

The patient reader can manually check that $M$ is invertible and that conjugation of a matrix $N \in \mathrm{GL}_{n+1}$ by $M$ is a polynomial function with \textit{rational} coefficients in the entries of $N$. Moreover, $M^{t}AM=A$ and $M^2=\mathrm{id}$, therefore the matrix $M$ corresponds to an involution in $\mathbf{PO}_f (\mathbb{Q})$.

The positive eigenspace $V^+$ relative to the eigenvalue $1$ for $M$ has dimension $(n+1)/2$, with orthogonal basis $\mathcal{B}_+$ given by:
$$\mathcal{B}_+=(e_0-\sqrt{5}e_1, e_{2i}+(\sqrt{5}-2)e_{2i+1}),\;i=1,\dots,m.$$

The negative eigenspace $V^-$ relative to the eigenvalue $-1$ has the same dimension $(n+1)/2$, with orthogonal basis $\mathcal{B}_-$ given by:
$$\mathcal{B}_-=(e_0+\sqrt{5}e_1, e_{2i}+(-\sqrt{5}-2)e_{2i+1}),\;i=1,\dots,m.$$
Notice how the vectors of $\mathcal{B}_-$ are obtained from those of $\mathcal{B}_+$ simply by applying to each coordinate the non-trivial Galois automorphism $\sigma$ of $\mathbb{Q}(\sqrt{5})$ which maps $\sqrt{5}$ to $-\sqrt{5}$.

The restriction $g$ of the form $f$ to $V^+$ is represented with respect to $\mathcal{B}_+$ by the diagonal matrix with one entry equal to $-2\sqrt{5}$ and all other entries equal to $20-8\sqrt{5}$ (which is positive). Similarly, the restriction $h$ of the form $f$ to $V_-$ is represented with respect to $\mathcal{B}_-$ by the diagonal matrix with one entry equal to $2\sqrt{5}$ and all other entries equal to $20+8\sqrt{5}$.

Thus we see that $g$ has signature $((n-1)/2,1)$ and  $h=g^{\sigma}$ is positive definite, so that $g$ is admissible. Moreover, the group 
$$\mathrm{Res}_{\mathbb{Q}(\sqrt{5})/\mathbb{Q}}\mathbf{O}(g)_{\mathbb{R}}=\mathbf{O}(g)_{\mathbb{R}} \times \mathbf{O}(h)_{\mathbb{R}}$$
is realised as the subgroup of $\mathrm{O}(f,\mathbb{R})$ which preserves the decomposition 
$\mathbb{R}^{n+1}=(V^+\otimes \mathbb{R}) \oplus (V^-\otimes \mathbb{R}).$ 
The space $U=\mathbb{H}^n \cap (V^+\otimes\mathbb{R})$ projects to an arithmetic finite-volume totally geodesic subspace in $\mathbb{H}^n/\mathrm{PO}_f(\Z)$ which is a Weil restriction subspace with adjoint trace field $\mathbb{Q}(\sqrt{5})$ and ambient group $\mathbf{PO}_g$.

\subsection{An embedding of a type-I lattice in a type-II lattice}\label{sec:type-I-in-type-II}

In this subsection, we construct an explicit example of a type-I arithmetic hyperbolic orbifold realised as a Weil restriction subspace of a type-II arithmetic hyperbolic orbifold.

Consider the rational quaternion algebra $D' = \left(\frac{-1,\, 3}{\mathbb{Q}}\right)$. By the discussion in \cite[p.~88]{Mac-Reid}, $D'$ is a division algebra. Let $K = \mathbb{Q}(\sqrt{3})$. Notice that the $K$-algebra
$D = D' \otimes K$ splits since $3$ is a square in $K$. We fix an isomorphism $D\cong \left(\frac{1,\, 1}{\mathbb{Q}(\sqrt{3})}\right)\cong M_2(\mathbb{Q}(\sqrt{3}))$ and denote by $\mathbf{i}, \mathbf{j}$ the standard generators of the split quaternion algebra.

Let us now consider the admissible $K$-form of signature $(2m-1, 1)$ given by 
$$
f(\mathbf{x})=-\sqrt{3}x_0^2+x_1^2+\ldots+x_{2m-1}^2.
$$ 

The form $f$ can be interpreted as a form on $D^m_{+}=\{x \in D^m | x\,\mathbf{i} = x\}$, which is a $K$-subspace of dimension $2m$ of the right $D$-module $D^m$ as in Section \ref{sec:unitary-lattices}: it sufficient to fix a basis for $D^m_{+}$ in order to identify it with $K^{2m}$. 

We now extend the form $f$ to a skew-Hermitian form $F$ on $D^m$ by setting
\begin{multline*}
F(x_1 + y_1\, \mathbf{j}, x_2 + y_2\, \mathbf{j} ) = f(x_1, x_1)(\I-1)\J + f(x_1, y_2)(\I-1) + \\
+ f(x_2, y_1)(\I+1) + f(y_1, y_2)(\I+1)\J
\end{multline*} 
for all $x_1, y_1, x_2, y_2 \in D_+^m$.
The admissibility of $F$ follows directly from the admissibility of the initial form $f$.

Let us now consider an arithmetic lattice $\Lambda < \mathrm{U}_F(D)$. Since $D \cong M_2(K)$, we have that $\Lambda$ is a type-I lattice by the discussion in Section \ref{sec:unitary-lattices}. By applying Proposition \ref{prop:Weil-restriction-type-II} we realise $\Lambda$ as a totally geodesic sublattice in $\Gamma<\mathrm{U}_G(D')$, where $G=\mathrm{Res}_{K/k}\,F$ is an admissible skew-Hermitian form on $(D')^{2m}$. It follows that $\Gamma$ is a type-II lattice and that the type-I orbifold $M=\mathbb{H}^{2m-1}/\Lambda$ is realised as a Weil restriction subspace in the type-II orbifold $N=\mathbb{H}^{4m-1}/\Gamma$.

\subsection{One curious  example: a $5$-dimensional simplex}\label{sec:examples}

We turn our attention to the totally geodesic suborbifolds of the orbifold $\mathbb{H}^5/\Gamma$ corresponding to the group $\Gamma$ generated by reflections in the faces of a $5$-dimensional hyperbolic simplex $S$ with Coxeter diagram represented in Figure~\ref{fig:5-simplex}.

\begin{figure}[h]
    \centering
    \includegraphics[width=3.5cm]{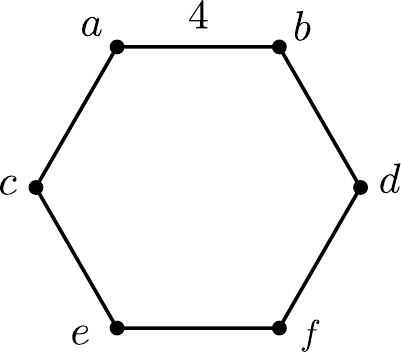}
    \caption{The Coxeter diagram of a non-compact hyperbolic $5$-simplex, with labels for its facets.}
    \label{fig:5-simplex}
\end{figure}

This simplex is non-compact and has $2$ ideal vertices. In \cite{Vin67}, Vinberg showed that the corresponding reflection group $\Gamma$ is a non-arithmetic lattice defined over the field $K = \Q(\sqrt{2})$. More recently, it was shown that $\Gamma$ is not quasi-arithmetic either (it is a pseudo-arithmetic lattice, see \cite{FLMS18} and \cite{EM20}), and that it is not commensurable with any lattice obtained by gluing arithmetic pieces, such as GPS \cite{GPS87} or ABT \cite{BT11} lattices.

Now we exhibit a $2$-dimensional arithmetic fc-subspace of this orbifold. This embedding is interesting because it does not arise through the standard techniques described, for instance, in \cite {KRS} and \cite{KRiS}. The first step consists in characterising the maximal lattice $\Gamma'$ corresponding to the commensurator of $\Gamma$. Notice that there is an isometric involution of $S$ which acts on the facets as the permutation $\tau = (a,b)(c,d)(e,f)$.

\begin{prop}\label{proposition:commensurator-5-simplex}
We have that $\Gamma' = \Comm(\Gamma) = \langle \Gamma, \tau \rangle$.
\end{prop}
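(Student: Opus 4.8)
The plan is to combine Margulis' description of the commensurator of a non--arithmetic lattice with the classical structure of normalisers of reflection groups. Since $\Gamma$ is non--arithmetic, $\Gamma' = \Comm(\Gamma)$ is the \emph{unique maximal} lattice of $\Isom(\HH^5)$ containing $\Gamma$, and $[\Gamma':\Gamma] < \infty$ (see the discussion in Section~\ref{prelim} and \cite{Margulis-book}). The isometric involution $\tau$ of $S$ permutes the facet hyperplanes of $S$, hence conjugates the generating set of reflections of $\Gamma$ to itself; therefore $\tau$ normalises $\Gamma$ and lies in $\Comm(\Gamma) = \Gamma'$. As $\tau$ maps the fundamental chamber $S$ onto itself but is not the identity, while $\Gamma$ acts simply transitively on chambers, we get $\tau\notin\Gamma$, so $\langle\Gamma,\tau\rangle = \Gamma\rtimes\langle\tau\rangle$ has index $2$ over $\Gamma$ and is contained in $\Gamma'$. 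It remains to prove the reverse inclusion $\Gamma'\subseteq\langle\Gamma,\tau\rangle$.

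Next I would pass to the reflection subgroup. Let $\Gamma'_r$ be the subgroup of $\Gamma'$ generated by all reflections contained in $\Gamma'$; it is normal in $\Gamma'$ (a conjugate of a reflection is a reflection), it contains $\Gamma$, and it is a finite--covolume reflection group with $[\Gamma'_r:\Gamma] < \infty$. Since the mirror arrangement of $\Gamma$ is a subarrangement of that of $\Gamma'_r$, the fundamental Coxeter polytope $P'$ of $\Gamma'_r$ may be chosen inside $S$, with $\mathrm{vol}(S) = [\Gamma'_r:\Gamma]\cdot\mathrm{vol}(P')$. The crucial assertion is that $\Gamma'_r = \Gamma$, equivalently that $\Gamma$ is a maximal hyperbolic reflection group, equivalently that the $5$--simplex $S$ admits no proper Coxeter subdivision (no hyperplane meeting $\mathrm{int}(S)$ can be adjoined to the mirrors of $\Gamma$ so as to cut $S$ into finitely many copies of a strictly smaller Coxeter polytope). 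I would verify this by a direct analysis of the diagram of Figure~\ref{fig:5-simplex} via Vinberg's algorithm for detecting reflection subgroups \cite{Vin67}. This combinatorial check is the main obstacle of the argument; everything else is formal.

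Granting $\Gamma'_r = \Gamma$, the group $\Gamma$ is normal in $\Gamma'$, so $\Gamma'\subseteq N_{\Isom(\HH^5)}(\Gamma)$. Any element of $N_{\Isom(\HH^5)}(\Gamma)$ permutes the mirrors of $\Gamma$ and hence the chambers of its tessellation; composing with a suitable element of $\Gamma$ we may assume it preserves $S$ setwise, so $N_{\Isom(\HH^5)}(\Gamma) = \Gamma\rtimes\mathrm{Sym}(S)$, where $\mathrm{Sym}(S)$ is the (finite) group of isometries of $\HH^5$ preserving $S$. Since $S$ is a simplex, its Gram matrix is determined by the labelled Coxeter diagram and every diagram automorphism is realised by an isometry, so $\mathrm{Sym}(S)$ equals the automorphism group of the diagram of Figure~\ref{fig:5-simplex}, which by inspection is the order--two group generated by $\tau = (a,b)(c,d)(e,f)$. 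Therefore $N_{\Isom(\HH^5)}(\Gamma) = \langle\Gamma,\tau\rangle$, whence $\Gamma'\subseteq\langle\Gamma,\tau\rangle$. Together with the first step this yields $\Gamma' = \Comm(\Gamma) = \langle\Gamma,\tau\rangle$, and in passing we see that $\Fix(\tau)\cong\HH^2$ is an embedded totally geodesic suborbifold of $\HH^5/\Gamma'$.
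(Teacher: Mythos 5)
Your reduction is structurally the same as the paper's: show $\tau\in\Comm(\Gamma)\setminus\Gamma$, show that the reflection part of $\Gamma'=\Comm(\Gamma)$ equals $\Gamma$ (i.e.\ $\Gamma$ is a maximal cofinite reflection group), and then conclude via the Vinberg-type decomposition $\Gamma'=\Gamma\rtimes\mathrm{Sym}(S)$ together with the observation that the diagram of Figure~\ref{fig:5-simplex} has automorphism group $\langle\tau\rangle$ of order two. The problem is that the decisive step, $\Gamma'_r=\Gamma$, is exactly the content of the proposition and you do not prove it: you declare it ``the main obstacle'' and propose to settle it ``by a direct analysis via Vinberg's algorithm for detecting reflection subgroups \cite{Vin67}.'' That tool is not apt here. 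Vinberg's algorithm constructs a fundamental polytope for the maximal reflection subgroup of an \emph{arithmetic} group of the form $\mathcal{O}'(L)$ given by a quadratic lattice; the group $\Gamma$ at hand is non-arithmetic (indeed not even quasi-arithmetic), it is not presented as the integral orthogonal group of a form, and in any case running such an algorithm inside $\Gamma$ detects reflection \emph{subgroups} of a given group, whereas what you need is to exclude cofinite reflection groups strictly \emph{containing} $\Gamma$. As written, the proof of the key claim is missing.

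The paper closes this gap by quoting two classification results: by Felikson--Tumarkin \cite{FT10}, any finite-covolume reflection group $\Lambda$ containing the simplex reflection group $\Gamma$ must itself be the reflection group of a (necessarily non-arithmetic, since $\Lambda$ is commensurable with $\Gamma$) hyperbolic Coxeter $5$-simplex; and by the Johnson--Kellerhals--Ratcliffe--Tschantz classification \cite{JKTR}, $S$ is the \emph{only} non-arithmetic Coxeter $5$-simplex, so $\Lambda=\Gamma$. With maximality of $\Gamma$ as a reflection group in hand, the final step ($\Gamma'=\Gamma\rtimes\langle\tau\rangle$ is the maximal lattice, hence the commensurator by Margulis) follows from \cite{Vin67}, and your normaliser computation correctly reproduces that part. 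So the surrounding formal argument in your proposal is fine, but to make it a proof you must either carry out an honest geometric/combinatorial argument that $S$ admits no Coxeter subdivision compatible with a larger reflection group, or replace that sketch by the appeal to \cite{FT10} and \cite{JKTR} as above.
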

\begin{proof}
The group $\Gamma$ can be easily shown to be a maximal co-finite reflection group in $\mathbb{H}^5$. Suppose that there exists a co-finite reflection group  $\Lambda$ which contains $\Gamma$. Then, according to \cite{FT10}, $\Lambda$ must be a reflection group associated with a non-arithmetic hyperbolic Coxeter $5$-simplex. However, $S$ is the only hyperbolic non-arithmetic Coxeter $5$-simplex \cite{JKTR}, therefore $\Lambda = \Gamma$. Then, it follows from \cite{Vin67} that $\Gamma' = \Gamma \rtimes \langle \tau \rangle$ is a maximal lattice.
\end{proof}

Notice that the discussion above implies that there is a splitting short exact sequence:
$$1 \rightarrow \Gamma \rightarrow \Gamma' \rightarrow \mathbb{Z}/2\mathbb{Z} \rightarrow 1,$$
therefore $\Gamma'$ decomposes as a semidirect product $\Gamma \rtimes \mathbb{Z}/2\mathbb{Z}$, where $\mathbb{Z}/2\mathbb{Z}$ is generated by the isometric involution $\tau$ and acts by conjugation on $\Gamma$ through the permutation $\tau$ of the generators.

Now, consider any codimension $k$ face $F$ of the simplex $S$ which is not an ideal vertex. Since $S$ is a simple polytope, $F$ will lie at the intersection of $k$ facets $F_1,\dots,F_k$ of $S$. The subgroup $G_F$ of $\Gamma$ generated by the reflections in these facets is finite and $\Fix(F)$ is precisely the totally geodesic subspace $H$ of $\mathbb{H}^5$ which supports the face $F$.  By Theorem \ref{theorem:cent}, the stabiliser of $H$ in $\Gamma$ acts as a lattice on $H$, and therefore defines a totally geodesic sublattice of $\Gamma$. 

By applying Theorem \ref{theorem:cent}, it is easy to see that there is another totally geodesic sublattice in $\Gamma$ which corresponds to the fixed point set of the involution $\tau$. Let us prove the following fact.
\begin{prop}
The fixed point set of the involution $\tau$ is a hyperbolic plane $H \cong \mathbb{H}^2 \subset \mathbb{H}^5$. The stabiliser of $H$ in $\Gamma$ acts on $H$ as the arithmetic $(2,4,8)$ triangle reflection group. 
\end{prop}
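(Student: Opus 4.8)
The plan is to treat the two assertions in turn. Work in the hyperboloid model $\mathbb{H}^5\subset\R^{5,1}$ and let $e_a,\dots,e_f$ be the outer unit normals to the six facets of $S$; since $S$ is a simplex these form a basis of $\R^{5,1}$, and the isometric involution $\tau$ acts linearly on $\R^{5,1}$ as the basis permutation $(e_a\,e_b)(e_c\,e_d)(e_e\,e_f)$. The $(+1)$-eigenspace of $\tau$ is then $V_+=\langle e_a+e_b,\,e_c+e_d,\,e_e+e_f\rangle$, which is $3$-dimensional, and its orthogonal complement $V_-$ is the $(-1)$-eigenspace. Being a finite-order isometry of $\mathbb{H}^5$, $\tau$ fixes a point $p\in\mathbb{H}^5$ — for instance the midpoint of $x$ and $\tau(x)$ for any $x$ — so that $p\in V_+$ with $(p,p)=-1<0$ and the Minkowski form restricted to $V_+$ has signature $(2,1)$. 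Hence $H:=\Fix(\tau)=\mathbb{H}^5\cap V_+$ is a totally geodesic copy of $\mathbb{H}^2$, which is the first claim.

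For the second claim, the key geometric point is that $S\cap H$ is a fundamental domain for the action on $H$ of $\Stab_{\Gamma'}(H)$, where $\Gamma'=\Gamma\rtimes\langle\tau\rangle$ is the maximal lattice of Proposition~\ref{proposition:commensurator-5-simplex}. Indeed, since $\langle\tau\rangle$ is the full symmetry group of $S$, any $\Gamma$-tile $\gamma S$ whose interior meets $H$ must satisfy $\tau(\gamma S)=\gamma S$, hence $\gamma\in C_{\Gamma'}(\tau)=\Stab_{\Gamma'}(H)$; so the tiles crossing $H$ are exactly the $\Stab_{\Gamma'}(H)$-translates of $S$, and their traces on $H$ are the $\Stab_{\Gamma'}(H)$-translates of $S\cap H$. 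Because the facets of $S$ are interchanged in pairs by $\tau$, which fixes $H$ pointwise, the intersections with $H$ of the two facets in a pair coincide, so $S\cap H$ has at most three sides; the side coming from the pair $\{a,b\}$ lies on the geodesic $e_a^{\perp}\cap H=(e_a+e_b)^{\perp}\cap H$, since $\tfrac12(e_a+e_b)$ is the orthogonal projection of $e_a$ onto $V_+$. One checks from Figure~\ref{fig:5-simplex} that the three labels of $S$ permuted by $\tau$ are odd; then the reflection of $H$ across that geodesic is realised already inside $\Gamma$ by $r_1=(r_ar_b)^{(m_{ab}-1)/2}r_a$ (whose root $e_a+e_b$ lies in $V_+$), and similarly one obtains $r_2,r_3$ from the other two pairs. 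These three reflections pair the sides of the triangle $S\cap H$, and by Poincar\'e's theorem $\Stab_{\Gamma'}(H)$ acts on $H$ as the reflection group generated by $r_1,r_2,r_3$. It then remains to compute the three angles of $S\cap H$ — i.e. the normalised inner products of $e_a+e_b,\,e_c+e_d,\,e_e+e_f$ with respect to the form on $V_+$, simplified via the $\tau$-symmetries $(e_a,e_c)=(e_b,e_d)$ and $(e_a,e_d)=(e_b,e_c)$ — and to check that they equal $\tfrac{\pi}{2}$, $\tfrac{\pi}{4}$, $\tfrac{\pi}{8}$, whence $\Stab_{\Gamma'}(H)|_H$ is the $(2,4,8)$-triangle reflection group $\Delta$.

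Finally, since $r_1,r_2,r_3\in\Gamma$, the image of $\Stab_\Gamma(H)$ in $\Isom(\mathbb{H}^2)$ contains $\Delta$ and is contained in the image of $\Stab_{\Gamma'}(H)$, which is $\Delta$; hence $\Stab_\Gamma(H)$ acts on $H$ as the $(2,4,8)$-triangle reflection group, as asserted. The main obstacle I foresee is making rigorous the claim that $S\cap H$ is a genuine fundamental domain for $\Stab_{\Gamma'}(H)|_H$ — the tiling argument above, which hinges on $\langle\tau\rangle$ being exactly the symmetry group of $S$ and on controlling the isometries that fix $H$ pointwise — together with the explicit trigonometry, from the Gram matrix of Figure~\ref{fig:5-simplex}, that identifies the triangle as $(2,4,8)$ and confirms that the relevant three labels are odd (so that the side-reflections indeed lie in $\Gamma$). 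Everything else is formal.
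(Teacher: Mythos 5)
Your first paragraph is correct, and is essentially the paper's argument in dual form: the paper diagonalises $\tau$ on the vertices (the projectivised positive orthant of $\mathbb{R}^6$), you diagonalise it on the facet normals; either way one gets $\Fix(\tau)=V_+\cap\mathbb{H}^5\cong\mathbb{H}^2$ and that $T=S\cap H$ is a triangle with sides on $a\cap b$, $c\cap d$, $e\cap f$. The genuine gap is in the second step. The claim that the three labels joining the $\tau$-paired facets are all odd is false: in the simplex of Figure~\ref{fig:5-simplex} the pairs $\{a,b\}$, $\{c,d\}$, $\{e,f\}$ carry the labels $4$, $2$ and $3$ respectively (this is forced, for instance, by the requirement that the side-pairing elements commute with $\tau$: $\tau(abab)\tau=(ab)^{-2}$ equals $(ab)^2$ only if $m_{ab}\mid 4$, and $efe=fef$ only if $m_{ef}=3$), and it is exactly why the paper's generators of the centraliser are $r_1=abab=(ab)^2$, $r_2=cd$ and $r_3=efe$. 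Only for the odd pair $\{e,f\}$ does $\Gamma$ contain a hyperplane reflection of $\mathbb{H}^5$ with root proportional to $e_e+e_f$; for an even label no reflection of the dihedral subgroup has root $e_a+e_b$, your word $(r_ar_b)^{(m_{ab}-1)/2}r_a$ is not even defined (non-integral exponent), and the element of $\Gamma$ inducing the reflection of $H$ in the side coming from $\{a,b\}$ (resp.\ $\{c,d\}$) is instead the rotation by $\pi$ about the codimension-two face, namely $(ab)^{m_{ab}/2}=abab$ (resp.\ $cd$). As written, two of your three side-pairing generators do not exist in $\Gamma$, so the Poincar\'e step has nothing to act on; the repair is to use $abab$, $cd$, $efe$, which commute with $\tau$, stabilise $H$, and restrict to the three side reflections.

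Apart from this, your framework (the $C_{\Gamma'}(\tau)$-translates of $S$ are exactly the tiles meeting $H$ in interior points, so $T$ is a fundamental triangle for the action on $H$, and the last reduction from $\Stab_{\Gamma'}(H)$ to $\Stab_{\Gamma}(H)$ since the corrected $r_i$ lie in $\Gamma$) is sound and a slightly more geometric route than the paper's, which instead produces the centraliser generators directly and computes orders of their pairwise products inside the finite Coxeter subgroups fixing the vertices of $T$. But note that the part you defer to the end --- the parity check (which in fact fails as stated) and the explicit verification that the angles of $T$ are $\pi/2$, $\pi/4$, $\pi/8$, equivalently that $r_1r_3$, $r_2r_3$, $r_1r_2$ have orders $2$, $4$, $8$ --- is precisely where the specific conclusion ``$(2,4,8)$'' is established; until that computation is carried out with the corrected generators, the second assertion of the proposition is not proved.
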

\begin{proof}
Realise the simplex $S$ combinatorially as the projectivisation of the positive orthant in $\mathbb{R}^6$. Up to an appropriate identification of the vertices of $S$ with the vectors of the standard basis of $\mathbb{R}^6$, we can suppose that the involution $\tau$ is realised by the following permutation of the vectors of the standard basis: $(e_1,e_2)(e_3,e_4)(e_5,e_6)$.

The corresponding matrix has eigenvalue $1$ with multiplicity $3$, and the corresponding $3$-dimensional eigenspace intersects the positive orthant in the subset $$\{x_1=x_2,\, x_3=x_4,\, x_5=x_6,\, x_i \ge 0\}.$$

By projecting onto the hyperboloid, we see that the fixed points set of the involution $\tau$ in $S$ is a hyperbolic triangle $T$ whose sides $s_1$, $s_2$, $s_3$ lie respectively in $a \cap b$, $c \cap d$ and  $e \cap f$. We denote by $H$ the fixed points set in $\mathbb{H}^5$ of the involution $\tau$: this will be a hyperbolic plane tessellated by copies of $T$.

The centraliser $C(\tau)$ of the involution $\tau$ in $\Gamma'$ coincides with the stabiliser of $H$, and can easily be seen to be generated by $\tau$ together with $r_1 = abab$, $r_2 = cd$ and $r_3 = efe$, where each  $r_i$ acts on $H$ as a reflection in the side $s_i$ of $T$, and $\tau$ acts as the identity. In order to describe both the geometry of $T$ and the action of $C(\tau)$ on $H$ it is sufficient to find the order of the products of two reflections in the sides of $T$. An easy computation with finite Coxeter groups shows that the orders of $r_1\cdot r_2$, $r_2\cdot r_3$, and $r_1 \cdot r_3$ are $8$, $4$, and $2$, respectively. Thus, the stabiliser of $H$ is the $(2,4,8)$-triangle group, which is arithmetic by \cite{Takeuchi}.
\end{proof}


\begin{thebibliography}{0}

\bibitem{Ag06}
I.~Agol, {\it Systoles of hyperbolic 4-manifolds}, arXiv preprint math/0612290, (2006).

\bibitem{AMN} D. Alekseevski, P.~Michor, Y.~Neretin, {\itshape Rolling of Coxeter polyhedra along mirrors}, Geometric Methods in Physics, XXXI Workshop, eds. Kielanowski et. al, Birkha\"user (2012), pp. 67--86.

\bibitem{Al13} D.~Allcock, {\itshape Reflection centralizers in Coxeter groups}, Transformation Groups {\bf 18}, no. 3 (2013), pp. 599--613.

\bibitem{BFMS20}
U.~Bader, D.~Fisher, N.~Miller, M.~Stover, {  \itshape Arithmeticity, superrigidity, and totally geodesic submanifolds}, Ann. of Math. (2), 2021, {\bf 193}, no. 3, pp. 837--861.

\bibitem{BFMS23}
U. Bader, D. Fisher, N. Miller, and M. Stover, {\itshape Arithmeticity, superrigidity and totally geodesic submanifolds of complex hyperbolic manifolds}, Invent. Math., {\bf 233} (2023), pp. 169--222.

\bibitem{BU23} G.~Baldi, E.~Ullmo, {\itshape Special subvarieties of non-arithmetic ball quotients and Hodge theory}, Ann. of Math. (2), {\bf 197} (2023), pp. 159--220.

\bibitem{BT11}
M.~Belolipetsky, S.~A.~Thomson, {\itshape Systoles of hyperbolic manifolds}, Algebr. Geom. Topol. {\bf 11} (2011), pp. 1455--1469. 


\bibitem{BC-Asterisque}
N.~Bergeron, L.~Clozel. 
{\itshape Spectre automorphe des variétés hyperboliques et applications topologiques},
Asterisque, tome 303 (2005), pp. 1--218.

\bibitem{BC13} N.~Bergeron, L.~Clozel. 
{\itshape Quelques cons\'equneces des travaux d'Arthur pour le spectre et la topologie des vari\'et\'es hyperboliques},
Invent. Math. {\bf 192} (2013), pp. 505--532.

\bibitem{BC17}
N.~Bergeron, L.~Clozel. 
{\itshape Sur la cohomologie des variétés hyperboliques de dimension 7 trialitaires} (French. French summary) [On the cohomology of trialitary hyperbolic manifolds of dimension 7],
Israel J. Math. {\bf 222} (2017), no. 1, pp. 333--400.

\bibitem{BG-rigidity} N.~Bergeron, T.~Gelander. {\itshape A note on local rigidity}, Geom. Dedicata {\bf 107} (2004), pp. 111--131.

\bibitem{BHW11}
N.~Bergeron, F.~Haglund, and D.\,T.~Wise, {\it Hyperplane sections in arithmetic hyperbolic manifolds}, J. Lond. Math. Soc. (2), {\bf 83} (2011), pp. 431--448.

\bibitem{BK20}
N.~V.~Bogachev, A.~Kolpakov, {\itshape On faces of quasi-arithmetic Coxeter polytopes}, Int. Math. Res. Notices, Vol. 2021, Issue {\bf 4} (2021), pp. 3078--3096.

\bibitem{Borel}
A.~Borel, {\itshape Linear algebraic groups}, 2nd ed., Graduate Texts in Mathematics, vol. {\bf 126}, Springer-Verlag, New York, 1991.

\bibitem{Borel-adjoint} A.~Borel, {\itshape Density and maximality of arithmetic subgroups}, J. Reine Angew. Math. {\bf 224} (1966), pp. 78--89.

\bibitem{Borel-density} A.~Borel, {\itshape Density properties for certain subgroups of semisimple groups without compact components}, Ann. of Math. (2) {\bf 72} (1960), pp. 179–-188.

\bibitem{BHC62} 
A.~Borel and Harish--Chandra, {\itshape Arithmetic subgroups of algebraic groups}, Ann. Math. {\bf 75} (1962), pp. 485--535.

\bibitem{Borel-Serre} A.~Borel, J.-P.~Serre, {\itshape Th\'eor\`emes de finitude en cohomologie galoisienne}, Comment. Math. Helv. {\bf 39} (1964), pp. 119--171.

\bibitem{CLR} D.~Cooper, D.~Long. A.~W.~Reid, {\itshape On the virtual Betti number of arithmetic hyperbolic $3$-manifolds}, Geom. Topol. {\bf 11}(4) (2007), pp. 2265--2276.


\bibitem{Cor92} K.~Corlette, {\itshape Archimedean superrigidity and hyperbolic geometry}, Ann. of Math. (2) {\bf 135}:1 (1992), pp. 165--182.

\bibitem{D23} M.~Deraux, {\itshape Mirror stabilizers for lattice complex hyperbolic triangle groups}, {\texttt arXiv:2301.07387 [math.GT]}.

\bibitem{Emery}
V.~Emery, {\itshape Du volume des quotients arithm\'{e}tiques de l’espace hyperbolique}, PhD Thesis, University of Fribourg (2009)

\bibitem{EM20}
V.~Emery, O.~Mila, {\itshape Hyperbolic manifolds and pseudo-arithmeticity}, Trans. Amer. Math. Soc. Ser. B {\bf 8} (2021), pp. 277--295.

\bibitem{ERT}
V.~Emery, J.~G.~Ratcliffe, S.~T.~Tschantz, {\itshape Salem numbers and arithmetic hyperbolic groups}, Trans. Amer. Math. Soc. {\bf 372} (2019), pp. 329--355.

\bibitem{EG18}
H.~Esnault, M.~Groechenig, {\it Cohomologically rigid local systems and integrality}, Selecta Math. (N.S.), {\bf 24} (2018), pp. 4279--4292.

\bibitem{FT10} A.~Felikson, P.~Tumarkin, {\itshape Reflection subgroups of Coxeter groups}, Trans. Amer. Math. Soc. {\bf 362} (2010), pp. 847--858.

\bibitem{Fenchel} W.~Fenchel, {\itshape Elementary geometry in hyperbolic space}, De Gruyter Studies in Mathematics {\bf 11} (1989).


\bibitem{FLMS18}
D.~Fisher, J.F.~Lafont, N.~Miller, M.~Stover, {\itshape Finiteness of maximal totally geodesic submanifolds in hyperbolic hybrids},  J. Eur. Math. Soc. {\bf 23} (2021), no. 11, 3591--3623, \texttt{arXiv:1802.04619}.

\bibitem{FLMS-erratum}
D.~Fisher, J.F.~Lafont, N.~Miller, M.~Stover, {\itshape Corrigendum to ``Finiteness of maximal totally geodesic submanifolds in hyperbolic hybrids''},  J. Eur. Math. Soc. (2025), DOI: 10.4171/JEMS/1711.

\bibitem{Garibaldi} R.~S.~Garibaldi, {\itshape Isotropic trialitarian algebraic groups}, Journal of Algebra, {\bf 210} (1998), pp. 385--418.


\bibitem{God62}
R.~Godement, {\itshape Domaines fondamentaux des groupes arithmetiques}, Semin.
Bourbaki 1962--1963, 1964, {\bf 15}, no. 3, exp. 257, pp. 119--131.

\bibitem{GPS87}
M.~Gromov, I.~Piatetski-Shapiro, {\itshape Non--arithmetic groups in Lobachevsky spaces}, Inst. Hautes \'Etudes Sci. Publ. Math. {\bf 66} (1987), pp. 93--103.


\bibitem{GS92}
M.~Gromov, R.~Schoen, {\itshape Harmonic maps into singular spaces and $p$-adic superrigidity for lattices in groups of rank one}, Inst. Hautes \'{E}tudes Sci. Publ. Math.. {\bf 76} (1992), pp. 165--246.


\bibitem{Humphreys} J.~E.~Humphreys, \textit{Introduction to Lie algebras and representation theory}, Graduate Texts in Mathematics \textbf{9} (1972), 173 pp.


\bibitem{JKTR} N.~Johnson, R.~Kellerhals, J.~Ratcliffe, S.~Tschantz, {\itshape The size of a hyperbolic Coxeter simplex}, Transformation Groups {\bf 4} (1999), pp. 329--353.

\bibitem{INV} M.~A.~Knus, A.~Merkurjev, M.~Rost, J.~P.~Tignol, {\itshape The book of involutions}, Colloqium publications {\bf 44}, Amer. Math. Soc., (1998).

\bibitem{KRS} A.~Kolpakov, A.~W.~Reid, L.~Slavich, {\itshape Embedding arithmetic hyperbolic manifolds}, Math.~Res.~Lett. \textbf{25} (2018), pp. 1305--1328.

\bibitem{KRiS} A.~Kolpakov, S.~Riolo, L.~Slavich, {\itshape Embedding non-arithmetic hyperbolic manifolds}, Math.~Res.~Lett. \textbf{29} No.1 (2022), pp. 247--274. \texttt{arXiv:2003.01707}. 

\bibitem{LLR08} M.~Lackenby, D.~Long, A.~W.~Reid, 
{\itshape Covering spaces of arithmetic 3-orbifolds},
Int. Math. Res. Not. IMRN, Volume 2008, Art. ID rnn036, 38 pp.

\bibitem{LP20} K.~Le, R.~Palmer, {\itshape Totally geodesic surfaces in twist knot complements},
Pacific J. Math., {\bf Vol. 319}, No. 1 (2022), pp. 153--179, DOI: 10.2140/pjm.2022.319.153, \texttt{arXiv:2009.04637}.

\bibitem{Li-Millson} J.~S.~Li, J.~J.~Millson, {\itshape On the first Betti number of a hyperbolic manifold with an arithmetic fundamental group}, Duke Math. J. {\bf 71}(2) (1993), pp. 365--401.

\bibitem{Mac-Reid} C.~Maclachlan, A.~W.~Reid, {\itshape The arithmetic of hyperbolic $3$-manifolds}, Graduate Text in Math. {\bf 219}, Springer--Verlag (2003).

\bibitem{Margulis77} G.~A.~Margulis, {\itshape Discrete groups of motions of manifolds of non--positive curvature}, Amer. Math. Soc. Transl. {\bf 109} (1977), pp. 33--45.

\bibitem{Mar84} G.~A.~Margulis, {\itshape Arithmeticity of the irreducible lattices
in the semi-simple groups of rank greater than 1}, Invent. Math. (1984), {\bf 76}, pp. 93--120.

\bibitem{Margulis-book}
G.~A.~Margulis, {\itshape Discrete Subgroups of Semisimple Lie Groups}, Band 17 von 3. Folge, Ergebnisse der Mathematik und ihrer Grenzgebiete. Springer: Berlin -- Heidelberg (1991), 388 pp. 

\bibitem{MM20}
G.~A.~Margulis, A.~Mohammadi, {\itshape Arithmeticity of hyperbolic 3-manifolds containing infinitely many totally geodesic surfaces}, Ergod. Th. \& Dynam. Systems, {\bf 42} (2022), no. 3, 1188--1219.

\bibitem{Mey17}
J.~S.~Meyer, {\itshape Totally geodesic spectra of arithmetic hyperbolic spaces}, Trans. Amer. Math. Soc. {\bf 369} (2017), no. 11, pp. 7549--7588.


\bibitem{WM} D.~W.~Morris, {\itshape Introduction to arithmetic groups}, Deductive Press (2015), 492 pp.

\bibitem{OV}
A.~L.~Onishchik, \`{E}.~B.~Vinberg, (Eds.), {\itshape Lie groups and Lie algebras II}, Encyclopaedia of Mathematical Sciences {\bf 21}, Springer: Berlin -- Heidelberg (2010), p. 224. 

\bibitem{Pierce}
R.~S.~Pierce, {\itshape Associative algebras}, Graduate Texts in Mathematics {\bf 88}, Springer: New York (1982), 436 pp.

\bibitem{Pla-Rap} V.~Platonov, A.~Rapinchuk, \textit{Algebraic groups and Number Theory}, Academic Press (1993), 614 pp.


\bibitem{PR}
G.~Prasad, A.~Rapinchuk. {\itshape Weakly commensurable arithmetic groups and isospectral locally symmetric spaces}, Publ. Math. IH\'{E}S, {\bf 109} (2009), pp. 113--184.

\bibitem{Raghunathan}
M.~S.~Raghunathan, {\itshape Discrete subgroups of Lie groups}, Berlin--Heidelberg, Springer (1972), 227 pp.

\bibitem{Reid} A.~W.~Reid, {\itshape Arithmeticity of knot complements}, J. London Math. Soc.  {\bf s2--43} Issue 1 (1991), pp. 171--184.

\bibitem{RW} A.~W.~Reid, G.~S.~Walsh, {\itshape Commensurability classes of $2$-bridge knot complements}, Algebr. Geom. Topol. {\bf 8} (2008), no. 2, pp. 1031--1057.

\bibitem{Sam14} I.~Samet, {\itshape On the number of finite subgroups of a lattice}, 
Comment. Math. Helv. {\bf 89} (2014), no. 3, pp. 759--781.

\bibitem{Schwerner} J.~Schwermer, {\itshape Geometric cycles, arithmetic groups and their cohomology}, Bull. Amer. Math. Soc. {\bf 47} (2010), pp. 187--279.

\bibitem{Serre}
J-P.~Serre, {\itshape A course in Arithmetic}, Graduate Texts in Math. {\bf 7}, Springer-Verlag (1973). 

\bibitem{Serre-Gal-cohomology}
J-P.~Serre, {\itshape Galois cohomology}, Springer-Verlag, Berlin (1997).

\bibitem{Takeuchi} K.~Takeuchi, {\itshape Arithmetic triangle groups}, J. Math. Soc. Japan {\bf 29}(1) (1977), pp. 99--106.

\bibitem{Tho16}
S.~Thomson, {\it Quasi-arithmeticity of lattices in $\PO(n, 1)$}, Geom. Dedicata, {\bf 180} (2016), pp. 85--94

\bibitem{Tits66}
J.~Tits, {\itshape Classification of algebraic semisimple groups}, Algebraic Groups and Discontinuous Subgroups (Proc. Sympos. Pure Math., Boulder, Colo., 1965), Amer. Math. Soc., Providence, RI, 1966, pp. 33--62.

\bibitem{Vin67}
\`E.~B.~Vinberg, {\itshape Discrete groups generated by reflections in Lobachevski spaces}, Math. Sb. {\bf 72} (1967), pp. 471--488.

\bibitem{Vin71}
\`E.~B.~Vinberg, {\itshape Rings of definition of dense subgroups of semisimple linear groups}, Math. of USSR--Izvestiya (1971), {\bf 5} (1), pp. 45--55.

\bibitem{Vin-Geometry-II}
\`E.~B.~Vinberg, (Ed.), {\itshape Geometry II}, Encyclopaedia of Mathematical Sciences {\bf 29}. Springer-Verlag, Berlin, 1993.

\bibitem{Vin95} \`E.~B.~Vinberg, {\itshape The smallest field of definition of a subgroup of the group $\mathrm{PSL}_2$},  Mat. Sb., {\bf184}:10 (1993), pp. 53–-66; Russian Acad. Sci. Sb. Math., {\bf80}:1 (1995), pp. 179--190.

\bibitem{Vin14} \`E.~B.~Vinberg, {\itshape Non--arithmetic hyperbolic reflection groups in higher dimensions},  Moscow Math. J. {\bf 15} (2015), pp. 593--602.

\end{thebibliography}
\end{document}